\documentclass[oneside,11pt]{amsart}

\usepackage{float}
\usepackage{amscd,amssymb}
\usepackage[mathscr]{eucal}
\usepackage[all]{xy}
\usepackage{mathrsfs}
\usepackage{amsfonts}
\usepackage{amsmath}
\usepackage{amsthm}
\usepackage{latexsym}
\usepackage[all]{xy}
\usepackage{phonetic}
\usepackage{todonotes}

\linespread{1.1}

\title{The Frobenius morphism on flag varieties, II}

\author{Alexander Samokhin}

\address{Math. Institut, Heinrich-Heine-Universit\"at, D-40204 D\"usseldorf, Germany}
\address{\it and}
\address{Institute for Information Transmission Problems, Moscow,  Russia}
\email{alexander.samokhin@gmail.com}

\openup1\jot \setlength{\topmargin}{0.1\topmargin}
\setlength{\oddsidemargin}{0.1\oddsidemargin}
\setlength{\evensidemargin}{0.1\oddsidemargin}
\setlength{\textheight}{1.1\textheight}
\setlength{\textwidth}{1.3\textwidth}

\newcommand{\Oo}{\mathcal O}
\newcommand{\Uu}{\mathcal U}

\newcommand{\Pp}{\mathbb P}

\newcommand{\Ff}{\mathcal F}
\newcommand{\G}{\mathcal G}
\newcommand{\Hh}{\mathcal H}
\newcommand{\Ee}{\mathcal E}

\newcommand{\Ll}{\mathcal L}

\newcommand{\D}{\mathcal D}
\newcommand{\C}{\mathcal C}
\newcommand{\T}{\mathcal T}
\newcommand{\Kk}{\mathcal K}
\newcommand{\A}{\mathcal A}
\newcommand{\B}{\mathcal B}

\newcommand*{\RHom}{\mathop{\mathrm RHom}\nolimits}
\newcommand*{\Hom}{\mathop{\mathrm Hom}\nolimits}
\newcommand*{\Dd}{\mathop{\mathrm D\kern0pt}\nolimits}
\newcommand*{\DD}{\mathop{\mathbb D\kern0pt}\nolimits}

\newcommand*{\Ext}{\mathop{\mathrm Ext}\nolimits}

\newtheorem{theorem}{Theorem}[section]
\newtheorem{corollary}{Corollary}[section]
\newtheorem{lemma}{Lemma}[section]
\newtheorem{proposition}{Proposition}[section]
\newtheorem{claim}{Claim}[section]
\newtheorem{conjecture}{Conjecture}[section]
\newtheorem{remark}{Remark}[section]
\newtheorem{definition}{Definition}[section]	

\numberwithin{equation}{section}

\long\def\comment#1{}

\begin{document}
	
\maketitle

\begin{abstract}
In this paper, which is the sequel to \cite{FrobflagvarI}, we study the Frobenius pushforward of the structure sheaf on the adjoint varieties in type ${\bf A}_3$ and ${\bf A}_4$. We show that this pushforward sheaf decomposes into a direct sum of indecomposable bundles and explicitly determine this set that does not depend of the characteristic. In accordance with the results of \cite{Vanth}, this set forms a strong full exceptional collection in the derived category of coherent sheaves. These computations lead to a natural conjectural answer in the general case that we state at the end.
\end{abstract}

\vspace*{0.5cm}

\section{Introduction}

\vspace*{0.5cm}
Let ${\sf V}$ be a vector space of dimension $n$ over an algebraically closed field $\sf k$ of characteristic $p$, and let $X_n$ denote the partial flag variety ${\rm F}_{1,n-1,{\sf V}}$ of type $(1,n-1)$. Recall that given a variety $X$ over $\sf k$, the sheaf of small differential operators $\D _X^{(1)}$ on $X$ is a coherent sheaf of algebras isomorphic to ${\mathcal End}_{\Oo _X}({\sf F}_{\ast}\Oo _X)$, where ${\sf F}$ is the Frobenius morphism.
It was shown in \cite{Vanth} that ${\rm H}^i(X_n,{\mathcal D}^{(1)}_{X_n})=0$ for $i>0$ in all characteristics. Coupled with the results of \cite{BMR}, this vanishing theorem gives that for $p>n$ the bundle ${\sf F}_{\ast}\Oo _{X_n}$ is {\it tilting} on ${X_n}$.

The proof in \cite{Vanth} was rather implicit, however, as it deduced the higher cohomology vanishing of the sheaf ${\mathcal D}^{(1)}_{X_n}$ from
the general properties of sheaves of crystalline differential operators that 
turn out to be particularly nice on the varieties ${X_n}$. The argument presented in {\it loc.cit.} also covers the case of smooth quadrics for odd primes; on the other hand, in the latter case there is an explicit description of the decomposition of the Frobenius pushforward of a line bundle in arbitrary characteristic (see \cite{Ach} and \cite{Lan}). The tilting property for appropriate primes follows as well from those decompositions. 

The goal of the present paper is to explicitly compute the summands appearing in ${\sf F}_{\ast}\Oo _{X_n}$. We use the approach developed in the previous paper \cite{FrobflagvarI}. To make the present paper self--contained, we include the preliminary material (without proofs) from {\it loc.cit.} that occupies Sections \ref{sec:Cohomology_on_G/B} -- \ref{sec:res_of_Delta}. The reader familiar with these notions may skip directly to Section \ref{sec:adjoint_var_rank3}. In particular, we provide a detailed argument allowing to compute the indecomposable summands of ${\sf F}_{\ast}\Oo _{X_n}$ in the case of small ranks $n=4$ (Theorem \ref{th:F-theorem_X_4}) and $n=5$ (Theorem \ref{th:F-theorem_X_5}). In consistency with \cite{Vanth}, these decompositions show that ${\sf F}_{\ast}\Oo _{X_n}$ is a tilting bundle on $X_n$ for $n=4,5$ and $p>n$. The approach to construct the decomposition of ${\sf F}_{\ast}\Oo _{X_n}$ in these low rank cases is, in fact, uniform; it was to a large extent inspired by the seminal paper \cite{Kap}. In the final Section \ref{sec:adjoint_var} we give a conjectural description of the decomposition of ${\sf F}_{\ast}\Oo _{X_n}$ in the general case. 

\subsection*{Notation}
Throughout we fix a perfect field $k$ of characteristic $p>0$.
Given a split semisimple simply connected algebraic group $\bf G$ over $k$, let $\bf T$ denote a maximal torus of $\bf G$, and let ${\bf T}\subset {\bf B}$ be a Borel subgroup containing $\bf T$. The flag variety of Borel subgroups in $\bf G$ is denoted ${\bf G/B}$.
Denote ${\rm X}({\bf T})$ the weight lattice, and let $\rm R$ and  $\rm R ^{\vee}$ denote the root and coroot lattices, respectively. Let $\rm S$ be the set of simple roots relative to the choice of a Borel subgroup than contains $\bf T$. 
The Weyl group ${\mathcal W}={\rm N}({\bf T})/{\bf T}$ acts on $X({\bf T})$ via the dot--action: if $w\in {\mathcal W}$, and 
$\lambda \in {\rm X}({\bf T})$, then $w\cdot \lambda = w(\lambda + \rho) - \rho$. A parabolic subgroup of $\bf G$ is denoted by $\bf P$. For a simple root $\alpha \in {\rm S}$, denote ${\bf P}_{\alpha}\subset \bf G$ the corresponding minimal parabolic subgroup.  Given a weight $\lambda \in  {\rm X}({\bf T})$, denote $\Ll _{\lambda}$ the corresponding line bundle on ${\bf G}/{\bf B}$.  The half sum of the positive roots (the sum of fundamental weights) is denoted by $\rho$. Given a dominant 
weight $\lambda \in  {\rm X}({\bf T})$, the induced module ${\rm Ind}_{\bf B}^{\bf G}\lambda$ is denoted $\nabla _{\lambda}$, the Weyl module, which is dual to induced module, is denoted $\Delta _{\lambda}$, and 
the simple module with the highest weight $\lambda$ is denoted ${\sf L}_{\lambda}$. Given a variety $X$ and $n\in \mathbb N$, denote ${\sf F}_n$ the $n$--th iteration of the absolute  Frobenius morphism ${\sf F}_n: X\rightarrow X$. For a vector space ${\sf V}$ over $k$ its $n$-th Frobenius twist ${\sf F}_n^{\ast}{\sf V}$ is denoted ${\sf V}^{[n]}$. All the functors are supposed to be derived, i.e., given a morphism $f:X\rightarrow Y$ between two schemes, we write $f_{\ast},f^{\ast}$ for the corresponding derived functors of push--forwards and pull--backs.

\subsection*{\bf Acknowledgements}
The first drafts of this paper have been written in 2012--13 while the author benefited from generous supports of the SFB/Transregio 45 at the University of Mainz, and of the SFB 878 at the University of M\"unster. It is a great pleasure to thank Manuel Blickle and Christopher Deninger for their invitations. We would also like to thank Michel Van den Bergh for his interest in this work and for the inspiring paper \cite{VdBFrob} that gave the author an incentive to reanimate this project, and the organizers of the
conference "Triangulated categories and geometry -- in honour of A.Neeman" in Bielefeld in May, 2017 at which this paper took its present shape. The author gratefully acknowledges support from the strategic research fund of the Heinrich-Heine-Universit\"at D\"usseldorf (grant SFF F-2015/946-8).

\vspace*{0.5cm}

\section{Cohomology of line bundles on ${\bf G}/{\bf B}$}\label{sec:Cohomology_on_G/B}

\vspace*{0.5cm}

\subsection{\bf Flag varieties of Chevalley groups over ${\mathbb Z}$}

Let ${\mathbb G}\rightarrow \mathbb Z$ be a semisimple Chevalley group scheme (a smooth affine group scheme over ${\rm Spec}(\mathbb Z)$ whose geometric fibres are connected semisimple algebraic groups), and ${\mathbb G}/{\mathbb B}\rightarrow \mathbb Z$ be the corresponding Chevalley flag scheme (resp., ${\mathbb P}\subset \mathbb G$ the corresponding parabolic subgroup scheme over ${\mathbb Z}$). 
Then ${\mathbb G/\mathbb P}\rightarrow {\rm Spec}({\mathbb Z})$ is flat and the line bundle $\Ll$ on ${\bf G/P}$ also comes from a line bundle $\mathbb L$ on ${\mathbb G/\mathbb P}$. Let $k$ be a field of arbitrary characteristic, and ${\bf G/B}\rightarrow {\rm Spec}(k)$ be the flag variety obtained by base change along ${\rm Spec}(k)\rightarrow {\rm Spec}(\mathbb Z)$.

\subsection{\bf Bott's vanishing theorem}\label{subsec:cohlinbunflags}

We recall first the classical Bott's theorem (see \cite{Dem}). Let ${\mathbb G}\rightarrow \mathbb Z$ be a semisimple Chevalley group scheme as above. Assume given a weight $\chi \in X({\mathbb T})$, and let $\Ll _{\chi}$ be the corresponding line bundle on ${\mathbb G}/{\mathbb B}$. The weight $\chi$ is called {\it singular}, if it lies on a wall of some Weyl chamber defined by $\langle -, \alpha ^{\vee}\rangle =0$ for some coroot $\alpha ^{\vee}\in {\rm R}^{\vee}$. Weights, which are not singular, are called {\it regular}. Let $k$ be a field of characteristic zero, and ${\bf G}/{\bf B}\rightarrow {\rm Spec}(k)$ the corresponding flag variety over $k$. The weight $\chi \in X({\bf T})$ defines a line bundle $\Ll _{\chi}$ on 
${\bf G}/{\bf B}$.

\begin{theorem}\cite[Theorem 2]{Dem}\label{th:Bott-Demazure_th}

\vspace*{0.2cm}

\begin{itemize}

\vspace*{0.2cm}

\item[(a)] If $\chi +\rho$ is singular, then ${\rm H}^i({\bf G}/{\bf B},\Ll _{\chi})= 0$ for all $i$.

\vspace*{0.2cm}

\item[(b)] If If $\chi + \rho$  is regular and dominant, then ${\rm H}^i({\bf G}/{\bf B},\Ll _{\chi}) = 0$ for $i>0$.

\vspace*{0.2cm}

\item[(c)]  If $\chi + \rho$  is regular, then ${\rm H}^i({\bf G}/{\bf B},\Ll _{\chi})\neq 0$ for the unique degree $i$, which is equal to $l(w)$. Here $l(w)$ is the length of an element of the Weyl group that takes $\chi$ to the dominant chamber, i.e. $w\cdot \chi \in X_{+}({\bf T})$. The cohomology group ${\rm H}^{l(w)}({\bf G}/{\bf B},\Ll _{\chi})$ is the irreducible $\bf G$--module of highest weight $w\cdot \chi$.

\end{itemize}

\end{theorem}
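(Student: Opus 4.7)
The plan is to use Demazure's inductive reduction via the natural $\mathbb{P}^1$-bundles $\pi_\alpha\colon {\bf G}/{\bf B}\to {\bf G}/{\bf P}_\alpha$ attached to the simple roots $\alpha\in {\rm S}$, bringing everything back to the Borel--Weil theorem for dominant weights. The core local calculation is as follows: restricted to a fibre of $\pi_\alpha$, the line bundle $\Ll_\chi$ becomes $\Oo(n)$ on $\mathbb{P}^1$ with $n = \langle \chi, \alpha^\vee\rangle$. Combining the cohomology of line bundles on $\mathbb{P}^1$ with flat base change and Serre duality along the fibres yields three cases. If $\langle\chi+\rho,\alpha^\vee\rangle=0$, then $R\pi_{\alpha*}\Ll_\chi = 0$. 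If the pairing is strictly negative, then $\pi_{\alpha*}\Ll_\chi=0$ and a natural identification (matching $\langle s_\alpha\cdot\chi,\alpha^\vee\rangle = -n-2$) gives $R^1\pi_{\alpha*}\Ll_\chi \cong \pi_{\alpha*}\Ll_{s_\alpha\cdot\chi}$. If the pairing is strictly positive, then $R^1\pi_{\alpha*}\Ll_\chi=0$.

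Feeding this into the Leray spectral sequence for $\pi_\alpha$ produces the two consequences that will drive the induction: total vanishing ${\rm H}^i({\bf G}/{\bf B},\Ll_\chi)=0$ for all $i$ when $\langle\chi+\rho,\alpha^\vee\rangle=0$, and a degree-shift isomorphism ${\rm H}^{i+1}({\bf G}/{\bf B},\Ll_\chi)\cong {\rm H}^i({\bf G}/{\bf B},\Ll_{s_\alpha\cdot\chi})$ when $\langle\chi+\rho,\alpha^\vee\rangle<0$.

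I would then induct on the minimal number of simple reflections needed to move $\chi+\rho$ into the closure of the dominant Weyl chamber, applying one of the three cases above at each step. For part (a), singularity of $\chi+\rho$ means that somewhere along this process one encounters a pairing equal to zero, at which point all cohomology of $\Ll_\chi$ is killed. For parts (b) and (c), regularity guarantees that the process never hits the killing case: the reduction terminates at a dominant weight $w\cdot\chi$ after exactly $l(w)$ degree-shift steps, yielding ${\rm H}^i({\bf G}/{\bf B},\Ll_\chi)\cong {\rm H}^{i-l(w)}({\bf G}/{\bf B},\Ll_{w\cdot\chi})$. The base case is the classical Borel--Weil theorem: for dominant $\lambda$, ${\rm H}^0({\bf G}/{\bf B},\Ll_\lambda)$ is the irreducible ${\bf G}$-module of highest weight $\lambda$ in characteristic zero, and all higher cohomology vanishes.

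The main technical point requiring care is verifying that the minimal number of degree-shift steps in the reduction equals $l(w)$ rather than some larger number; this reduces to the standard combinatorial fact that $l(w)$ counts the positive roots sent to negative ones by $w$, matching the count of sign changes of $\langle\,\cdot\,,\alpha^\vee\rangle$ encountered along a minimal gallery between the Weyl chambers. The higher vanishing part of the base case (Kempf's theorem) can either be cited or re-proven by the same $\mathbb{P}^1$-bundle induction, using that for dominant $\lambda$ the sheaf $\pi_{\alpha*}\Ll_\lambda$ is itself a homogeneous vector bundle admitting a filtration by line bundles attached to weights amenable to the induction on ${\bf G}/{\bf P}_\alpha$.
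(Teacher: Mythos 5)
Your outline is essentially Demazure's argument, which is exactly the proof the paper relies on by citing \cite{Dem}: the $\mathbb{P}^1$-bundles $\pi_\alpha\colon {\bf G}/{\bf B}\to {\bf G}/{\bf P}_\alpha$, the killing case $\langle\chi+\rho,\alpha^\vee\rangle=0$, the degree-shift isomorphism via relative Serre duality in the fibres, and induction on $l(w)$ with Borel--Weil as the base case. The sketch is correct as stated (in characteristic zero, where the equivariant identification $R^1\pi_{\alpha*}\Ll_\chi\simeq\pi_{\alpha*}\Ll_{s_\alpha\cdot\chi}$ indeed holds), so there is nothing to add beyond noting that the paper itself gives no independent proof.
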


\subsection{Cohomology of line bundles} 

Some bits of Theorem \ref{th:Bott-Demazure_th} are still true over $\mathbb Z$: if a weight
$\chi$ is such that $\langle \chi + \rho, \alpha ^{\vee}\rangle =0$ for some simple root $\alpha$, then the corresponding line bundle is acyclic. Indeed, Lemma from  \cite[Section 2]{Dem} holds over fields of arbitrary characteristic. 

\begin{theorem}[Kempf's vanishing theorem]\label{th:Kempf}
Let $\chi \in X(\bf T)$, i.e. $\langle \chi ,\alpha ^{\vee}\rangle \geq 0$ for all simple coroots $\alpha ^{\vee}$. Then ${\rm H}^i({\bf G}/{\bf B},\Ll _{\chi})=0$ for $i>0$.
\end{theorem}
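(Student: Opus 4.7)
The plan is to prove Kempf's vanishing via Frobenius splittings, a strategy very much in the spirit of the rest of this paper. The argument naturally breaks into three steps: constructing a Frobenius splitting of $\mathbf{G}/\mathbf{B}$, using it to reduce the vanishing of $H^{i}(\mathbf{G}/\mathbf{B},\mathcal{L}_{\chi})$ to that of a sufficiently high Frobenius twist, and finally verifying this vanishing by reduction to an ample line bundle on an appropriate partial flag variety.

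First I would construct a Frobenius splitting $\sigma:\mathsf{F}_{\ast}\mathcal{O}_{\mathbf{G}/\mathbf{B}}\to\mathcal{O}_{\mathbf{G}/\mathbf{B}}$ of the canonical inclusion. By the Mehta--Ramanathan criterion it is enough to exhibit a global section of $\omega_{\mathbf{G}/\mathbf{B}}^{1-p}=\mathcal{L}_{2(p-1)\rho}$ whose zero divisor contains a reduced, simple normal crossings component. A natural candidate is $s=s_{+}\cdot s_{-}$, where $s_{\pm}$ are extremal weight vectors of the Steinberg module $H^{0}(\mathbf{G}/\mathbf{B},\mathcal{L}_{(p-1)\rho})$, whose zero divisor is supported on the union of the codimension-one Schubert and opposite Schubert divisors; the required transversality then reduces to a local calculation in the big cell $\mathbf{B}^{-}\mathbf{B}/\mathbf{B}$.

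Once the splitting is in hand, tensoring with $\mathcal{L}_{\chi}$ yields a split injection $\mathcal{L}_{\chi}\hookrightarrow \mathsf{F}_{\ast}\mathcal{L}_{\chi}^{p}$, and hence a chain
\[
H^{i}(\mathbf{G}/\mathbf{B},\mathcal{L}_{\chi})\hookrightarrow H^{i}(\mathbf{G}/\mathbf{B},\mathcal{L}_{\chi}^{p})\hookrightarrow H^{i}(\mathbf{G}/\mathbf{B},\mathcal{L}_{\chi}^{p^{2}})\hookrightarrow\cdots
\]
for every $i$. It then suffices to prove the vanishing of the right-hand side for some large $n$. For $\chi$ strictly dominant this is immediate: $\mathcal{L}_{\chi}$ is ample and Serre vanishing applies. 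For a general dominant $\chi$ I would let $\mathbf{P}_{\chi}$ be the standard parabolic associated to the simple roots $\alpha$ with $\langle \chi,\alpha^{\vee}\rangle=0$; then $\chi$ extends to a character of $\mathbf{P}_{\chi}$ and $\mathcal{L}_{\chi}=\pi^{\ast}\overline{\mathcal{L}_{\chi}}$ for an ample bundle $\overline{\mathcal{L}_{\chi}}$ on $\mathbf{G}/\mathbf{P}_{\chi}$ along $\pi:\mathbf{G}/\mathbf{B}\to\mathbf{G}/\mathbf{P}_{\chi}$. The fibres of $\pi$ are flag varieties for a smaller semisimple group on which $\mathcal{L}_{\chi}$ is trivial, so induction on the semisimple rank yields $R^{i}\pi_{\ast}\mathcal{L}_{\chi}^{n}=0$ for $i>0$ and $\pi_{\ast}\mathcal{L}_{\chi}^{n}=\overline{\mathcal{L}_{\chi}}^{n}$; Serre vanishing downstairs combined with the Leray spectral sequence then gives the required vanishing upstairs.

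The hardest step will be the construction of the Frobenius splitting: the local transversality analysis of the divisor of $s_{+}s_{-}$ at the identity coset is the only place where the specific geometry of $\mathbf{G}/\mathbf{B}$ (through properties of the Steinberg module and the Bruhat decomposition) enters in an essential way. The remaining reductions are essentially formal, though the inductive descent to a partial flag variety should be set up carefully, since the present paper will later exploit similar flag-variety fibrations.
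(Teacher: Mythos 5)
The paper does not actually prove this statement: Kempf's theorem is quoted as classical background (the preliminaries are reproduced ``without proofs''), so your proposal has to stand on its own, and as written it has a genuine gap at the singular weights --- most visibly at $\chi=0$. Your reduction of a non-regular dominant $\chi$ to the partial flag variety ${\bf G}/{\bf P}_{\chi}$ is circular precisely when $\chi=0$: there ${\bf P}_{\chi}={\bf G}$, the base is a point, the fibre of $\pi$ is ${\bf G}/{\bf B}$ itself (not a flag variety of smaller semisimple rank), and the input you need, $R^{i}\pi_{\ast}\mathcal{O}=H^{i}({\bf G}/{\bf B},\mathcal{O}_{{\bf G}/{\bf B}})=0$, is exactly the case of the theorem you are trying to prove. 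Note also that the plain splitting gives nothing here, since $H^{i}(\mathcal{O})\hookrightarrow H^{i}(\mathcal{O}^{p})$ is the identity. Consequently the induction on semisimple rank never establishes the weight-zero case for any group of rank $\geq 2$, and since the inductive step for a nonzero singular $\chi$ consumes the weight-zero case for the Levi ${\bf L}_{\chi}$, the whole induction is incomplete unless $H^{>0}({\bf G}/{\bf B},\mathcal{O})=0$ is supplied by a separate argument. A secondary imprecision: the Mehta--Ramanathan criterion is not ``the zero divisor contains a reduced snc component''; one needs the divisor of the section of $\omega^{1-p}$ to be $(p-1)(Z_{1}+\dots+Z_{N})+D$ with $N=\dim X$ prime divisors meeting transversally at a point off $D$, and for ${\bf G}/{\bf B}$ the Schubert and opposite Schubert divisors are only $2\,{\rm rk}({\bf G})$ in number, so the splitting property of $s_{+}s_{-}$ requires either the Bott--Samelson route or a genuine big-cell computation of the relevant coefficient, not a transversality check.

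Both defects are repaired simultaneously by the standard strengthening of your step two: the Steinberg section exhibits ${\bf G}/{\bf B}$ not merely as split but as $(p-1)D$-split along the (opposite) Schubert divisor $D$ with $\mathcal{O}(D)=\Ll_{\rho}$, which yields split injections
\begin{equation*}
H^{i}({\bf G}/{\bf B},\Ll_{\chi})\hookrightarrow H^{i}({\bf G}/{\bf B},\Ll_{p\chi+(p-1)\rho})
\hookrightarrow \dots \hookrightarrow H^{i}({\bf G}/{\bf B},\Ll_{p^{n}\chi+(p^{n}-1)\rho}).
\end{equation*}
Since $\Ll_{p^{n}\chi+(p^{n}-1)\rho}=\Ll_{\chi+\rho}^{\,p^{n}}\otimes\Ll_{-\rho}$ and $\chi+\rho$ is regular dominant, hence ample, Serre vanishing applied to the fixed sheaf $\Ll_{-\rho}$ kills the right-hand side for $n\gg 0$ uniformly for every dominant $\chi$, including $\chi=0$ and all walls; this is Andersen--Haboush's argument in splitting language and removes both the rank induction and the descent to ${\bf G}/{\bf P}_{\chi}$ altogether. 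If you prefer to keep your descent argument, you must add an independent proof that $R\pi_{\ast}\mathcal{O}_{{\bf G}/{\bf B}}=\mathcal{O}_{{\bf G}/{\bf P}}$ (equivalently $H^{>0}({\bf L}/{\bf B}_{\bf L},\mathcal{O})=0$ for all Levis, including ${\bf L}={\bf G}$), which is not a formal consequence of what you have written.
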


Besides this, however, very little of Theorem \ref{th:Bott-Demazure_th} holds over $\mathbb Z$ \cite[Part II, Chapter 5]{Jan}. However, it still holds for weights lying in the interior of the bottom alcove in the  dominant chamber \cite[Theorem 2.3 and Corollary 2.4]{And}:

\begin{theorem}\label{th:AndthII}
If $\chi$ is a weight such that for a simple root $\alpha$ one has
$0\leq \langle \chi + \rho,\alpha ^{\vee}\rangle \leq p$ then 

\vspace*{0.2cm}

\begin{equation}
{\rm H}^i({\bf G}/{\bf B},\Ll _{\chi}) = {\rm H}^{i+1}({\bf G}/{\bf
  B},\Ll _{s_{\alpha}\cdot \chi}).
\end{equation}

\vspace*{0.2cm}

\end{theorem}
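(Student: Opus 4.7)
The plan is to factor the cohomology through the minimal parabolic ${\bf P}_{\alpha} \supset {\bf B}$ associated to $\alpha$, by studying the Zariski-locally-trivial $\mathbb{P}^1$-bundle $\pi: {\bf G}/{\bf B} \to {\bf G}/{\bf P}_{\alpha}$ with fibre ${\bf P}_{\alpha}/{\bf B} \cong \mathbb{P}^1$. The aim is to identify the complex $R\pi_* \Ll_\chi$ with $R\pi_* \Ll_{s_\alpha \cdot \chi}[1]$ as ${\bf P}_{\alpha}$-equivariant objects on ${\bf G}/{\bf P}_{\alpha}$. Granting this identification, the Leray spectral sequence collapses in both cases and yields the desired isomorphism $H^i({\bf G}/{\bf B}, \Ll_\chi) \cong H^{i+1}({\bf G}/{\bf B}, \Ll_{s_\alpha\cdot\chi})$.

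Set $m := \langle \chi + \rho, \alpha^\vee \rangle$, so that $s_\alpha \cdot \chi = \chi - m\alpha$ and the restrictions to any fibre are $\Ll_\chi|_{\mathbb{P}^1} = \mathcal{O}(m-1)$ and $\Ll_{s_\alpha\cdot\chi}|_{\mathbb{P}^1} = \mathcal{O}(-m-1)$. The case $m = 0$ is trivial: both fibrewise cohomologies vanish, and $s_\alpha \cdot \chi = \chi$, so both sides of the statement are zero. For $1 \leq m \leq p$, cohomology and base change show that $R^0\pi_*\Ll_\chi$ and $R^1\pi_*\Ll_{s_\alpha\cdot\chi}$ are locally free of rank $m$ on ${\bf G}/{\bf P}_{\alpha}$, while the other higher direct images vanish. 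Being ${\bf G}$-equivariant bundles on the homogeneous space ${\bf G}/{\bf P}_{\alpha}$, they are determined by their fibres at $e{\bf P}_{\alpha}$ viewed as ${\bf P}_{\alpha}$-modules; the unipotent radical of ${\bf P}_{\alpha}$ acts trivially on each fibre (since ${\bf P}_{\alpha}/{\bf B} = {\bf L}_{\alpha}/({\bf L}_{\alpha} \cap {\bf B})$), so we are really comparing modules over the Levi ${\bf L}_{\alpha}$.

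It remains to compare the two ${\bf L}_{\alpha}$-modules $H^0({\bf P}_{\alpha}/{\bf B}, \Ll_\chi)$ and $H^1({\bf P}_{\alpha}/{\bf B}, \Ll_{s_\alpha\cdot\chi})$. An explicit weight calculation shows that both have the same ${\bf T}$-weights, namely $\chi, \chi - \alpha, \ldots, \chi - (m-1)\alpha$; restricted to the $SL_2$-factor of ${\bf L}_{\alpha}$, the first is the induced module $\nabla^{SL_2}_{m-1}$ and the second is the Weyl module $\Delta^{SL_2}_{m-1}$ (via Serre duality on the fibre, using that the relative dualising sheaf of $\pi$ is $\Ll_{-\alpha}$). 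These two $SL_2$-modules coincide in the range $0 \leq m - 1 \leq p - 1$, since there $\Delta^{SL_2}_{m-1}$ is simple and therefore self-dual; this is exactly where the hypothesis $m \leq p$ enters. Combined with the coincidence of ${\bf T}$-weights, this yields the required ${\bf L}_{\alpha}$-equivariant isomorphism and hence the ${\bf P}_{\alpha}$-equivariant isomorphism of the two pushforwards. I expect the main delicacy to be the bookkeeping in this last identification --- in particular, the $\alpha$-shift arising from the relative dualising sheaf, which must be absorbed against the symmetry of the weight string --- but once that is handled the theorem follows at once from Leray.
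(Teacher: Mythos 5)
Your proposal is correct, and it is essentially the argument behind the paper's source for this statement: the paper itself gives no proof but quotes Andersen, and the classical proof is exactly your route --- push forward along the $\mathbb{P}^1$-bundle $\pi\colon {\bf G}/{\bf B}\rightarrow {\bf G}/{\bf P}_{\alpha}$, identify $R^0\pi_{\ast}\Ll_{\chi}$ with $R^1\pi_{\ast}\Ll_{s_{\alpha}\cdot\chi}$ using that the fibrewise modules $\nabla^{SL_2}_{m-1}$ and $\Delta^{SL_2}_{m-1}$ coincide for $m=\langle\chi+\rho,\alpha^{\vee}\rangle\leq p$ (simplicity up to and including the Steinberg weight $m-1=p-1$), and conclude by Leray, with the case $m=0$ degenerating to both sides vanishing. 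The only points worth writing out are the standard fact that the unipotent radical of ${\bf P}_{\alpha}$ acts trivially on the fibre cohomologies (so the comparison is genuinely one of Levi modules, and the matching ${\bf T}$-weight strings then pin down the central character), and the Serre-duality bookkeeping with relative dualising sheaf $\Ll_{-\alpha}$, both of which you have set up correctly.
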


The following theorem is used throughout in all the calculations concerning cohomology of line bundles (see \cite[Corollary 3.2]{An}):

\begin{theorem}\label{th:Andcor}
Let $\chi$ be a weight. If either $\langle \chi ,\alpha ^{\vee}\rangle
\geq -p$ or  $\langle \chi ,\alpha ^{\vee}\rangle = -ap^n-1$ for some
$a, n\in {\bf N}$ and $a<p$ then 

\vspace*{0.2cm}

\begin{equation}
{\rm H}^i({\bf G}/{\bf B},\Ll _{\chi}) = {\rm H}^{i-1}({\bf G}/{\bf
  B},\Ll _{s_{\alpha}\cdot \chi}).
\end{equation}

\vspace*{0.2cm}

\end{theorem}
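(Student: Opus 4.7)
The plan is to reduce the statement to a cohomological computation on the $\mathbb{P}^{1}$-fibration $\pi_{\alpha}\colon {\bf G}/{\bf B}\to {\bf G}/{\bf P}_{\alpha}$ and then handle the two clauses of the hypothesis separately, by fibrewise analysis in the first case and by a Frobenius-twist induction in the second. Because every fibre of $\pi_{\alpha}$ is a projective line, only $R^{0}\pi_{\alpha *}$ and $R^{1}\pi_{\alpha *}$ can be non-zero, and the Leray spectral sequence for $\pi_{\alpha}$ collapses to the short exact sequence
$$0 \longrightarrow {\rm H}^{i}({\bf G}/{\bf P}_{\alpha},\pi_{\alpha *}\Ll_{\chi}) \longrightarrow {\rm H}^{i}({\bf G}/{\bf B},\Ll_{\chi}) \longrightarrow {\rm H}^{i-1}({\bf G}/{\bf P}_{\alpha},R^{1}\pi_{\alpha *}\Ll_{\chi}) \longrightarrow 0.$$
The shift $i\mapsto i-1$ on the right already matches the asserted identity, so the heart of the argument is to identify $R^{1}\pi_{\alpha *}\Ll_{\chi}$ with $\pi_{\alpha *}\Ll_{s_{\alpha}\cdot\chi}$ and, at the same time, to kill the two ``wrong'' sheaves $\pi_{\alpha *}\Ll_{\chi}$ and $R^{1}\pi_{\alpha *}\Ll_{s_{\alpha}\cdot\chi}$. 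Since $\Ll_{\chi}$ restricts to $\Oo(\langle\chi,\alpha^{\vee}\rangle)$ on every fibre and the ${\bf P}_{\alpha}$-equivariance allows us to read the direct images as induced/Weyl modules for $\mathbf{SL}_{2}$, this reduces the problem to $\mathbf{SL}_{2}$-theory along the fibre.

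For the first clause, $\langle\chi,\alpha^{\vee}\rangle\geq -p$, the sought isomorphism comes from relative Serre duality along $\pi_{\alpha}$: the relative dualising sheaf is $\Ll_{-\alpha}$, and tensoring by it and dualising on the fibre implements precisely the dot-action of $s_{\alpha}$ on the weight. In the bottom alcove for $\mathbf{SL}_{2}$ (which is exactly why the bound $-p$ appears), induced and Weyl modules coincide, so the fibrewise duality globalises into the required isomorphism of higher direct images; the two auxiliary vanishings then fall out of Kempf's theorem \ref{th:Kempf} applied to the appropriate bundle on the base ${\bf G}/{\bf P}_{\alpha}$.

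The main obstacle is the sporadic clause $\langle\chi,\alpha^{\vee}\rangle=-ap^{n}-1$ with $a<p$, where the weight leaves the bottom alcove and the naive fibrewise comparison breaks down because Frobenius-twisted components of the $\mathbf{SL}_{2}$-representation are no longer negligible. I would argue by induction on $n$: apply Steinberg's tensor product theorem to factor the $\mathbf{SL}_{2}$-module attached to $\chi$ as a bottom-alcove factor (treated in the previous paragraph) tensored with a Frobenius twist of the module attached to a weight of strictly smaller $p$-adic length, and push the Frobenius twist through $\pi_{\alpha}$ up to the absolute Frobenius on the base. The numerical coincidence that makes the induction close is that $s_{\alpha}\cdot(-ap^{n}-1)=ap^{n}-1$, so that the $p$-adic expansions of $\chi$ and $s_{\alpha}\cdot\chi$ agree except in the top digit; this is also exactly why the shift identity persists for these sporadic weights and fails for other weights outside the bottom alcove.
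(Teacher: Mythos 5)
A preliminary remark: the paper contains no proof of Theorem \ref{th:Andcor} at all --- it is quoted verbatim from \cite[Corollary 3.2]{An} --- so your attempt is measured against Andersen's argument, and your skeleton (push down along the $\mathbb{P}^1$-fibration $\pi_{\alpha}\colon {\bf G}/{\bf B}\to {\bf G}/{\bf P}_{\alpha}$ and compare $R^1\pi_{\alpha\ast}\Ll_{\chi}$ with $\pi_{\alpha\ast}\Ll_{s_{\alpha}\cdot\chi}$ fibrewise, i.e.\ at the level of ${\bf P}_{\alpha}$-modules) is indeed the standard route. Two slips in your set-up: a two-row Leray spectral sequence gives a long exact sequence, not the short exact sequences you wrote (the $d_2$-differential need not vanish a priori); and the two ``wrong'' terms are not killed by Kempf vanishing on the base but at the sheaf level, since $\pi_{\alpha\ast}\Ll_{\chi}=0$ as soon as $\langle\chi,\alpha^{\vee}\rangle<0$ and $R^1\pi_{\alpha\ast}\Ll_{s_{\alpha}\cdot\chi}=0$ as soon as $\langle s_{\alpha}\cdot\chi,\alpha^{\vee}\rangle\geq -1$; Kempf's theorem concerns dominant line bundles on ${\bf G}/{\bf B}$ and is not applicable to these homogeneous bundles. (Note also that the statement is implicitly meant for $\langle\chi,\alpha^{\vee}\rangle\leq -2$, which is how the paper uses it: for $\chi=0$, $i=0$ the displayed equality would read ${\sf k}=0$.) Granting this, your first clause is essentially correct: relative Serre duality, with $\omega_{\pi}=\Ll_{-\alpha}$, identifies $R^1\pi_{\alpha\ast}\Ll_{\chi}$ with the bundle attached to the ${\bf P}_{\alpha}$-Weyl module of highest weight $s_{\alpha}\cdot\chi$, and for $\langle s_{\alpha}\cdot\chi,\alpha^{\vee}\rangle\leq p-1$ that Weyl module is simple, hence coincides with the induced module defining $\pi_{\alpha\ast}\Ll_{s_{\alpha}\cdot\chi}$.

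The genuine gap is your treatment of the second clause. Steinberg's tensor product theorem factors \emph{simple} modules, so ``factoring the ${\bf SL}_2$-module attached to $\chi$'' presupposes exactly the simplicity that has to be proved; and the proposed mechanism for the induction --- pushing the Frobenius-twisted factor ``through $\pi_{\alpha}$ up to the absolute Frobenius on the base'' --- is not available: that factor is a Frobenius twist of a module of the Levi, not the pullback of a sheaf from ${\bf G}/{\bf P}_{\alpha}$, so the projection formula gives nothing and the induction on $n$ never gets started. What is actually needed, and suffices, is a single fibrewise fact: for ${\bf SL}_2$ the induced module $\nabla_{ap^n-1}$ is simple, hence equal to the Weyl module $\Delta_{ap^n-1}$. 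This does follow from Steinberg, but by a dimension count rather than an induction: since $ap^n-1=(a-1)p^n+(p-1)(1+p+\dots+p^{n-1})$, the module ${\sf L}_{a-1}^{[n]}\otimes {\sf L}_{p-1}\otimes {\sf L}_{p-1}^{[1]}\otimes\dots\otimes {\sf L}_{p-1}^{[n-1]}$ is simple of highest weight $ap^n-1$ and dimension $ap^n=\dim\nabla_{ap^n-1}$, forcing $\Delta_{ap^n-1}={\sf L}_{ap^n-1}=\nabla_{ap^n-1}$ (the hypothesis $a<p$ keeps $a-1$ restricted). Here $\langle s_{\alpha}\cdot\chi,\alpha^{\vee}\rangle=-\langle\chi,\alpha^{\vee}\rangle-2=ap^n-1$, so with this fact in hand your clause-one argument applies verbatim, and no interaction between the Frobenius morphism and $\pi_{\alpha}$ is required anywhere.
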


\vspace*{0.5cm}

\section{Semiorthogonal decompositions, mutations, and exceptional collections}\label{sec:SOD_mutations}

\vspace*{0.5cm}

\subsection{Semiorthogonal decompositions}

Let $k$ be a field. Assume given a $k$--linear triangulated category $\D$, equipped with a shift functor $[1]\colon \D \rightarrow \D$.  For two
objects $A, B \in \D$ let $\Hom ^{\bullet}_{\D}(A,B)$ be
the graded $k$-vector space $\oplus _{i\in \mathbb Z}\Hom _{\D}(A,B[i])$. 
Let $\A \subset \D$ be a full triangulated subcategory,
that is a full subcategory of $\D$ which is closed under shifts.

The original source for most of the definitions and statements in this section is \cite{Bo}. 
We follow the expositions of  \cite[Section 2.1]{Efim} and \cite[Section 2.2]{Kuzrat}.

\begin{definition}\label{def:orthogonalcat}
The right orthogonal $\A ^{\perp}\subset \D$ is defined to be
the full subcategory

\vspace*{0.2cm}

\begin{equation}
\A^{\perp} = \{B \in \D \colon \Hom _{\D}(A,B) = 0 \}
\end{equation}

\vspace*{0.2cm}

\noindent for all $A \in \A$. The left orthogonal $^{\perp}\A$ is defined similarly. 

\end{definition}

\begin{definition}\label{def:admissible}
A full triangulated subcategory $\A$ of $\D$ is called
{\it right admissible} if the inclusion functor $\A\hookrightarrow \D$ has a right adjoint. Similarly, $\A$ is called {\it left
  admissible} if the inclusion functor has a left adjoint. Finally,
$\A$ is {\it admissible} if it is both right and
left admissible.
\end{definition}

If a full triangulated category $\A\subset \D$ is right admissible then every object $X\in \D$ fits into a distinguished triangle
  
\vspace*{0.2cm}
  
\begin{equation}
\dots \longrightarrow  Y\longrightarrow X\longrightarrow Z\longrightarrow Y[1]\rightarrow \dots
\end{equation}

\vspace*{0.2cm}

\noindent with $Y\in \A$ and $Z\in \A^{\perp}$. One then
says that there is a semiorthogonal decomposition of $\D$ into
the subcategories $(\A^{\perp}, \ \A)$. More generally,
assume given a sequence of full triangulated subcategories $\A _1,\dots,\A_n \subset \D$. Denote $\langle \A _1,\dots,\A_n\rangle$ the triangulated subcategory of $\D$ generated by $\A_1,\dots,\A_n$.

\begin{definition}\label{def:semdecomposition}
A sequence $(\A_1,\dots,\A_n)$ of admissible subcategories of
$\D$ is called {\it semiorthogonal} if $\A _i\subset \A_j^{\perp}$ for $1\leq i < j\leq n$,
and $\A_i\subset {^{\perp}\A_j}$ for $1\leq j < i\leq n$.
The sequence $(\A_1,\dots,\A_n)$ is called a {\it semiorthogonal
  decomposition} of $\D$ if $\langle \A_1, \dots, \A_n
\rangle^{\perp} = 0$, that is $\D = \langle \A_1,\dots,\A_n\rangle$.
\end{definition}

\vspace{0.2cm}

The above definition is equivalent to:

\begin{definition}\label{def:semdecomposition-filtration}
A semiorthogonal decomposition of a triangulated category $\D$ is a sequence of full triangulated subcategories $(\A_1,\dots,\A_n)$ in $\D$ such that $\A _i\subset \A_j^{\perp}$ for $1\leq i < j\leq n$ and for every object $X\in \D$ there exists a chain of morphisms in $\D$,

\vspace*{0.2cm}

\[
\xymatrix@C=.5em{
0_{\ } \ar@{=}[r] & X_n \ar[rrrr] &&&& X_{n-1} \ar[rrrr] \ar[dll] &&&& X_{n-2}
\ar[rr] \ar[dll] && \ldots \ar[rr] && X_{1}
\ar[rrrr] &&&& X_0 \ar[dll] \ar@{=}[r] &  X_{\ } \\
&&& A_{n-1} \ar@{->}^{[1]}[ull] &&&& A_{n-2} \ar@{->}^{[1]}[ull] &&&&&&&& A_0\ar@{->}^{[1]}[ull] 
}
\]

\vspace*{0.2cm}

such that a cone $A_k$ of the morphism $X_k\rightarrow X_{k-1}$ belongs to $\A _k$ for $k=1,\dots ,n$.

\vspace*{0.2cm}

\end{definition}


\subsection{Mutations}

Let $\D$ be a triangulated category and assume $\D$ admits a semiorthogonal decomposition 
$\D = \langle \A, \B \rangle$.

\begin{definition}\label{def:left-right_mutation}
The left mutation of $\B$ through $\A$ is defined to be ${\bf L}_{\A}(\B): = \A ^{\perp}$. The 
right mutation of $\A$ through $\B$ is defined to be ${\bf R}_{\B}(\A): = {^{\perp}\B}$.
\end{definition}

One obtains semiorthogonal decompositions $\D = \langle {\bf L}_{\A}(\B), \A\rangle$ and 
$\D = \langle \A , {\bf R}_{\B}(\A)\rangle$.\par 

Let $\A$ be an admissible subcategory of $\D$, and $i : \A\rightarrow \D$ the embedding functor.
It admits a left and a right adjoint functors $\D\rightarrow \A$, the subcategory $\A$ being admissible; denote them $i^{\ast}$ and $i^{!}$, respectively.  Given an object $F \in \D$, define the left mutation ${\bf L}_{\A}(F)$ and the right mutations ${\bf R}_{\A}(F)$  of $F$ through $\A$ by 

\vspace*{0.2cm}

\begin{equation}\label{eq:left-right_mutations_of_an_object}
{\bf L}_{\A}(F): = {\sf Cone}(ii^{!}(F)\rightarrow F), \qquad \qquad {\bf R}_{\A}(F): = {\sf Cone}(F\rightarrow ii^{\ast}(F))[-1].
\end{equation}

\vspace*{0.2cm}

One the proves:

\begin{lemma}\cite[Lemma 2.7]{Kuzrat}\label{lem:mutations_made_explicit}
There are equivalences ${\bf L}_{\A} :\B\simeq \D/\A\simeq {\bf L}_{\A}(\B)$ and ${\bf R}_{\A} :\A\simeq \D/\B\simeq {\bf R}_{\A}(\B)$.
\end{lemma}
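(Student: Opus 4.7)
The plan is to exhibit the functor ${\bf L}_{\A}$ from \eqref{eq:left-right_mutations_of_an_object} as an explicit equivalence $\B\xrightarrow{\sim}\A^{\perp}$, and then to identify both sides with the Verdier quotient $\D/\A$ via a universal property argument; the second equivalence involving ${\bf R}$ is symmetric, obtained by interchanging the roles of $\A$ and $\B$.

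First, I would check that ${\bf L}_{\A}(F)\in \A^{\perp}$ for every $F\in \D$. Applying $\Hom(iA',-)$ to the defining triangle $ii^{!}F\to F\to {\bf L}_{\A}(F)\to ii^{!}F[1]$, the adjunction $i\dashv i^{!}$ together with $i^{!}i\simeq \mathrm{id}_{\A}$ shows that the map $\Hom(iA', ii^{!}F)\to \Hom(iA', F)$ induced by the counit is an isomorphism. Hence $\Hom(iA', {\bf L}_{\A}(F)[n])=0$ for all $n\in \mathbb Z$ and all $A'\in\A$, so ${\bf L}_{\A}$ descends to an exact functor $\D\to \A^{\perp}$.

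Next, I would identify $\B$ with $\A^{\perp}$ as subcategories of $\D$. Semiorthogonality already gives $\B\subset \A^{\perp}$. For the converse, given $X\in \A^{\perp}$, Definition \ref{def:semdecomposition-filtration} applied to $\D=\langle \A,\B\rangle$ produces a triangle $B\to X\to A\to B[1]$ with $B\in \B$ and $A\in\A$; applying $\Hom(A,-)$, using $\Hom(A,B)=0$ and $X\in \A^{\perp}$, forces $\Hom(A,A)=0$, so $A=0$ and $X\simeq B\in \B$. Moreover, for $X\in \B=\A^{\perp}$ the adjunction yields $i^{!}X=0$, so the defining triangle collapses to ${\bf L}_{\A}(X)\simeq X$; hence ${\bf L}_{\A}|_{\B}$ is naturally the identification $\B=\A^{\perp}={\bf L}_{\A}(\B)$.

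Finally, to identify $\B$ with $\D/\A$, I would invoke the universal property of the Verdier quotient: since ${\bf L}_{\A}(A)=\mathrm{Cone}(\mathrm{id}_{A})=0$ for $A\in\A$, the functor ${\bf L}_{\A}$ factors uniquely through a functor $\bar{{\bf L}}_{\A}\colon \D/\A\to \A^{\perp}$, and the composition $\A^{\perp}\hookrightarrow \D\xrightarrow{Q}\D/\A$ provides the candidate inverse. One direction of the composition is the identity thanks to ${\bf L}_{\A}|_{\A^{\perp}}=\mathrm{id}$; the other follows because the triangle $ii^{!}X\to X\to {\bf L}_{\A}(X)$ becomes an isomorphism $X\simeq {\bf L}_{\A}(X)$ in $\D/\A$ since $ii^{!}X\in\A$ is killed there. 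The main obstacle I expect is verifying full faithfulness of the functor $\A^{\perp}\to \D/\A$: concretely, for $X,Y\in \A^{\perp}$, one must show that any roof $X\xleftarrow{s} Z\to Y$ with $\mathrm{Cone}(s)\in\A$ is equivalent to a genuine morphism in $\Hom_{\D}(X,Y)$, which can be done by replacing $Z$ with its image ${\bf L}_{\A}(Z)\in \A^{\perp}$ via the semiorthogonal decomposition and invoking the vanishing $\Hom(\A,Y)=0$.
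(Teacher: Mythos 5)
Your central step is not correct, and it runs against the paper's own orientation convention. By Definition \ref{def:semdecomposition}, the decomposition $\D=\langle \A,\B\rangle$ means $\A\subset\B^{\perp}$, i.e.\ $\Hom_{\D}(B,A)=0$ for $A\in\A$, $B\in\B$; it does \emph{not} give $\Hom_{\D}(A,B)=0$. Consequently your claims that ``semiorthogonality already gives $\B\subset\A^{\perp}$'', the use of $\Hom(A,B)=0$ in your triangle argument, and the assertion that $i^{!}X=0$ for $X\in\B$ all invoke the wrong direction of orthogonality. What is true is $\B={}^{\perp}\A$, whereas $\A^{\perp}={\bf L}_{\A}(\B)$ is in general a \emph{different} subcategory of $\D$ --- this is precisely why the lemma has content. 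A concrete counterexample: for $\D=\Dd^{b}(\Pp^{1})=\langle \Oo,\Oo(1)\rangle$ with $\A=\langle\Oo\rangle$ and $\B=\langle\Oo(1)\rangle$, one has $\A^{\perp}=\langle\Oo(-1)\rangle\neq\B$ and ${\bf L}_{\A}(\Oo(1))\simeq\Oo(-1)[1]$, so ${\bf L}_{\A}|_{\B}$ is far from being the identity. Since your identification $\B=\A^{\perp}$ (and hence ${\bf L}_{\A}|_{\B}\simeq\mathrm{id}$) is what carries the equivalence $\B\simeq{\bf L}_{\A}(\B)$ in your argument, the main assertion of the lemma is left unproved.

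The remaining parts are essentially fine and standard: your first paragraph correctly shows ${\bf L}_{\A}(F)\in\A^{\perp}$, and your Verdier-quotient argument correctly identifies $\A^{\perp}\simeq\D/\A$ with quasi-inverse induced by ${\bf L}_{\A}$ (indeed $i^{!}X=0$ does hold for $X\in\A^{\perp}$, which is where that observation belongs). The missing half is to show that $Q|_{\B}\colon\B\to\D/\A$ is \emph{also} an equivalence; this follows by the dual argument using the right adjoint $j^{!}$ of the inclusion $j\colon\B\hookrightarrow\D$: since $\Hom_{\D}(\B,\A)=0$ one has $j^{!}A=0$ for $A\in\A$, so $j^{!}$ descends to $\D/\A$, and the decomposition triangle $jj^{!}X\to X\to ii^{\ast}X$ coming from Definition \ref{def:semdecomposition-filtration} shows that the descended functor and $Q|_{\B}$ are mutually inverse. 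Then, since $Q$ kills the first term of the triangle $ii^{!}B\to B\to{\bf L}_{\A}(B)$, one gets ${\bf L}_{\A}|_{\B}\simeq (Q|_{\A^{\perp}})^{-1}\circ Q|_{\B}$, which is exactly the chain $\B\simeq\D/\A\simeq{\bf L}_{\A}(\B)$ of the lemma; the statement for right mutations is obtained dually (note it should read ${\bf R}_{\B}\colon\A\simeq\D/\B\simeq{\bf R}_{\B}(\A)$, consistent with Definition \ref{def:left-right_mutation}). The paper itself offers no proof, referring to \cite[Lemma 2.7]{Kuzrat}, so your proposal must stand on its own, and as written it does not.
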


\begin{proposition}\cite[Proposition 2.3]{Bo}\label{prop:right-left_mutations_inverse}
Let $\D = \langle \A, \B \rangle$ be as above. Right and left mutations are mutually inverse to each other, i.e. ${\bf R}_{\A}{\bf L}_{\A}\simeq {\rm id}_{\A}$, and ${\bf L}_{\B}{\bf R}_{\B}\simeq {\rm id}_{\B}$.
\end{proposition}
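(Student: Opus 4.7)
My strategy is to derive both identities from the uniqueness (up to unique isomorphism) of the SOD decomposition triangle, by exhibiting the defining triangles of ${\bf L}_{\A}$ and ${\bf R}_{\A}$ as two such decomposition triangles of one and the same object.

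Let $i\colon\A\hookrightarrow\D$ be the inclusion. Pick $X\in\B$. From \eqref{eq:left-right_mutations_of_an_object}, the defining triangle of ${\bf L}_{\A}(X)$ reads
\[
ii^{!}(X)\longrightarrow X\longrightarrow {\bf L}_{\A}(X)\longrightarrow ii^{!}(X)[1].
\]
Rotating it once yields
\[
X\longrightarrow {\bf L}_{\A}(X)\longrightarrow ii^{!}(X)[1]\longrightarrow X[1],
\]
which is a decomposition triangle of ${\bf L}_{\A}(X)$ relative to $\D=\langle\A,\B\rangle$: the leftmost term lies in $\B$, while the rightmost, being a shift of an object of $\A$, again lies in $\A$. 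On the other hand, the defining triangle
\[
{\bf R}_{\A}({\bf L}_{\A}(X))\longrightarrow {\bf L}_{\A}(X)\longrightarrow ii^{*}({\bf L}_{\A}(X))\longrightarrow {\bf R}_{\A}({\bf L}_{\A}(X))[1]
\]
is visibly another decomposition triangle of the same object, with left term in ${}^{\perp}\A=\B$ and right term in $\A$. Uniqueness of the SOD decomposition triangle then forces the two triangles to be canonically isomorphic, and in particular ${\bf R}_{\A}({\bf L}_{\A}(X))\simeq X$, functorially in $X$. The symmetric identity is proved by the mirror argument: for $Y\in\A$, rotating the defining triangle of ${\bf R}_{\B}(Y)$ produces a decomposition triangle of ${\bf R}_{\B}(Y)$ that must coincide with the defining triangle of ${\bf L}_{\B}({\bf R}_{\B}(Y))$, giving $Y\simeq{\bf L}_{\B}({\bf R}_{\B}(Y))$.

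The only nontrivial step, and the main obstacle, is the uniqueness of the SOD decomposition triangle itself: given two triangles $B_i\to X\to A_i$ ($i=1,2$) with $B_i\in\B$ and $A_i\in\A$, the vanishing $\Hom_{\D}(\B,\A[k])=0$ for all $k$, which follows from semiorthogonality together with the shift-closedness of both subcategories, lets one lift $\mathrm{id}_{X}$ to a morphism of triangles via the long exact sequence; the triangulated five-lemma then promotes this morphism to an isomorphism, and inspection shows the isomorphism is the unique one compatible with $\mathrm{id}_{X}$. Everything else in the argument is bookkeeping with adjunctions.
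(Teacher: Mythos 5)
Your argument is correct, and it is essentially the standard proof of this statement: the paper itself gives no proof (the proposition is quoted from \cite[Proposition 2.3]{Bo}), and your route --- exhibiting the rotated defining triangle of ${\bf L}_{\A}(X)$ and the defining triangle of ${\bf R}_{\A}({\bf L}_{\A}(X))$ as two decomposition triangles of one object for $\D=\langle \A,\B\rangle$, then invoking uniqueness of such triangles, plus the mirror argument for ${\bf L}_{\B}{\bf R}_{\B}$ --- is exactly the expected one. One small repair: the triangulated five-lemma requires two of the three vertical arrows to be isomorphisms, and you only have the middle one (${\rm id}_{X}$); instead, use the vanishing $\Hom_{\D}(\B,\A[k])=0$ to get, for every $T\in\B$, isomorphisms $\Hom(T,B_1)\simeq\Hom(T,Z)\simeq\Hom(T,B_2)$ compatible with the comparison map (equivalently, build the lift of ${\rm id}_{Z}$ in both directions and use uniqueness of lifts to see the composites are identities); this yields the canonical isomorphism, its uniqueness, and hence the functoriality in $X$ that you assert.
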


\begin{definition}\label{def:left_and_right_dual_decompositions}
Let $\D = \langle \A _1,\dots,\A_n\rangle$ be a semiorthogonal decomposition. The left dual semiorthogonal decomposition  $\D = \langle \B _n,\dots,\B_1\rangle$ is defined by 

\vspace*{0.2cm}

\begin{equation}
\B _i : = {\bf L}_{\A _1}{\bf L}_{\A _2}\dots {\bf L}_{\A _{i-1}}\A _i = {\bf L}_{\langle \A _1,\dots,\A_{i-1}\rangle}\A _i , \quad 1\leq i \leq n .
\end{equation}

\vspace*{0.2cm}

The right dual semiorthogonal decomposition $\D = \langle \C _n,\dots,\C_1\rangle$ is defined by 

\vspace*{0.2cm}

\begin{equation}
\C _i : = {\bf R}_{\A _n}{\bf R}_{\A _{n-1}}\dots {\bf R}_{\A _{i+1}}\A _i = {\bf R}_{\langle \A _{i+1},\dots,\A_{n}\rangle}\A _i , \quad 1\leq i \leq n .
\end{equation}

\vspace*{0.2cm}

\end{definition}

\begin{lemma}\cite[Lemma 2.10]{Kuzrat}\label{lem:mutations_of_completely_orthogonal_subcategories}
Let $\D = \langle \A _1,\dots,\A_n\rangle$ be a semiorthogonal decomposition such that the components $\A _k$ and $\A _{k+1}$ are completely orthogonal, i.e., $\Hom _{\D}(\A _k,\A _{k+1}) = 0$ and $\Hom _{\D}(\A _{k+1},\A _k) = 0$. Then 

\vspace*{0.2cm}

\begin{equation}
{\bf L}_{\A_k}\A _{k+1} = \A _{k+1} \qquad \qquad and  \qquad \qquad {\bf R}_{\A_{k+1}}\A _k = \A _k,
\end{equation}

\vspace*{0.2cm}

and both the left mutation of $\A _{k+1}$ through ${\A_k}$ and the right mutation of $\A_k$ through $\A _{k+1}$ boil down to a permutation and

\vspace*{0.2cm}

\begin{equation}
\D = \langle \A _1,\dots,\A _{k-1},\A _{k+1},\A _k, \A _{k+2},\dots \A_n\rangle
\end{equation}

\vspace*{0.2cm}

is the resulting semiorthogonal decomposition of $\D$.

\end{lemma}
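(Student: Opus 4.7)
The plan is to combine the cone formulas \eqref{eq:left-right_mutations_of_an_object} for mutations of objects with the adjunction properties of the inclusion functors, and then check that the reordered sequence still satisfies the semiorthogonality axioms.

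First I would work with an object $F\in \A_{k+1}$ and let $i\colon \A_k\hookrightarrow \D$ be the inclusion, with right adjoint $i^{!}$. By adjunction, for every $A\in \A_k$ one has
\begin{equation}
\Hom_{\A_k}(A,i^{!}F) \;=\; \Hom_{\D}(iA,F) \;=\; 0,
\end{equation}
where the last equality uses the complete orthogonality assumption $\Hom_{\D}(\A_k,\A_{k+1})=0$. Specialising to $A=i^{!}F$ forces $i^{!}F=0$, so the cone formula \eqref{eq:left-right_mutations_of_an_object} gives ${\bf L}_{\A_k}(F) = {\sf Cone}(0\to F) = F$. Hence ${\bf L}_{\A_k}\A_{k+1}=\A_{k+1}$ as subcategories of $\D$. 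The argument for ${\bf R}_{\A_{k+1}}\A_k=\A_k$ is symmetric, using the left adjoint $i^{\ast}$ to the embedding $\A_{k+1}\hookrightarrow\D$: for $G\in\A_k$ and any $B\in\A_{k+1}$,
\begin{equation}
\Hom_{\A_{k+1}}(i^{\ast}G,B) \;=\; \Hom_{\D}(G,iB) \;=\; 0
\end{equation}
by the other half of complete orthogonality, so $i^{\ast}G=0$ and the second cone formula in \eqref{eq:left-right_mutations_of_an_object} gives ${\bf R}_{\A_{k+1}}(G) = G[0]=G$.

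Next I would verify that the reordered sequence $\langle \A_1,\dots,\A_{k-1},\A_{k+1},\A_k,\A_{k+2},\dots,\A_n\rangle$ still generates $\D$ and obeys the semiorthogonality conditions of Definition \ref{def:semdecomposition}. Generation is clear since the underlying collection of subcategories is unchanged. For the semiorthogonality, all inclusions $\A_i\subset \A_j^{\perp}$ that do not involve the swapped pair $(\A_k,\A_{k+1})$ survive unchanged from the original decomposition, because the relative order of the remaining pairs is preserved. The only new condition to check is the one on the swapped pair itself, namely $\A_{k+1}\subset \A_k^{\perp}$, and this is precisely the vanishing $\Hom_{\D}(\A_k,\A_{k+1})=0$ provided by complete orthogonality.

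The only potential subtlety is ensuring that the decomposition $\D=\langle\A_1,\dots,\A_n\rangle$ really induces, on the subcategory $\langle\A_k,\A_{k+1}\rangle$, a two-step SOD to which Definition \ref{def:left-right_mutation} applies; this is a standard consequence of admissibility, so no real obstacle arises. Everything else is bookkeeping with adjoints, and the lemma follows.
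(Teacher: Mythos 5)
Your proposal is correct: the paper states this lemma only as a citation of \cite[Lemma 2.10]{Kuzrat} and gives no proof of its own, so there is nothing to diverge from. Your argument --- using adjunction plus complete orthogonality to get $i^{!}F=0$ for $F\in\A_{k+1}$ and $i^{\ast}G=0$ for $G\in\A_k$, so that the mutation triangles \eqref{eq:left-right_mutations_of_an_object} collapse to the identity, and then re-checking generation and the semiorthogonality conditions (only the condition $\A_{k+1}\subset\A_k^{\perp}$ on the swapped pair is new, and it is exactly the hypothesis $\Hom_{\D}(\A_k,\A_{k+1})=0$) --- is the standard proof of this statement and is complete.
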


\subsection{Exceptional collections}

Exceptional collections in $\sf k$--linear triangulated categories are a special case of semiorthogonal decompositions with each component of the decomposition being equivalent to $\Dd ^b({\sf Vect}-{\sf k})$. The above properties of mutations thus specialize to this special case.
Still, there are new features appearing as shown in Subsection \ref{subsec:blockcol}.

\begin{definition}\label{def:exceptcollection}
An object $E \in \D$ of a $\sf k$--linear triangulated category $\D$ is said to be exceptional if there is an isomorphism of graded $\sf k$-algebras 

\vspace{0.2cm}

\begin{equation}
\Hom _{\D}^{\bullet}(E,E) = {\sf k}.
\end{equation}

\vspace{0.2cm}

A collection of exceptional objects $(E_0,\dots,E_n)$ in $\D$ is called 
exceptional if for $1 \leq i < j \leq n$ one has

\vspace{0.2cm}

\begin{equation}
\Hom _{\D}^{\bullet}(E_j,E_i) = 0.
\end{equation}

\vspace{0.2cm}

\end{definition}
Denote $\langle E_0,\dots,E_n \rangle \subset {\D}$ the full
triangulated subcategory generated by the objects $E_0,\dots,E_n$. One 
proves \cite[Theorem 3.2]{Bo} that such a category is admissible. 
The collection $(E_0,\dots,E_n)$ in $\D$ is said to be {\it full} if 
$\langle E_0,\dots,E_n \rangle ^{\perp} = 0$, in other words ${\D}
= \langle E_0,\dots,E_n \rangle$.

 If $\A \subset \D$ is generated by an exceptional object $E$, then by 
 (\ref{eq:left-right_mutations_of_an_object}) the left and right mutations of an object $F\in \D$ through $\A$ are given by the following distinguished triangles:

\vspace*{0.2cm}

\begin{equation}
\RHom _{\D}(E,F)\otimes E\rightarrow F\rightarrow {\bf L}_{\langle E\rangle}(F), \qquad 
{\bf R}_{\langle E\rangle}(F)\rightarrow F\rightarrow \RHom _{\D}(F,E)^{\ast}\otimes E .
\end{equation}

\vspace*{0.2cm}


\subsection{Block collections and block mutations}\label{subsec:blockcol}


\vspace*{0.2cm}

The results of this section are needed for the subsequent Theorem \ref{th:Frobdecomposclaim}.
We follow the exposition of \cite[Section 4]{BrS}.

\begin{definition}\label{def:d-block_collection}
A $d$--block exceptional collection is an exceptional collection $\mathbb E = (E_1,\dots, E_n)$ together with a partition of $\mathbb E$  into $d$ subcollections

\vspace*{0.2cm}

\begin{equation}
\mathbb E = (\mathbb E _1,\dots ,\mathbb E _d),
\end{equation}

\vspace*{0.2cm}

called blocks, such that the objects in each block $\mathbb E _i$ are mutually orthogonal, i.e.
$\Hom ^{\bullet}(E,E') =  0 = \Hom ^{\bullet}(E',E)$ for any $E, E'\in \mathbb E _i$.
\end{definition}

For each integer $1< i\leq d$ we can define an operation $\tau _i$ on $d$--block collections in $\D$ by the rule

\vspace*{0.2cm}

\begin{eqnarray}\label{eq:mutations_block_collections}
& \tau _i (\mathbb E _1,\dots ,\dots \mathbb E _{i-2},\mathbb E _{i-1},\mathbb E _i,\mathbb E _{i+1},\dots \mathbb E _d) = \\
& (\mathbb E _1,\dots ,\dots \mathbb E _{i-2},{\bf L}_{\mathbb E _{i-1}},(\mathbb E _i)[-1],
\mathbb E _{i-1},\mathbb E _{i+1},\dots \mathbb E _d). \nonumber
\end{eqnarray}

\vspace*{0.2cm}

Here, if $\mathbb E _i = (E_{a+1},\dots ,E_b)$ then by definition 

\vspace*{0.2cm}

\begin{equation}
{\bf L}_{\mathbb E _{i-1}}(\mathbb E _i)= ({\bf L}_{\mathbb E _{i-1}}E_{a+1},\dots, {\bf L}_{\mathbb E _{i-1}}E_{b}).
\end{equation}

\vspace*{0.2cm}

\begin{remark}\label{rem:shift_in_mutation}
{\rm Note the shift by $[-1]$ in (\ref{eq:mutations_block_collections}) at ${\bf L}_{\mathbb E _{i-1}},(\mathbb E _i)$; this  will ensure that in the situations below the block mutations $\tau _i$ preserve collections of pure objects.}
\end{remark}

\vspace*{0.2cm}

Recall that a Serre functor (see \cite{BK}) on a $\bf k$--linear triangulated category $\D$ is an autoequivalence ${\mathbb S}_{\D}$ of $\D$ for which there are natural isomorphisms $\Hom _{\D}(E,F) = \Hom _{\D}(F,{\mathbb S}_{\D}(E))^{\ast}$ for $E,F\in \D$. If a Serre functor exists then it is unique up to isomorphism \cite[Proposition 3.4]{BK}.

Given a smooth algebraic variety $X$ over a field $k$, denote $\Dd ^b(X)$ the bounded derived category of coherent sheaves. It is a $k$--linear triangulated category. Let $\omega _X$ be the canonical line bundle on $X$. If $\D = \Dd ^b(X)$ for a smooth projective variety $X$ of dimension $d$, then ${\mathbb S}_{\D} = (-\otimes \omega _X)[d]$.

\begin{theorem}\cite[Theorem 4.5]{BrS}\label{th:mutations_of_block_collections}
Suppose $\mathbb E = (\mathbb E _1,\dots ,\mathbb E _d)$ is a full $d$--block collection and take $1<i\leq d$. Suppose $\D$ is is equipped with a $t$--structure that is preserved by the autoequivalence ${\mathbb S}_{\D}[1-d]$. Then 

\vspace*{0.2cm}

\begin{itemize}

\vspace*{0.2cm}

\item $\mathbb E$ pure implies $\tau _i(\mathbb E)$ pure.

\vspace*{0.2cm}

\item $\mathbb E$ pure implies $\tau _i(\mathbb E)$ strong.

\end{itemize}

\end{theorem}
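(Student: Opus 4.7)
The plan is to reduce both assertions to a careful analysis of a single one-object mutation, and then to control the cohomological placement of the resulting cone using the Serre functor hypothesis. Since the objects inside each block of $\mathbb E$ are mutually orthogonal by Definition \ref{def:d-block_collection}, the composite left mutation ${\bf L}_{\mathbb E _{i-1}}$ factors as an iterated mutation through the individual objects of $\mathbb E _{i-1}$ in any order, with no interference between these one-step mutations (this is the content of Lemma \ref{lem:mutations_of_completely_orthogonal_subcategories} applied inside $\langle \mathbb E_{i-1}\rangle$). It therefore suffices to establish both statements for ${\bf L}_E(F)$ with $E\in \mathbb E _{i-1}$ and $F\in \mathbb E _i$, and to patch the results together block-wise.

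For such a pair the defining triangle of the mutation reads
\[
\RHom _{\D}(E,F)\otimes E\longrightarrow F\longrightarrow {\bf L}_E(F).
\]
Purity of $\mathbb E$ places $E$ and $F$ in the heart of the given $t$-structure; the content of the first bullet is that ${\bf L}_E(F)$ is concentrated in cohomological degree $1$, so that the shift by $[-1]$ appearing in (\ref{eq:mutations_block_collections}) puts the mutated object back into the heart. This is precisely what the hypothesis on $\mathbb S_{\D}[1-d]$ is tailored to give: Serre duality combined with preservation of the $t$-structure yields an identification
\[
\Ext ^p_{\D}(E,F) \;\simeq\; \Ext ^{d-1-p}_{\D}\big(F,\mathbb S_{\D}[1-d](E)\big)^{\ast},
\]
in which both sides are now computed between objects of the heart. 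This forces $\Ext ^p_{\D}(E,F)$ to vanish outside the range $0\leq p\leq d-1$, and together with the semiorthogonalities furnished by the $d$-block structure and a cohomological analysis of the triangle above, it pins down ${\bf L}_E(F)$ to lie in a single degree, namely degree $1$.

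Once purity of $\tau _i(\mathbb E)$ is in hand, the strongness is verified by applying $\RHom _{\D}(-,-)$ between the new objects pairwise and reading off the answer from the mutation triangles. The semi-orthogonalities required by the ordering of $\tau _i(\mathbb E)$ are automatic, since by construction ${\bf L}_{\langle \mathbb E _{i-1}\rangle}$ lands in $\langle \mathbb E _{i-1}\rangle^{\perp}$; the remaining Ext vanishings reduce, via purity, to Hom computations inside the heart, which are controlled by the strongness data of $\mathbb E$ and the same Serre-duality identification used in the previous paragraph.

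The main obstacle is the cohomological concentration in the middle paragraph: extracting from the abstract Serre hypothesis the sharp statement that ${\bf L}_E(F)$ sits in cohomological degree exactly $1$, rather than merely in a bounded range. Serre duality by itself only gives a bijection of Ext groups, so one has to combine it with the $t$-structure preservation of $\mathbb S_{\D}[1-d]$ and with the block exceptional structure in order to both exclude cohomology outside of degree $1$ and verify that the evaluation map $\RHom _{\D}(E,F)\otimes E\to F$ contributes its cohomology at the expected position.
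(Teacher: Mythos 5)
This statement is not proved in the paper at all: it is quoted from \cite{BrS}, Theorem 4.5, so your proposal can only be judged against that reference, not against an internal argument. Two ingredients of your plan are sound and do match the start of the Bridgeland--Stern argument: the reduction of ${\bf L}_{\mathbb E_{i-1}}$ to iterated one-object mutations, using the complete orthogonality of the objects inside a block, and the Serre-duality observation that for $A,B$ in the heart of a $t$-structure preserved by ${\mathbb S}_{\D}[1-d]$ one has $\Ext^p_{\D}(A,B)=0$ unless $0\leq p\leq d-1$.

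The central step, however, is missing, as you yourself acknowledge. Knowing only that $\Hom^{\bullet}(E,F)$ is concentrated in degrees $0,\dots,d-1$ does not pin ${\bf L}_E(F)$ to a single degree: if the graded Hom has components in more than one degree, the source of the evaluation triangle $\RHom_{\D}(E,F)\otimes E\to F\to {\bf L}_E(F)$ is itself spread over several cohomological degrees, and so in general is the cone. What is actually required --- and what the cited proof establishes, using fullness of the $d$-block collection in an essential way --- is that for $E\in\mathbb E_{i-1}$ and $F\in\mathbb E_i$ in a \emph{pure full} $d$-block collection the graded Hom is concentrated in degree $0$ and the evaluation map is an epimorphism in the heart; only then is ${\bf L}_E(F)$ the kernel shifted by $[1]$, so that the shift in (\ref{eq:mutations_block_collections}) returns it to the heart (note also that your bookkeeping is off: an object concentrated in degree $1$ is moved \emph{away} from the heart by $[-1]$; the cone sits one step on the other side). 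Your proposal never invokes fullness, so it cannot produce this concentration statement; semiorthogonality of the blocks alone does not exclude higher Ext's between consecutive blocks. The strongness assertion has a second gap: you appeal to ``the strongness data of $\mathbb E$'', but the hypothesis is only that $\mathbb E$ is pure --- strongness of $\mathbb E$ is not assumed (the point of the second bullet is precisely to deduce strongness of $\tau_i(\mathbb E)$ from purity alone), so as written that step is circular. Both gaps would need to be filled, along the lines of the auxiliary lemmas in \cite{BrS}, before the argument is complete.
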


For our needs, Theorem \ref{th:mutations_of_block_collections} means the following. Let $X$ be a smooth variety of dimension $d-1$, and $\D = \Dd ^b(X)$ equipped with the standard $t$--structure 
$(\Dd ^b(X)^{\leq 0},\Dd ^b(X)^{\geq 0})$. Assume there exists a $d$--block full exceptional collection in $\Dd ^b(X)$ consisting of pure objects, that is, of coherent sheaves in this case.
Then the autoequivalence ${\mathbb S}_{\D}[1-d]$ is just tensoring with $\omega _X$, thus the condition of Theorem \ref{th:mutations_of_block_collections} is immediately satisfied. In this setting, Theorem \ref{th:mutations_of_block_collections} then means that left and right mutations are, too, exceptional collections consisting of coherent sheaves.

\begin{lemma}\cite[Lemma 2.11]{Kuzrat}\label{lem:mutations_canonical_class}
Assume given a semiorthogonal decomposition $\D = \langle \A ,\B\rangle $. Then 

\vspace*{0.2cm}

\begin{equation}
{\bf L}_{\A}(\B)=\B \otimes \omega _X \qquad \qquad and \qquad \qquad  {\bf R}_{\A}(\B)= \A \otimes \omega _X^{-1}.
\end{equation}

\vspace*{0.2cm}

\end{lemma}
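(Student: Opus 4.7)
The plan is to derive both identities from Serre duality on $\D = \Dd ^b(X)$. Recall that for a smooth projective $X$ of dimension $d$, the Serre functor is given by ${\mathbb S}_{\D}(F) = F\otimes \omega _X[d]$, and it is characterized by the functorial isomorphism $\Hom _{\D}(E,F)\simeq \Hom _{\D}(F,{\mathbb S}_{\D}(E))^{\ast}$.

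The key preliminary is that the Serre functor exchanges left and right orthogonals: for any admissible subcategory $\A\subset \D$, one has ${\mathbb S}_{\D}({}^{\perp}\A) = \A^{\perp}$, equivalently ${}^{\perp}\A = {\mathbb S}_{\D}^{-1}(\A^{\perp})$. Indeed, an object $B\in \D$ lies in ${}^{\perp}\A$ precisely when $\Hom _{\D}(B,A) = 0$ for every $A\in \A$, and by Serre duality this is equivalent to $\Hom _{\D}(A,{\mathbb S}_{\D}(B)) = 0$, i.e. to ${\mathbb S}_{\D}(B)\in \A^{\perp}$. Starting now from the semiorthogonal decomposition $\D = \langle \A,\B\rangle$, so that $\B = \A^{\perp}$, the mutated decomposition $\D = \langle {\bf L}_{\A}(\B),\A\rangle$ forces ${\bf L}_{\A}(\B) = {}^{\perp}\A$. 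Combining with the preliminary step identifies ${\bf L}_{\A}(\B)$ with ${\mathbb S}_{\D}^{-1}(\B)$, which at the level of full triangulated subcategories closed under shift is just a twist of $\B$ by a power of $\omega _X$. A symmetric argument starting from the opposite decomposition and computing ${\bf R}$ yields the analogous identity for the right mutation.

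The main obstacle is purely bookkeeping: aligning the direction of the Serre twist (${\mathbb S}_{\D}$ versus ${\mathbb S}_{\D}^{-1}$) and absorbing the canonical shift by $[d]$ against the sign conventions used in the definitions of left and right mutation. Once those conventions are pinned down, both identities reduce formally to rewriting ${\mathbb S}_{\D}^{\pm 1}$ as tensoring by $\omega _X^{\pm 1}$ at the level of subcategories, and the statement follows.
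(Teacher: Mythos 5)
Your route — Serre duality, plus the observation that the Serre functor exchanges the two orthogonals of an admissible subcategory — is exactly how the cited result is proved; the paper itself offers no argument and simply quotes \cite[Lemma 2.11]{Kuzrat}. Your preliminary step ${\mathbb S}_{\D}({}^{\perp}\A)=\A^{\perp}$ is stated and justified correctly.

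However, the two identifications you feed into it are swapped relative to the paper's conventions, and as written your chain proves the \emph{inverse} of the stated formula. By Definition \ref{def:semdecomposition}, $\D=\langle \A,\B\rangle$ means $\Hom(\B,\A)=0$, so the second component is $\B={}^{\perp}\A$ (and $\A=\B^{\perp}$), not $\B=\A^{\perp}$ as you assert; likewise the mutated decomposition $\D=\langle {\bf L}_{\A}(\B),\A\rangle$ gives ${\bf L}_{\A}(\B)=\A^{\perp}$ — this is literally Definition \ref{def:left-right_mutation} — not ${}^{\perp}\A$. Combining your (swapped) identifications with the preliminary step yields ${\bf L}_{\A}(\B)={\mathbb S}_{\D}^{-1}(\B)$, i.e.\ a twist by $\omega_X^{-1}$, and since the direction of the twist is the entire content of the lemma this cannot be deferred as ``bookkeeping to be pinned down later.'' The correct chain is ${\bf L}_{\A}(\B)=\A^{\perp}={\mathbb S}_{\D}({}^{\perp}\A)={\mathbb S}_{\D}(\B)=\B\otimes\omega_X[\dim X]$, which equals $\B\otimes\omega_X$ at the level of shift-closed subcategories; symmetrically ${\bf R}_{\B}(\A)={}^{\perp}\B={\mathbb S}_{\D}^{-1}(\B^{\perp})={\mathbb S}_{\D}^{-1}(\A)=\A\otimes\omega_X^{-1}$ (note the second formula in the lemma should indeed be read as ${\bf R}_{\B}(\A)$, as you implicitly did). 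With these two corrections, and the remark that the existence of ${\mathbb S}_{\D}$ requires $X$ smooth and proper (true in all applications here), your argument is complete.
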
 


Let $\Ee$ be a vector bundle of rank $r$ on $X$, and consider the associated projective bundle $\pi : \Pp (\Ee)\rightarrow X$. Denote $\Oo _{\pi}(-1)$ the invertible line bundle on $\Pp (\Ee)$ of relative degree $-1$, such that $\pi _{\ast}\Oo _{\pi}(1)=\Ee ^{\ast}$.  One has, \cite{Or}:

\vspace{0.2cm}

\begin{theorem}\label{th:Orvlovth}
The category $\Dd ^b(\Pp (\Ee))$ has a semiorthogonal decomposition: 


\begin{equation}
\Dd ^b(\Pp (\Ee)) = \langle \pi ^{\ast}\Dd ^b(X)\otimes \Oo _{\pi}(-r+1),\dots ,  \pi ^{\ast}\Dd ^b(X)\otimes \Oo _{\pi}(-1),\pi ^{\ast}\Dd ^b(X)\rangle .
\end{equation}

\vspace{0.1cm}

\end{theorem}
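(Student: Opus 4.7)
The plan is to verify the two axioms of a semiorthogonal decomposition from Definition \ref{def:semdecomposition}: each embedding $\Phi_i\colon \Dd^b(X)\to \Dd^b(\Pp(\Ee))$, $A\mapsto \pi^{\ast}A\otimes \Oo_\pi(-i)$, is fully faithful; the collection $(\Phi_{r-1},\ldots,\Phi_1,\Phi_0)$ is semiorthogonal; and its image generates the whole derived category. The only geometric input needed is the standard computation of $R\pi_{\ast}\Oo_\pi(k)$, which reduces fiberwise to cohomology of line bundles on $\Pp^{r-1}$: it equals $\Oo_X$ for $k=0$, and vanishes identically for $-r+1\le k\le -1$.

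Fully faithfulness and semiorthogonality then both fall out of a single computation. For $0\le i\le j\le r-1$ and $A,B\in\Dd^b(X)$, the projection formula gives
\begin{equation}
\RHom\bigl(\pi^{\ast}A\otimes\Oo_\pi(-j),\ \pi^{\ast}B\otimes\Oo_\pi(-i)\bigr)=\RHom\bigl(A,\ B\otimes R\pi_{\ast}\Oo_\pi(j-i)\bigr).
\end{equation}
When $j=i$ the right-hand side equals $\RHom(A,B)$, proving fully faithfulness of each $\Phi_i$; when $j>i$, the twist $j-i$ lies in the acyclic range and the expression vanishes, proving semiorthogonality in the required order.

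The substantial step is generation, for which I would globalize Beilinson's resolution of the diagonal to the projective bundle setting. On $Y=\Pp(\Ee)\times_X\Pp(\Ee)$ with projections $p_1,p_2$, the relative Euler sequence produces a canonical section of $p_1^{\ast}\Oo_\pi(1)\otimes p_2^{\ast}\Omega_\pi(1)$ whose zero locus is precisely the relative diagonal $\Delta\subset Y$; the associated Koszul complex is a locally free resolution of $\Oo_\Delta$ with terms of the shape $p_1^{\ast}\Omega_\pi^i(i)\otimes p_2^{\ast}\Oo_\pi(-i)$ for $0\le i\le r-1$. Given any $F\in \Dd^b(\Pp(\Ee))$, tensoring this resolution with $p_2^{\ast}F$ and applying $Rp_{1\ast}$ recovers $F$ via the projection formula as an iterated extension of objects of the form $\Omega_\pi^i(i)\otimes \pi^{\ast}G_i$, where $G_i=R\pi_{\ast}(F\otimes \Omega_\pi^i(i))[i]\in \Dd^b(X)$. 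Since each $\Omega_\pi^i(i)$ itself belongs to the subcategory generated by $\Oo_\pi(0),\Oo_\pi(-1),\ldots,\Oo_\pi(-i)$, by Koszul-resolving it against the Euler sequence, $F$ lies in the span of the $\Phi_i(\Dd^b(X))$, and generation follows.

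The principal obstacle is the construction and verification of this relative Beilinson resolution: one must identify the correct canonical morphism of bundles, check that its vanishing locus is the diagonal scheme-theoretically, and establish the regularity of the induced section so that the Koszul complex is exact. Once this resolution is in place, the remainder of the proof is formal, reducing everything to the projection formula and the fiberwise vanishing statement for $R\pi_{\ast}\Oo_\pi(k)$ recorded above.
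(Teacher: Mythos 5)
The paper itself gives no argument for Theorem \ref{th:Orvlovth} (it is quoted from Orlov's paper \cite{Or}), and your strategy --- projection formula plus $R\pi_{\ast}\Oo_{\pi}(k)$ for full faithfulness and semiorthogonality, and a relative Beilinson resolution of the diagonal for generation --- is exactly the standard proof. However, two of your assertions are false as written. First, the semiorthogonality check points in the wrong direction: for $j>i$ your displayed group is $\RHom\bigl(A,\ B\otimes R\pi_{\ast}\Oo_{\pi}(j-i)\bigr)$ with $j-i>0$, which equals $\RHom(A,B\otimes S^{j-i}\Ee^{\ast})$ and does \emph{not} vanish in general --- nor does it need to, since it is a Hom from a component on the left of the decomposition to one on its right. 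What must vanish are the Homs in the opposite direction, $\RHom\bigl(\pi^{\ast}A\otimes\Oo_{\pi}(-i),\ \pi^{\ast}B\otimes\Oo_{\pi}(-j)\bigr)$ for $i<j$, which by the same projection-formula computation equal $\RHom\bigl(A,\ B\otimes R\pi_{\ast}\Oo_{\pi}(i-j)\bigr)$ and do vanish because $i-j$ lies in the acyclic range $[-r+1,-1]$. The fix is immediate, but the statement you wrote is not the required one.

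Second, the generation step has a genuine flaw beyond the bookkeeping of the section: the canonical section cutting out the relative diagonal lives in $p_1^{\ast}\T_{\pi}(-1)\otimes p_2^{\ast}\Oo_{\pi}(1)$ (with $\T_{\pi}$ the relative tangent bundle; equivalently swap the factors), not in $p_1^{\ast}\Oo_{\pi}(1)\otimes p_2^{\ast}\Omega_{\pi}(1)$, which has no nonzero sections at all since $R\pi_{\ast}\Omega_{\pi}(1)=0$. More seriously, your final claim that $\Omega^i_{\pi}(i)$ lies in the subcategory generated by $\Oo_{\pi},\Oo_{\pi}(-1),\dots,\Oo_{\pi}(-i)$ is false: the relative Euler sequence resolves $\Omega^i_{\pi}(i)$ by terms $\pi^{\ast}\Lambda^{i-j}\Ee\otimes\Oo_{\pi}(j)$ with \emph{non-negative} twists $0,\dots,i$, and already on $\Pp^2$ one has $\Hom^{\bullet}(\Omega^1(1),\Oo(-2))=H^{1}(\Pp^2,\Omega^1)\neq 0$, so $\Omega^1(1)\notin\langle\Oo(-1),\Oo\rangle$ (membership would force orthogonality to $\Oo(-2)$ in the decomposition $\langle\Oo(-2),\Oo(-1),\Oo\rangle$). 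The standard repair is to distribute the Koszul terms the other way, as $p_1^{\ast}\Oo_{\pi}(-i)\otimes p_2^{\ast}\Omega^i_{\pi}(i)$, tensor with $p_2^{\ast}F$ and push along $p_1$: then $F$ is exhibited as an iterated extension of the objects $\Oo_{\pi}(-i)\otimes\pi^{\ast}R\pi_{\ast}\bigl(F\otimes\Omega^i_{\pi}(i)\bigr)$, which lie directly in the stated components, and no auxiliary claim about $\Omega^i_{\pi}(i)$ is needed. With these corrections your argument becomes the standard proof of the theorem.
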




\comment{
Assume given a full exceptional collection $(E_0,\dots ,E_n)$ in $\D$. Given an object $E\in \D ^{\leq 0}$, one can construct the associated Postnikov system, which is depicted below:  

\begin{figure}[H]
$$
\xymatrix @C1pc @R4pc {
& {\rm gr}_0 E\ar@{^{}->}[dl]  &  &  {\rm gr}_1 E\ar@{^{}->}[dl]  & & {\rm gr}_2 E\ar@{^{}->}[dl] & & &  & {\rm gr}_n E\ar@{^{}->}[dl]\\
E\ar@{^{}->}[rr]  &  &   w_{\geq 1} E \ar@{^{}->}[lu]\ar@{^{}->}[rr]  & & w_{\geq 2} E \ar@{^{}->}[lu]\ar@{^{}->}[rr] & & w_{\geq 3} E \ar@{^{}->}[lu] & \dots \dots \dots & w_{\geq n} E \ar@{^{}->}[rr] & & 0 \ar@{^{}->}[lu]
}
$$
\end{figure}

\vspace*{0.2cm}

The triangles of the Postnikov system are distinguished. The objects ${\rm gr}_k E$ belong to the subcategory $\B _k: =\langle {\bf L}_{\langle E_0,\dots, E_{i-1}\rangle} E_i\rangle$ from Definition \ref{def:left-right_mutation} that is generated by the single exceptional collection, while the objects 
$w_{\geq k} E$ belong to the subcategory $\langle  {\bf L}^nE_n,\dots,  {\bf L}^kE_k\rangle $. These conditions define the distinguished triangles in the above Postnikov system uniquely up to a unique isomorphism. For the elements of the left dual collection one has the following orthogonality relations:

\vspace*{0.2cm}

\begin{itemize}

\vspace*{0.2cm}

\item $\Hom _{\D}^s(E_i,{\bf L}^jE_j[j])=0$ for $s\neq 0$,

\vspace*{0.2cm}

\item $\Hom _{\D}^s(E_i,{\bf L}^jE_j[j])=\delta _{ij}{\sf k}.$

\end{itemize}

\vspace*{0.2cm}

Using the distinguished triangles from the above Postnikov system and the orthogonality relations, one obtains an isomorphism:

\vspace*{0.2cm}

\begin{equation}
\Hom _{\D}^s(E_i,X) = \Hom _{\D}^s(E_i,{\rm gr}_i X) \qquad {\rm for \ all} \quad s.
\end{equation}

\vspace*{0.2cm}

Thus, ${\rm gr}_i X\in \D _i^{\leq 0}$ in the induced $t$--structure on $\D _i$. It is sufficient to compute the weight of ${\rm gr}_i X$; from the orthogonality relations it follows that ${\bf L}^iE_j[i]
\in \D ^{\leq 0}\cap \D ^{\geq 0}$.

\begin{theorem}\cite[Lemma 2.5]{BP}\label{th:mutations_blocks}
Suppose $\mathbb E = (\mathbb E _1,\dots ,\mathbb E _d)$ s a full $d$--block collection in $\D$ and take $1<i\leq d$. Suppose $\D$ is is equipped with a $t$--structure that is preserved by the autoequivalence ${\mathbb S}_{\D}[1-n]$. Then 

\vspace*{0.2cm}

\begin{itemize}

\vspace*{0.2cm}

\item $\mathbb E$ pure implies $\sigma _i(\mathbb E)$ pure.

\vspace*{0.2cm}

\item $\mathbb E$ pure implies $\sigma _i(\mathbb E)$ strong.

\end{itemize}

\vspace*{0.2cm}
\end{theorem}
}



\subsection{Dual exceptional collections}


\vspace*{0.2cm}

\begin{definition}\label{def:left-right_dual_exceptional_collections}
Let $X$ be a smooth variety, and assume given an exceptional collection $(E_0,\dots,E_n)$ in $\Dd ^{b}(X)$. The right dual exceptional collection $(F_n,\dots, F_0)$ to $(E_0,\dots,E_n)$ is defined as 

\vspace*{0.2cm}

\begin{equation}
F_i: = {\bf R}_{\langle E_{i+1},\dots,E_{n} \rangle}E_i, \quad {\rm for} \quad 1\leq i\leq n.
\end{equation}

\vspace*{0.2cm}

The left dual exceptional collection $(G_n,\dots, G_0)$ to $(E_0,\dots,E_n)$ is defined as 

\vspace*{0.2cm}

\begin{equation}
G_i: = {\bf L}_{\langle E_{1},\dots,E_{i-1} \rangle}E_i, \quad {\rm for} \quad 1\leq i\leq n.
\end{equation}

\vspace*{0.2cm}

\end{definition}

\begin{proposition}\cite[Proposition 2.15]{Efim}\label{prop:dual_exc_coll_characterization}
Let $(E_0,\dots,E_n)$ be a semiorthogonal decomposition in a triangulated category $\D$. The left dual exceptional collection $\langle F_n, \dots, F_0\rangle$ is uniquely determined by the following property:

\vspace*{0.2cm}

\begin{equation}
\Hom ^l_{\D}(E_i,F _j) = \left\{
   \begin{array}{l}
    {\sf k}, \quad {\rm for} \quad l=0, \  i=j,\\
    0, \quad {\rm otherwise}. \\
   \end{array}
  \right.
\end{equation}

\vspace*{0.2cm}

Similarly, the right dual exceptional collection $\langle G _n,\dots, G _0\rangle $ is uniquely determined by the following property:

\vspace*{0.2cm}

\begin{equation}
\Hom ^l_{\D}(G_i,E_j) = \left\{
   \begin{array}{l}
    {\sf k}, \quad {\rm for} \quad l=0, \  i=j,\\
    0, \quad {\rm otherwise}. \\
   \end{array}
  \right.
\end{equation}

\vspace*{0.2cm}

\end{proposition}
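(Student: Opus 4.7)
The plan is to verify directly that the collection $(F_n,\dots,F_0)$ constructed by iterated left mutations satisfies the stated Hom-characterization, and then to deduce uniqueness from the fullness of $(E_0,\dots,E_n)$ via a standard cone argument. The right-dual statement is strictly analogous and follows by reversing the direction of all mutations and morphisms, so I concentrate on the left-dual case.

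For the orthogonality, fix $j$ and consider the distinguished triangle defining the left mutation,
\begin{equation*}
K_j \longrightarrow E_j \longrightarrow F_j,
\end{equation*}
with $K_j\in \langle E_0,\dots,E_{j-1}\rangle$ and $F_j\in \langle E_0,\dots,E_{j-1}\rangle^{\perp}$. The case $i<j$ is built into the definition, giving $\Hom^\bullet_{\D}(E_i,F_j)=0$. For $i\geq j$ I apply $\Hom^\bullet_{\D}(E_i,-)$ to the triangle. The object $K_j$ is a finite iterated extension of shifts of $E_0,\dots,E_{j-1}$ (its Postnikov filtration from Definition~\ref{def:semdecomposition-filtration} has graded pieces of the form $E_l\otimes V_l^{\bullet}$), and since $\Hom^\bullet_{\D}(E_i,E_l)=0$ for all $l<i$ by exceptionality, a routine induction along the filtration yields $\Hom^\bullet_{\D}(E_i,K_j)=0$ once $i\geq j$. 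The long exact sequence of the triangle then collapses to $\Hom^\bullet_{\D}(E_i,F_j)\cong \Hom^\bullet_{\D}(E_i,E_j)$, which equals $\mathsf{k}$ in degree zero when $i=j$ and vanishes when $i>j$.

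For uniqueness, let $F'_j$ be any object satisfying the same Hom-pairing. The identification $\Hom^0_{\D}(E_j,F'_j)=\mathsf{k}$ singles out a canonical nonzero morphism $\phi\colon E_j\to F'_j$, well-defined up to scalar. The hypothesis $\Hom^\bullet_{\D}(E_i,F'_j)=0$ for $i<j$ propagates to $\Hom^\bullet_{\D}(K_j,F'_j)=0$, so the composition $K_j\to E_j \xrightarrow{\phi} F'_j$ vanishes; hence $\phi$ factors through the cone $F_j$, producing a morphism $\tilde\phi\colon F_j\to F'_j$. Applying $\Hom^\bullet_{\D}(E_i,-)$ for each $i$, the matching Hom-pairings on the two sides force $\tilde\phi$ to induce an isomorphism on every such graded group: both vanish for $i\neq j$, while for $i=j$ the induced map between one-dimensional spaces is nonzero because its precomposition with $E_j\to F_j$ recovers $\phi\neq 0$. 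The cone of $\tilde\phi$ therefore lies in $\langle E_0,\dots,E_n\rangle^{\perp}$, which is zero by fullness, so $\tilde\phi$ is an isomorphism and $F'_j\cong F_j$. The main delicate point is the vanishing $\Hom^\bullet_{\D}(E_i,K_j)=0$ for $i\geq j$ in the existence step, requiring one to handle the Postnikov filtration of $K_j$ termwise; once this is in place, everything else is a formal consequence of the mutation triangle and the detecting property of a full exceptional collection.
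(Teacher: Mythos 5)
Your proof is correct: the paper itself gives no argument for this statement (it is quoted from Efimov's Proposition 2.15), and your existence-plus-uniqueness argument via the mutation triangle $K_j\rightarrow E_j\rightarrow F_j$ is exactly the standard proof given in that reference, consistent with the paper's shift conventions in (\ref{eq:left-right_mutations_of_an_object}). The only simplification worth noting is that the vanishing $\Hom^{\bullet}_{\D}(E_i,K_j)=0$ for $i\geq j$ needs no Postnikov filtration: the full subcategory of objects right-orthogonal to $E_i$ is triangulated and contains $E_0,\dots,E_{j-1}$, hence contains $\langle E_0,\dots,E_{j-1}\rangle\ni K_j$.
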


\vspace*{0.5cm}


\section{Resolutions of the diagonal and the decomposition of ${\sf F}_{\ast}\Oo _X$}\label{sec:res_of_Delta}


\vspace*{0.5cm}
Let $X$ be a smooth variety. Given two (admissible) subcategories $\A$ and $\B$ of $\Dd ^b(X)$, define $\A \boxtimes \B\subset \Dd ^b(X)$ to be the minimal triangulated subcategory of $\Dd ^b(X)$ that contains all the objects $(A\boxtimes B| A\in \A, B\in \B)$. The results of \cite{Kuzbasechange} on base change for semiorthogonal decompositions imply:

\begin{theorem}\label{th:Kuzbasechange}
Let $X$ be as above, and assume given a semiorthogonal decomposition $\langle \A _1,\dots, \A _m\rangle$ of $\Dd ^b(X)$. Let $\langle \C _m,\dots ,\C_1\rangle$ be the right dual semiorthogonal decomposition of $\Dd ^b(X)$ as in Definition \ref{def:left_and_right_dual_decompositions}. 
Then the structure sheaf of the diagonal  $\Delta _{\ast}\Oo _X$ admits (cf. Definition \ref{def:semdecomposition-filtration}) a decomposition $0=D_m\rightarrow D_{m-1}\rightarrow \dots \rightarrow D_1\rightarrow D_0=\Delta _{\ast}\Oo _X$ in $\Dd ^b(X\times X)$, such that ${\rm Cone}(D_i\rightarrow D_{i-1})\in \A _i\boxtimes \C _i^{\vee}\subset \Dd ^b(X\times X)$.
\end{theorem}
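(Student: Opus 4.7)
The plan is to combine Kuznetsov's base change theorem for semiorthogonal decompositions from \cite{Kuzbasechange} with the fact that $\Delta _{\ast}\Oo _X$ is the Fourier--Mukai kernel of the identity endofunctor of $\Dd ^b(X)$. First I would use base change to lift the given decomposition to the first factor of $X\times X$, obtaining
\[
\Dd ^b(X\times X) = \langle \A _1\boxtimes \Dd ^b(X),\dots, \A _m\boxtimes \Dd ^b(X)\rangle ,
\]
and then apply Definition \ref{def:semdecomposition-filtration} to the object $\Delta _{\ast}\Oo _X$ to produce a chain $0=D_m\to \dots \to D_0=\Delta _{\ast}\Oo _X$ whose successive cones $K_i:={\rm Cone}(D_i\to D_{i-1})$ lie a priori only in the coarser subcategories $\A _i\boxtimes \Dd ^b(X)$.

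The substantive step is to refine this containment to $K_i\in \A _i\boxtimes \C _i^{\vee}$. I would interpret $K_i$ as the Fourier--Mukai kernel of the projection functor $\pi _i\colon \Dd ^b(X)\to \A _i$ associated with the decomposition $\langle \A _1,\dots,\A _m\rangle$, treating the first copy of $X$ as the target of $\pi _i$ and the second as the source. The universal characterization of the right dual decomposition $\langle \C _m,\dots,\C _1\rangle$, which is the general-subcategory analogue of Proposition \ref{prop:dual_exc_coll_characterization}, says that for $T\in \Dd ^b(X)$ the image $\pi _i(T)$ is controlled by the pairings of $T$ against objects of $\C _i$. Translating this description into Fourier--Mukai language identifies the source-side factor of $K_i$ with objects dual to those of $\C _i$, which places $K_i$ inside $\A _i\boxtimes \C _i^{\vee}$.

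The main obstacle is precisely this refinement of the containment. One has to track how the mutation equivalences $\C _i\simeq \A _i$ and the defining orthogonality relations of the dual decomposition are encoded at the level of kernels on $X\times X$, and verify that combining the base change along the first factor with the dual description along the second factor really does cut out the smaller subcategory $\A _i\boxtimes \C _i^{\vee}$. A careful application of the base change formalism of \cite{Kuzbasechange} — in particular its compatibility with passing to left and right duals of admissible subcategories — should settle this; the remaining reasoning is the formal Postnikov bookkeeping furnished by Definition \ref{def:semdecomposition-filtration}.
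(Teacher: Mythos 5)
The paper does not actually prove Theorem \ref{th:Kuzbasechange}: it is stated as a consequence of \cite{Kuzbasechange} (the preliminary sections are imported without proofs from \cite{FrobflagvarI}), so your outline can only be measured against Kuznetsov's argument, and in outline you do follow that route. Base-changing the decomposition along one projection of $X\times X$, filtering $\Delta_{\ast}\Oo_X$ with respect to $\langle \A_1\boxtimes \Dd^b(X),\dots,\A_m\boxtimes \Dd^b(X)\rangle$, and reading the graded pieces $K_i$ as kernels of the projection functors $\pi_i$ is the correct and intended strategy; with the convention that the first factor is the target, this matches the paper's use of the theorem.

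The gap is precisely the step you flag as the main obstacle, and your sketch does not close it. From ``$\pi_i(T)$ is controlled by pairings with $\C_i$'' you infer that ``the source-side factor of $K_i$ consists of duals of objects of $\C_i$'', but a description of the functor $\Phi_{K_i}$ does not by itself locate the kernel object inside $\A_i\boxtimes\C_i^{\vee}$ --- a priori $K_i$ only lies in $\A_i\boxtimes\Dd^b(X)$, and this refinement is the whole content of the theorem. The honest way to get it is to bring in the second factor explicitly: dualization reverses the right dual decomposition to $\Dd^b(X)=\langle \C_1^{\vee},\dots,\C_m^{\vee}\rangle$, base change along the second projection gives a second semiorthogonal decomposition of $\Dd^b(X\times X)$ compatible with the first, hence a grid of components $\A_i\boxtimes\C_j^{\vee}$; one then decomposes $K_i$ in this grid and shows every cross term with $j\neq i$ vanishes. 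That vanishing uses the subcategory analogue of Proposition \ref{prop:dual_exc_coll_characterization}, namely $\Hom^{\bullet}(\C_j,\A_l)=0$ for $l\neq j$, the fact that $\Phi_{K_i}$ annihilates $\A_j$ for $j\neq i$, and a nondegeneracy statement coming from the mutation equivalence $\C_j\simeq\A_j$ ensuring that an object of $\A_i\boxtimes\C_j^{\vee}$ whose induced functor kills $\A_j$ is itself zero. None of this is an automatic ``compatibility of base change with duals''; it is the substantive part of the argument, and as written your proposal asserts it rather than proves it.
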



\comment{
\vspace*{0.2cm}

\begin{eqnarray}
& \Ext ^{\ast}(\G \boxtimes \A _j^{\vee},P_i) = \Ext ^{\ast}(p_1^{\ast}\G \otimes p_2^{\ast}\A _j^{\vee},P_i) =  \Ext ^{\ast}(p_1^{\ast}\G ,P_i\otimes p_2^{\ast}\ A _j) = \\
& \Ext ^{\ast} (\G ,{p_1}_{\ast}(P_i\otimes p_2^{\ast}\A _j)) = : \Phi _{P_i}(\A _j). \nonumber
\end{eqnarray}

\vspace*{0.2cm}
}


\begin{corollary}\label{cor:resolution_of_Delta}
Let $X$ be a smooth projective variety, and $(\Ee _0,\dots ,\Ee _m)$ be a full exceptional collection in $\Dd ^b(X)$ with $(\Ff _m, \dots, \Ff _1)$ being its right dual. Then the structure sheaf of the diagonal  $\Delta _{\ast}\Oo _X$ admits  a decomposition $0=D_{m+1}\rightarrow D_m\rightarrow \dots \rightarrow D_1\rightarrow D_0=\Delta _{\ast}\Oo _X$ in $\Dd ^b(X\times X)$, such that a cone of each morphism $D _i\rightarrow D_{i-1}$ is quasiisomorphic to $\Ee _i\boxtimes \Ff  _i^{\vee}$, where $\Ff  _i^{\vee}={\mathcal RHom}(\Ff  _i,\Oo _X)$ is the dual object.

In particular, for any object $\G $ of $\Dd ^b(X)$ there is a spectral sequence 

\vspace*{0.2cm}

\begin{equation}\label{eq:ExcCollSpecSeq}
{\rm E}_1^{p,q}: = {\mathbb H}^{p+q}(X,\G\otimes  \Ff _p^{\vee})\otimes  \Ee _p \Rightarrow \G .
\end{equation}

\vspace*{0.2cm}
\end{corollary}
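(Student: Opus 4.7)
The plan is to deduce both assertions from Theorem \ref{th:Kuzbasechange} applied to the semiorthogonal decomposition generated by the exceptional collection, followed by the standard Fourier--Mukai calculus on $X\times X$.

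First, view the full exceptional collection $(\Ee_0,\dots,\Ee_m)$ as a semiorthogonal decomposition $\Dd^b(X)=\langle \A_0,\dots,\A_m\rangle$ with $\A_i=\langle \Ee_i\rangle\simeq \Dd^b({\sf Vect}\text{-}{\sf k})$. By Definition \ref{def:left_and_right_dual_decompositions} the right dual decomposition is $\langle \C_m,\dots,\C_0\rangle$ with $\C_i=\langle \Ff_i\rangle$, where $(\Ff_m,\dots,\Ff_0)$ is the right dual exceptional collection (so the indexing in the statement should read $(\Ff_m,\dots,\Ff_0)$). Theorem \ref{th:Kuzbasechange} then provides a filtration
\[
0=D_{m+1}\to D_m\to \dots \to D_0 = \Delta_{\ast}\Oo_X
\]
in $\Dd^b(X\times X)$ whose successive cones lie in $\A_i\boxtimes \C_i^{\vee}$. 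Since $\A_i$ is generated by the single exceptional object $\Ee_i$ and $\C_i^{\vee}$ by $\Ff_i^{\vee}$, the subcategory $\A_i\boxtimes \C_i^{\vee}$ is generated by the single object $\Ee_i\boxtimes \Ff_i^{\vee}$. The normalization $\Hom^{\bullet}(\Ee_i,\Ff_i)={\sf k}$ from Proposition \ref{prop:dual_exc_coll_characterization} pins down each graded piece to be $\Ee_i\boxtimes \Ff_i^{\vee}$ (without any shift or multiplicity), which gives the first claim.

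For the spectral sequence, recall the Fourier--Mukai identity $\G \simeq Rp_{1\ast}(p_2^{\ast}\G \otimes \Delta_{\ast}\Oo_X)$ obtained from the projection formula for the closed embedding $\Delta$. Apply the functor $Rp_{1\ast}(p_2^{\ast}\G\otimes -)$ to the filtration of $\Delta_{\ast}\Oo_X$. By the projection formula and flat base change,
\[
Rp_{1\ast}\bigl(p_2^{\ast}\G \otimes (p_1^{\ast}\Ee_i\otimes p_2^{\ast}\Ff_i^{\vee})\bigr) \;\simeq\; \Ee_i \otimes_{\sf k} R\Gamma\bigl(X,\G\otimes \Ff_i^{\vee}\bigr),
\]
so the image of the filtration is a filtration of $\G$ in $\Dd^b(X)$ whose $p$-th graded piece is $\Ee_p\otimes R\Gamma(X,\G\otimes \Ff_p^{\vee})$. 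Passing to the associated spectral sequence of a filtered complex (equivalently, unwinding the exact couple built from the distinguished triangles $D_{p+1}\to D_p\to \Ee_p\boxtimes \Ff_p^{\vee}$) yields exactly the $E_1$-page (\ref{eq:ExcCollSpecSeq}) converging to $\G$.

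The only non-routine step is the identification of the graded pieces up to a shift: one needs that $\A_i\boxtimes \C_i^{\vee}$ really does consist of direct sums of shifts of the single generator $\Ee_i\boxtimes \Ff_i^{\vee}$, and that the actual shift occurring in the filtration produced by Theorem \ref{th:Kuzbasechange} is zero. The shift can be read off by computing $\Hom^{\bullet}$ from $\Ee_j\boxtimes \Oo_X$ against the filtration and matching the outcome with the biorthogonality of Proposition \ref{prop:dual_exc_coll_characterization}; all other steps are formal consequences of the projection formula and base change.
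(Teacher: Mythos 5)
Your proposal is correct and follows the same route the paper intends: the corollary is obtained by specializing Theorem \ref{th:Kuzbasechange} to the semiorthogonal decomposition generated by the exceptional collection (with each component generated by a single object, so the cones become $\Ee_i\boxtimes\Ff_i^{\vee}$), and the spectral sequence (\ref{eq:ExcCollSpecSeq}) then follows by convolving the filtration of $\Delta_{\ast}\Oo_X$ with $\G$ via the projection formula, exactly as you do. Your remark on the indexing of the dual collection and on pinning down the shift via Proposition \ref{prop:dual_exc_coll_characterization} is a sensible tightening of details the paper leaves implicit.
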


Assembling together all the previous definitions and statements, we immediately obtain:

\begin{theorem}\label{th:Frobdecomposclaim}
Let $X$ be a smooth variety of dimension $d-1$ over a $\sf k$ of characteristic $p$. Fix an $m\geq 1$ and consider the $m$--th Frobenius morphism ${\sf F}_m$. Assume given a $d$--block (cf. Definition \ref{def:d-block_collection}) full exceptional collection ${\mathbb E} = (\mathbb E _{-d+1},\dots ,\mathbb E _0)$ in $\Dd ^b(X)$ consisting of  coherent sheaves.  Furthermore, assume that for any exceptional object $\Ee \in \mathbb E _i$ and $-d+1\leq i\leq 0$, one has 
${\rm H}^j(X,{\sf F}_m^{\ast}\Ee )=0$ for $j\neq -i$. Denote ${\mathbb G}=(\mathbb G _0,\dots ,\mathbb G _{-d+1})$ the right dual collection. Then 

\vspace*{0.2cm}

\begin{itemize}

\vspace*{0.2cm}

\item[(1)] The right dual collection ${\mathbb G}=(\mathbb G _0,\dots ,\mathbb G _{-d+1})$ is a  $d$--block full exceptional collection.

\vspace*{0.2cm}

\item[(2)] For an exceptional vector bundle $\G \in {\mathbb G}_i$ the corresponding shift is equal to $-i$.

\vspace*{0.2cm}

\item [(3)]There is a decomposition of the bundle ${{\sf F}_n}_{\ast}\Oo _X$ into the direct sum:

\vspace*{0.2cm}

\begin{equation}\label{eq:Decomposition_of_F_*Oo_X}
{{\sf F}_n}_{\ast}\Oo _X = \bigoplus _{i=1}^{i=d} \bigoplus _{\Ee \in \mathbb E _i,  \G \in \mathbb G_{i-d}}{\rm H}^i(X,{\sf F}_n^{\ast}\Ee)\otimes \G^{\vee},
\end{equation}

\vspace*{0.2cm}

and in the inner sum of (\ref{eq:Decomposition_of_F_*Oo_X}) $\G$ is the right dual object for $\Ee$ as in Definition \ref{def:left-right_dual_exceptional_collections}.

\vspace*{0.2cm}

\item [(4)] The terms of ${\mathbb G}$ are, up to a shift, vector bundles on $X$.

\vspace*{0.2cm}

\item [(5)] Conversely, assume given a decomposition of ${{\sf F}_n}_{\ast}\Oo _X$ into the direct sum of vector bundles that form a full exceptional collection ${\mathbb G}=(\mathbb G _{-d+1},\dots ,\mathbb G _0)$ in $\Dd ^b(X)$. Then the multiplicity space at $\G \in \mathbb G _{i}$ is given by ${\rm H}^i(X,{\sf F}_n^{\ast}\Ee)$, where $\Ee$ is the right dual object for $\G$.

\end{itemize}

\end{theorem}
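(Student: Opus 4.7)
My plan is to combine the three pillars of the preceding sections: block mutations of exceptional collections (Theorem \ref{th:mutations_of_block_collections}), the diagonal resolution (Corollary \ref{cor:resolution_of_Delta}), and the orthogonality characterization of dual collections (Proposition \ref{prop:dual_exc_coll_characterization}). The five parts then fall out in a natural order.

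First, for (1), (2) and (4), I would build $\mathbb{G}$ from $\mathbb{E}$ following Definition \ref{def:left_and_right_dual_decompositions}, realizing each $\mathbb{G}_i={\bf R}_{\langle\mathbb{E}_{i+1},\dots,\mathbb{E}_0\rangle}\mathbb{E}_i$ as an iterated right block-mutation through its successors. At each step one invokes the right analogue of Theorem \ref{th:mutations_of_block_collections}; the Serre-functor hypothesis $\mathbb{S}_{\Dd^b(X)}[1-d]=-\otimes\omega_X$ is automatic for $X$ smooth projective of dimension $d-1$, so purity and strongness propagate, and the correction shifts built into the block mutation operators (cf.\ Remark \ref{rem:shift_in_mutation}) ensure that each block $\mathbb{G}_i$ is a block of vector bundles placed in cohomological degree $-i$. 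This simultaneously proves (1), (2) and (4).

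Next, for (3), I would feed the Postnikov filtration of $\Delta_{\ast}\Oo_X$ provided by Corollary \ref{cor:resolution_of_Delta}, whose graded pieces are $\Ee\boxtimes\G^\vee$ for dual pairs $(\Ee,\G)$ with $\Ee\in\mathbb{E}_i$ and $\G\in\mathbb{G}_i$, through the Fourier--Mukai transform and evaluate the induced filtration on ${\sf F}_{m\ast}\Oo_X$. The projection formula (valid because ${\sf F}_m$ is affine) rewrites each graded piece as
\[
R\Gamma(X,{\sf F}_{m\ast}\Oo_X\otimes\Ee)\otimes\G^\vee \;=\; R\Gamma(X,{\sf F}_m^{\ast}\Ee)\otimes\G^\vee,
\]
and the vanishing hypothesis concentrates $R\Gamma(X,{\sf F}_m^{\ast}\Ee)$ in a single cohomological degree which, together with the shift of $\G^\vee$ dictated by (2), produces an honest coherent sheaf in degree zero. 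It then remains to split the filtration into a direct sum: the block semiorthogonality of $\mathbb{G}$ forces $\Ext^1$ between any two distinct graded pieces to vanish in the direction of the filtration, so the connecting triangles collapse and one recovers the formula (\ref{eq:Decomposition_of_F_*Oo_X}).

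Finally, for (5), given any direct-sum decomposition ${\sf F}_{m\ast}\Oo_X=\bigoplus_{\G}V_\G\otimes\G$ along a full exceptional collection of vector bundles, I would extract the multiplicity $V_\G$ by pairing with the right-dual partner $\Ee\in\mathbb{E}_i$: Proposition \ref{prop:dual_exc_coll_characterization} kills every summand except $\G$ itself, and the projection formula for ${\sf F}_m$ identifies the surviving space with $H^i(X,{\sf F}_m^{\ast}\Ee)$. The principal obstacle is the splitting step in (3)---upgrading the iterated Postnikov filtration of ${\sf F}_{m\ast}\Oo_X$ into a genuine direct sum. This is precisely where the block structure (supplying the $\Ext^1$-vanishings in the correct direction) and the Frobenius cohomology hypothesis (synchronising all graded pieces in cohomological degree zero) must cooperate; without either ingredient the filtration would in general remain a nontrivial extension.
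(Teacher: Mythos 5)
Your overall assembly is the one the paper intends (the paper itself offers nothing beyond ``assembling together all the previous statements''): Theorem \ref{th:mutations_of_block_collections} for (1), (2), (4), the filtration of the diagonal from Theorem \ref{th:Kuzbasechange}/Corollary \ref{cor:resolution_of_Delta} together with the projection formula for (3), and Proposition \ref{prop:dual_exc_coll_characterization} for (5); the degree bookkeeping via the Frobenius vanishing hypothesis is also correct. The problem is the justification you give at the one step you yourself flag as the principal obstacle, the splitting in (3), and there the argument as stated is wrong in direction. In the Postnikov system obtained by transforming the diagonal filtration against ${\sf F}_{m\ast}\Oo _X$, the connecting map at each stage is a class in $\Hom (A_k,X_k[1])$, where $A_k$ is the graded piece being attached and $X_k$ is the already assembled iterated extension of the graded pieces coming from the \emph{later} blocks. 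Semiorthogonality (block or not) kills $\Hom ^{\bullet}$ from later terms to earlier terms, i.e.\ exactly the opposite Hom's, and says nothing about these extension classes; if semiorthogonality alone collapsed the connecting triangles, the same reasoning applied before transforming would show that $\Delta _{\ast}\Oo _X$ itself splits as $\bigoplus \Ee _i\boxtimes \G _i^{\vee}$, which is false in general.

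What actually kills the connecting classes is strongness, not semiorthogonality: after your degree bookkeeping every graded piece is a vector bundle placed in degree $0$, and the (shift-corrected) right dual collection is a \emph{strong} exceptional collection of sheaves, so that $\Ext ^{\geq 1}(\G ,\G ')=0$ for every pair of its terms --- one direction by exceptionality, the Hom-direction by the ``pure implies strong'' half of Theorem \ref{th:mutations_of_block_collections} applied to the iterated block mutations that produce $\mathbb G$ (this is precisely where purity of $\mathbb E$ and the condition on ${\mathbb S}_{\D}[1-d]=-\otimes \omega _X$ are used). You invoke this theorem in your first paragraph for purity but then do not use its strongness output at the crucial moment, attributing the $\Ext ^1$-vanishing instead to ``block semiorthogonality in the direction of the filtration'', which is not available. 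With the substitution $\Ext ^1(\G _j,\G _k)=0$ for all $j,k$, an induction along the filtration gives $\Hom (A_k,X_k[1])=0$ at every stage, the triangles split, and (3) --- and with it the rest of your outline --- goes through.
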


\vspace*{0.5cm}

\section{The incidence varieties in types ${\bf A}_3$ and ${\bf A}_4$}\label{sec:adjoint_var_rank3}

\vspace*{0.5cm}

Recall some notation from \cite[Section 6.1]{FrobflagvarI}.

\begin{definition}\label{def:Psi_bundles}
Given a semisimple algebraic group ${\bf G}$ and a dominant weight $\omega$ of $\bf G$, for $1\leq i\leq l$, where $l={\rm dim} (\nabla _{\omega})$ the bundle $\Psi ^{\omega}_i$ is set to be the pull--back of $\Omega _{\Pp (\nabla _{\omega})}^i(i)$ along the morphism ${\bf G}/{\bf B}\rightarrow \Pp (\nabla _{\omega})$ defined by a (semi)--ample line bundle $\Ll _{\omega}$. 
\comment{
The latter morphism factors through the projection $\pi _k: {\bf G}/{\bf B}\rightarrow {\bf G}/{\bf P}_{\hat \alpha _k}$ and the embedding ${\bf G}/{\bf P}_{\hat \alpha}\hookrightarrow \Pp (\nabla _{\omega _k})$, where ${\bf P}_{\hat \alpha _k}$ is the maximal parabolic subgroup of $\bf G$ obtained by adjoining all but one the simple roots to $\bf B$; the deleted simple root is the one with $\langle \omega _k , \alpha _k^{\vee}\rangle =1$.
}
\end{definition}

By definition, the bundles $\Psi _1^{\omega}$ fit into short exact sequences:

\vspace*{0.2cm}

\begin{equation}\label{eq:defining_sequence_for_Psi}
0\rightarrow \Psi _1^{\omega}\rightarrow \nabla _{\omega}\otimes \Oo _{{\bf G}/{\bf B}}\rightarrow 
\Ll _{\omega}\rightarrow 0,
\end{equation}

\vspace*{0.2cm}


Given a vector space $\sf V$ of dimension $n$ over $\sf k$, the incidence variety $X_n$\footnote{Also called the adjoint variety as being isomorphic to the orbit of the highest weight vector in the adjoint representation of ${\bf SL}_n$.} is defined to be the variety of partial flags of type $(1,n-1)$ in $\sf V$. It is a partial flag variety of the group ${\bf SL}_n$ with the Picard group isomorphic to ${\mathbb Z}^2$. Its canonical sheaf $\omega _{X_n}$ is isomorphic to $\Ll _{-(n-1)(\omega _1+\omega _{n-1})}$, where $\omega _1,\omega _2,\dots, \omega _{n-1}$ are the fundamental weights of ${\bf SL}_n$.

In this section we work out in detail the groups ${\bf SL}_4$ and ${\bf SL}_5$.
Thus, starting with ${\bf SL}_4$, denote $\omega _1,\omega _2,\omega _3$ the fundamental 
weights, and let $\alpha _1,\alpha _2,\alpha _3$ be the simple roots. For each simple root $\alpha _i$ let ${\bf P}_{\hat \alpha _i}\supset \bf B$ denote the corresponding minimal parabolic subgroup. The homogeneous spaces ${\bf SL}_4/{\bf P}_{\hat \alpha _i}$ can then be identified 
with varieties of partial flags $0\subset {\sf V}_1\subset {\sf V}_{i-1}\subset {\sf V}_{i+1}\subset {\sf V}$. There are the tautological bundles $\Uu _i$ for $i=1,2,3$ on ${\bf SL}_4/{\bf B}$.

The homogeneous space ${\bf SL}_4/{\bf P}_{\hat \alpha _2}$ can be identified with the partial flag variety ${\rm F}_{1,3,4}=: X_4$. 
The line bundles $\Ll _{\omega _1},\Ll _{\omega _3}$ and $\Ll _{\omega _1+\omega _3}$ on $X_4$ give rise to morphisms to $\Pp (\nabla _{\omega _1}),\Pp (\nabla _{\omega _3})$, and $\Pp (\nabla _{\omega _1+\omega _3})$, respectively. As in (\ref{eq:defining_sequence_for_Psi}), 
related to these morphisms are the short exact sequences:

\vspace*{0.2cm}

\begin{equation}\label{eq:Psi_omega_1}
0\rightarrow \Psi _1^{\omega _1}\rightarrow \nabla _{\omega _1}\otimes \Oo \rightarrow \Ll _{\omega _1}\rightarrow 0,
\end{equation}

\vspace*{0.2cm}

\begin{equation}\label{eq:Psi_omega_3}
0\rightarrow \Psi _1^{\omega _3}\rightarrow \nabla _{\omega _3}\otimes \Oo \rightarrow \Ll _{\omega _3}\rightarrow 0,
\end{equation}

\vspace*{0.2cm}

and 

\vspace*{0.2cm}

\begin{equation}\label{eq:Psi_omega_{1+3}}
0\rightarrow \Psi _1^{\omega _1+\omega _3}\rightarrow \nabla _{\omega _1+\omega _3}\otimes \Oo \rightarrow \Ll _{\omega _1+\omega _3}\rightarrow 0.
\end{equation}

\vspace*{0.2cm}

Further, denote $\Ee$ the quotient bundle $\Uu _3/\Uu _1$. Identifying 
the bundle $\Uu _3$ with $\Psi _1^{\omega _3}$ and $\Uu _1$ with $\Ll _{-\omega _1}$, we obtain a short exact sequence:

\vspace*{0.2cm}

\begin{equation}\label{eq:defseqforE}
0\rightarrow \Ll _{-\omega _1}\rightarrow \Psi _1^{\omega _3}\rightarrow \Ee \rightarrow 0.
\end{equation}

\vspace*{0.2cm}

Denoting $\pi:{\bf SL}_4/{\bf B}\rightarrow X_4$ the projection, one also obtains the bundle $\Ee$ as an extension:

\vspace*{0.2cm}

\begin{equation}\label{eq:seqforEonSL_4/B}
0\rightarrow \Ll _{\omega _1-\omega _2}\rightarrow \pi ^{\ast}\Ee \rightarrow \Ll _{\omega _2-\omega _3}\rightarrow 0.
\end{equation}

\vspace*{0.2cm}

We first going to produce a semiorthogonal decomposition of $\Dd ^b(X_4)$ that will satisfy the conditions of Theorem \ref{th:Frobdecomposclaim}. To this end, consider the semiorthogonal decomposition of $\Dd ^b(X_4) = \langle {\tilde \A}_{-1}, {\tilde \A}_0, \tilde \A , \A _0, \A _1,\A _2 \rangle$ \ given by the following block structure (the fact that it is indeed a semiorthogonal decomposition will be proven in Lemma \ref{lem:F-collection_adjoint_variety_SL_4/B} below):

\begin{figure}[H]
$$
\xymatrix{
&{\tilde \A}_{-1}& {\tilde \A}_0& {\tilde \A} & \A _0& \A _1& \A _2 & \\
& || & || & || & || & || & || & \\
&*++<10pt>[F]\txt{$\Ll _{-2\omega _1-\omega _3}$ \\ \\  $\Ll _{-\omega _1-2\omega _3}$}
&*++<10pt>[F]\txt{$\Ll _{-\omega _1-\omega _3}$}
&*++<10pt>[F]\txt{$\Ll _{-\omega _1}$  \\ \\  $\Ee \otimes \Ll _{-\omega _1}$ \\ \\ $\Ll _{-\omega _3} $}
 &*++<10pt>[F]\txt{$\Oo _{X_4}$}
 &*++<10pt>[F]\txt{$\Ll _{\omega _1}$ \\ \\ $\Ll _{\omega _3}$}
 &*++<10pt>[F]\txt{$\Ll _{2\omega _1}$  \\ \\ $\Ll _{\omega _1+\omega _3}$ \\ \\ $\Ll _{2\omega _3}$}
}
$$
 \end{figure}

\vspace{0.2cm}

We then consequently mutate the blocks $\A _1$ (resp., $\A _2$) to the left through the block $\A_ 0$ (resp., through the subcategory $\langle \A _0,\A_1\rangle$), while mutating the block ${\tilde \A}_{-1}$ to the right through ${\tilde \A}_0$ and leaving the block $\tilde \A$ intact to obtain the following decomposition:

\vspace{0.2cm}

\begin{figure}[H]
\begin{equation}\label{eq:F-decomposition_on_X_4}
\xymatrix{
&\C _{-5}& \C _{-4} & \C _{-3} & \C _{-2}& \C _{-1}& \C _0& \\
& || & || & || & || & || & || & \\
&*++<5pt>[F]\txt{$\Ll _{-\omega _1-\omega _3}$}
&*++<8pt>[F]\txt{$(\Psi _1^{\omega _1})^{\ast}\otimes \Ll _{-\omega _1-\omega _3} $ \\ \\  $(\Psi _1^{\omega _3})^{\ast}\otimes \Ll _{-\omega _1-\omega _3}$}
&*++<6pt>[F]\txt{$\Ll _{-\omega _1}$  \\ \\  $\Ee \otimes \Ll _{-\omega _1}$ \\ \\ $\Ll _{-\omega _3}$}
&*++<8pt>[F]\txt{$\Psi _2^{\omega _1}$  \\ \\  $\Psi _2^{{\omega _1,\omega _3}}$ \\ \\ $\Psi _2^{\omega _3}$}
 &*++<8pt>[F]\txt{$\Psi _1^{\omega _1}$ \\ \\ $\Psi _1^{\omega _3}$}
 &*++<5pt>[F]\txt{$\Oo _{X_4}$}
 }
 \end{equation}
 \end{figure}

\vspace{0.2cm}

Here $\Psi _2^{{\omega _1,\omega _3}}$ is the result of the left mutation of $\Ll _{\omega _1+\omega _3}$ through the subcategory $\langle \A _0,\A_1\rangle$

\vspace{0.2cm}

\begin{equation}
\Psi _2^{{\omega _1,\omega _3}}: = {\bf L}_{\langle \A _0,\A_1\rangle}(\Ll_{\omega _1+\omega _3}).
\end{equation}

\vspace{0.2cm}

\begin{remark}
{\rm The automorphism of ${\bf SL}_4$ interchanging the two simple roots $\alpha _1$ and $\alpha _3$ induces an automorphism of $X_4$. Hence the sought--for collection should be invariant under this automorphism as well. Since $\Ee \otimes \Ll _{-\omega _1}=\Ee ^{\ast}\otimes \Ll _{-\omega _3}$, the above collection is indeed invariant.}
\end{remark}

\begin{lemma}\label{lem:F-collection_adjoint_variety_SL_4/B}
Let $p>2$. Then the collection of subcategories $\C = \langle \C _{-5},\C _{-4}, \C _{-3}, \C _{-2}, \C _{-1}, \C _0 \rangle$ is a semiorthogonal decomposition of $\Dd ^b(X_4)$ satisfying the conditions of Theorem \ref{th:Frobdecomposclaim}. 
\end{lemma}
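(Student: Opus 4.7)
The plan is to proceed in three stages. First, verify that the initial block collection $\langle \tilde \A_{-1}, \tilde \A_0, \tilde \A, \A_0, \A_1, \A_2\rangle$ is a full semiorthogonal decomposition of $\Dd^b(X_4)$; second, carry out the described mutations explicitly and invoke Theorem \ref{th:mutations_of_block_collections} to ensure that the resulting collection $\C$ remains pure (consisting of coherent sheaves in each slot) while acquiring the claimed block structure; third, verify the Frobenius cohomology vanishing $H^j(X_4, {\sf F}_m^{\ast}\Ee) = 0$ for $j \neq -i$ and $\Ee \in \C_i$.

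For the initial step I would exhibit $X_4$ as a $\Pp^2$-bundle $\pi\colon X_4 \to \Pp(\nabla_{\omega_1})\cong \Pp^3$ associated to $\Psi_1^{\omega_1}$, and apply Orlov's Theorem \ref{th:Orvlovth} to the Beilinson collection $\Oo, \Ll_{\omega_1}, \Ll_{2\omega_1}, \Ll_{3\omega_1}$ on $\Pp^3$. Pulling back and twisting by appropriate powers of $\Ll_{\omega_3}$ produces a full exceptional collection of twelve line bundles on $X_4$; a preliminary sequence of mutations, packaged via (\ref{eq:defseqforE}), brings in the bundle $\Ee\otimes\Ll_{-\omega_1}$ in the middle block and reassembles the collection into the six blocks as claimed. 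The pairwise Hom vanishing within each block is then a direct computation using Kempf's Theorem \ref{th:Kempf}: for two line bundles the relevant weight difference is non-dominant in both directions, and for $\Ee\otimes\Ll_{-\omega_1}$ sitting inside $\tilde\A$ the sequence (\ref{eq:defseqforE}) reduces the computation to line bundles.

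Next I carry out the mutations. The key is that a line bundle $\Ll_\omega$ mutated left through $\A_0=\langle \Oo_{X_4}\rangle$ fits in the defining sequence $0\to \Psi_1^\omega\to \nabla_\omega\otimes \Oo\to \Ll_\omega\to 0$, which identifies ${\bf L}_{\A_0}(\Ll_\omega)$ with $\Psi_1^\omega[1]$; after the shift $[-1]$ built into $\tau_i$, the block $\C_{-1}$ consists of the pure sheaves $\Psi_1^{\omega_1},\Psi_1^{\omega_3}$. The iterated left mutation of $\A_2$ through $\langle \A_0, \A_1\rangle$ produces $\C_{-2}$, with $\Psi_2^{\omega_1,\omega_3}$ being defined precisely as this iterated mutation. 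The dualized sequences $0\to \Ll_{-\omega}\to \nabla_\omega^{\ast}\otimes \Oo \to (\Psi_1^\omega)^{\ast}\to 0$, tensored by $\Ll_{-\omega_1-\omega_3}$, describe the right mutation yielding $\C_{-4}$. Since the Serre functor of $\Dd^b(X_4)$ shifted by $1-d=-5$ equals $(-\otimes\omega_{X_4})$, an exact functor that trivially preserves the standard $t$-structure, Theorem \ref{th:mutations_of_block_collections} implies $\C$ is a pure block collection; Proposition \ref{prop:right-left_mutations_inverse} together with Lemma \ref{lem:mutations_made_explicit} then yield that it is a full semiorthogonal decomposition.

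The technical heart is the Frobenius cohomology vanishing, and this is where I expect the main obstacle. For each object of $\C_i$ I would compute the cohomology of its Frobenius pullback by applying ${\sf F}_m^{\ast}$ to the defining sequences (\ref{eq:Psi_omega_1})--(\ref{eq:defseqforE}) and the analogous sequences constructed above for the $\Psi_2$-bundles; the resulting long exact sequences reduce everything to line bundle cohomology of the form $H^{\ast}(X_4, \Ll_{p^m\mu})$, controlled by Theorems \ref{th:Kempf}, \ref{th:AndthII} and \ref{th:Andcor} together with the dot-action of $\mathcal W$. The hypothesis $p>2$ enters precisely to guarantee that, for the weights $\mu$ appearing as differences in our collection, the shifted weights $p^m\mu+\rho$ land in a region where these vanishing theorems apply uniformly. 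The most delicate verification will be for $\C_{-2}$, in particular for $H^{\ast}(X_4, {\sf F}_m^{\ast}\Psi_2^{\omega_1,\omega_3})$: since this bundle is built by an iterated mutation rather than from an obvious geometric construction, one must patiently propagate the defining distinguished triangles through the Frobenius pullback, and it is this bookkeeping — and the extraction of the single non-vanishing degree $j=2$ — where the bulk of the work lives.
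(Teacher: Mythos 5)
Your overall strategy (mutate the six-block collection, use Bridgeland--Stern purity, then check Frobenius cohomology by pushing ${\sf F}^{\ast}$ through defining sequences) is the paper's strategy, but the step you yourself identify as the heart of the matter is exactly where your proposed method does not suffice. For $\Psi_2^{\omega_1,\omega_3}={\bf L}_{\langle\A_0,\A_1\rangle}(\Ll_{\omega_1+\omega_3})$, applying ${\sf F}^{\ast}$ to the resolution $0\to\Psi_2^{\omega_1,\omega_3}\to\Psi_1^{\omega_1}\otimes\nabla_{\omega_3}\oplus\Psi_1^{\omega_3}\otimes\nabla_{\omega_1}\to\nabla_{\omega_1+\omega_3}\otimes\Oo\to\Ll_{\omega_1+\omega_3}\to0$ and using ${\rm H}^i(X_4,{\sf F}^{\ast}\Psi_1^{\omega_k})=0$ for $i\neq1$ only yields vanishing for $i\neq1,2$: whether ${\rm H}^1$ survives is governed by an explicit map of nonzero spaces (essentially ${\rm H}^1({\sf F}^{\ast}\Psi_1^{\omega_1})\otimes\nabla_{\omega_3}^{[1]}\oplus{\rm H}^1({\sf F}^{\ast}\Psi_1^{\omega_3})\otimes\nabla_{\omega_1}^{[1]}$ mapping to the cokernel of $\nabla_{\omega_1+\omega_3}^{[1]}\to\nabla_{p(\omega_1+\omega_3)}$), and no amount of line-bundle cohomology via Kempf/Andersen, nor ``propagating the defining triangles,'' decides this. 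The paper eliminates ${\rm H}^1$ by a genuinely different mechanism: it first computes the right dual decomposition (Proposition \ref{prop:right_dual_dec_F-collection_n=4}), uses Theorem \ref{th:Kuzbasechange} to produce a resolution of the diagonal, and pushes it forward to get a left resolution of $\Psi_2^{\omega_1,\omega_3}$ whose terms --- including ${\tilde\G}^{\ast}\otimes\Ll_{-2\omega_1-2\omega_3}$ and the bundles treated in Claims \ref{cl:A_-3} and \ref{cl:A_-4} --- have Frobenius cohomology concentrated in degrees $\geq 2$, which forces ${\rm H}^1(X_4,{\sf F}^{\ast}\Psi_2^{\omega_1,\omega_3})=0$. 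Your plan contains no substitute for this idea, so as written the condition of Theorem \ref{th:Frobdecomposclaim} is not verified for the block $\C_{-2}$.

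A second, lesser gap is fullness. You assert that the twelve line bundles coming from Theorem \ref{th:Orvlovth} applied to the $\Pp^2$-bundle structure can be ``reassembled by a preliminary sequence of mutations'' into the six blocks containing, e.g., $\Ll_{-2\omega_1-\omega_3}$, $\Ll_{-\omega_1-2\omega_3}$ and $\Ee\otimes\Ll_{-\omega_1}$, but you never exhibit those mutations, and the two sets of objects are not related in any evident way; abstract existence of some mutation relating two full collections is not an argument. The paper instead proves generation directly: by Theorem \ref{th:Orvlovth} it suffices to show $\langle\A\rangle$ contains $\Dd^b(\Pp(\nabla_{\omega_3}))\otimes\Ll_{-k\omega_1}$ for $k=0,1,2$, which is done through explicit short exact sequences, and a closing Serre-duality argument kills a possible residual summand of the form $\Ll_{-2\omega_1-2\omega_3}\otimes{\sf V}^{\bullet}$. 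You should either carry out your mutation sequence explicitly or switch to such a direct generation argument. The remaining parts of your outline (identification of $\C_{-1}$, $\C_{-2}$, $\C_{-4}$ via the $\Psi$-sequences, purity from Theorem \ref{th:mutations_of_block_collections}, and the reduction of the orthogonality and of the Frobenius cohomology of $\Ee\otimes\Ll_{-\omega_1}$ and of $(\Psi_1^{\omega_k})^{\ast}\otimes\Ll_{-\omega_1-\omega_3}$ to line-bundle computations) do agree with the paper, though note that ``the weight difference is non-dominant in both directions'' is not by itself a vanishing criterion --- one needs the singularity/acyclicity statements of Theorems \ref{th:Kempf} and \ref{th:AndthII} as in the paper's orthogonality step.
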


\begin{proof}
The proof is broken up into a few separate statements which are found below.
\end{proof}

\subsubsection{\bf Orthogonality}
We first observe that since the collection $\C$ is obtained by mutating the collection $\A  = \langle {\tilde \A}_{-1}, {\tilde \A}_0, \tilde \A , \A _0, \A _1,\A _2 \rangle$, the semiorthogonality of $\C$ is equivalent to that of the collection $\A$. The necessary orthogonalities between the line bundles in ${\tilde \A}$ follow immediately from Theorems \ref{th:Kempf} and \ref{th:AndthII}. To ensure the necessary orthogonalities for the bundle $\Ee \otimes \Ll _{-\omega _1}$, use sequences (\ref{eq:defseqforE}) and (\ref{eq:seqforEonSL_4/B}), and once again Theorems \ref{th:Kempf} and \ref{th:AndthII}.

\subsubsection{\bf Fullness}
The variety $X_4$ is a projective bundle over $\Pp (\nabla _{\omega _3})$ that is canonically isomorphic to $\Pp (\Psi ^1_{\omega _3})$ with relative Picard group being generated by $\Ll _{-\omega _1}$ (cf. sequence (\ref{eq:defseqforE})). By Theorem \ref{th:Orvlovth}, it is sufficient to prove that the full triangulated subcategory generated by $\A$ contains the subcategories $\Dd ^b(\Pp (\nabla _{\omega _3}))\otimes \Ll _{-2\omega _1},\Dd ^b(\Pp (\nabla _{\omega _3}))\otimes \Ll _{-\omega _1}$, and $\Dd ^b(\Pp (\nabla _{\omega _3}))$. It is clear that $\Dd ^b(\Pp (\nabla _{\omega _3}))$ is contained in $\A $. Tensoring sequences (\ref{eq:defseqforE}) and  (\ref{eq:Psi_omega_3}) with $\Ll _{-\omega _1}$, one obtains:

\vspace*{0.2cm}

\begin{equation}\label{eq:1st presentation for E}
0\rightarrow \Ll _{-2\omega _1}\rightarrow \Psi _1^{\omega _3}\otimes \Ll _{-\omega _1}\rightarrow \Ee \otimes \Ll _{-\omega _1}\rightarrow 0, 
\end{equation}

\vspace*{0.2cm}

and

\vspace*{0.2cm}

\begin{equation}\label{eq:2nd presentation for E}
0\rightarrow \Psi _1^{\omega _3}\otimes\Ll _{-\omega _1} \rightarrow \nabla _{\omega _3}\otimes \Ll _{-\omega _1} \rightarrow \Ll _{-\omega _1+\omega _3}\rightarrow 0,
\end{equation}

\vspace*{0.2cm}

Considering the resolution 

\vspace*{0.2cm}

\begin{equation}\label{eq:Psi _2 omega _3 Koszul res}
0\rightarrow \Psi _2^{\omega _3}\rightarrow \nabla _{\omega _2}\otimes \Oo \rightarrow \nabla _{\omega _1}\otimes \Ll _{\omega _3}\rightarrow \Ll _{2\omega _3}\rightarrow 0,
\end{equation}

\vspace*{0.2cm}

and tensoring it with $\Ll _{-\omega _1}$, we obtain that $\Ll _{2\omega _3-\omega _1}$ is in $\A $, in virtue of an isomorphism $\Psi _2^{\omega _3}\otimes \Ll _{-\omega _1}=(\Psi _1^{\omega _3})^{\ast}\otimes \Ll _{-\omega _1-\omega _3}$. Hence, 
$\Dd ^b(\Pp (\nabla _{\omega _1}))\otimes \Ll _{-\omega _1}$ is also contained in $\A $.

Clearly, $\Ll _{-2\omega _1-\omega _3}$ and $\Ll _{-2\omega _1}$ are in $\A $. Considering the short exact sequence 

\vspace*{0.2cm}

\begin{equation}
0\rightarrow \Ll _{-2\omega _1+\omega _3}\rightarrow \Psi _1^{\omega _3}\otimes \Ll _{-\omega _1+\omega _3}\rightarrow \Ee ^{\ast}\rightarrow 0,
\end{equation}

\vspace*{0.2cm}

that is obtained from (\ref{eq:1st presentation for E}) by tensoring with $\Ll _{\omega _3}$ and using an isomorphism $\Ee ^{\ast}=\Ee \otimes \Ll _{-\omega _1+\omega _3}$. Now $\Ee ^{\ast}$ is in $\A$; this is seen by taking the dual of (\ref{eq:defseqforE}). Taking into account that 
$\Psi _1^{\omega _3}\otimes \Ll _{-\omega _1+\omega _3}\in \langle \Ll _{-\omega _1+\omega _3},\Ll _{-\omega _1+2\omega _3}\rangle$, 
we conclude that $\Ll _{-2\omega _1+\omega _3}$ is in $\A $.  Finally, assuming that there is a non--trivial object in the right orthogonal to $\A $, we see that it must be quasiisomorphic to $\Ll _{-2\omega _1-2\omega _3}\otimes {\sf V}^{\bullet}$ for some graded vector space ${\sf V}^{\bullet}$. However, 
$\Hom ^{\bullet}(\Psi _1^{\omega _1+\omega _3},\Ll _{-2\omega _1-2\omega _3}\otimes {\sf V}^{\bullet})={\sf V}^{\bullet}[5]$,
since $\omega _{X_4}=\Ll _{-3\omega _1-3\omega _3}$. Hence, ${\sf V}^{\bullet}=0$ and the statement follows.

\subsubsection{\bf Cohomology of Frobenius pull--backs}

Let us now check the property ${\rm H}^j(X_4,{\sf F}^{\ast}(?))=0$ for $j\neq i$ for a bundle $?\in  \C _{-i}, i=0,\dots,5$. The non--trivial verifications here concern the two bundles in the block $\C _{-4}$, the bundle $\Ee \otimes \Ll _{-\omega _1}$ in $\C _{-3}$, and $\Psi _2^{{\omega _1,\omega _3}}$ in $\C _{-2}$.

\begin{claim}
One has ${\rm H}^i(X_4,{\sf F}^{\ast}\Psi _2^{{\omega _1,\omega _3}})=0$ for $i\neq 2$.
\end{claim}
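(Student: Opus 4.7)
The plan is to unravel the definition of $\Psi_2^{\omega_1,\omega_3}$ as an iterated left mutation of $\Ll_{\omega_1+\omega_3}$ through the orthogonal blocks $\A_1=\langle \Ll_{\omega_1},\Ll_{\omega_3}\rangle$ and $\A_0=\langle \Oo\rangle$, extract from it a concrete locally free resolution, pull this resolution back by the Frobenius, and apply Kempf's theorem.

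Stepping through (\ref{eq:left-right_mutations_of_an_object}) and computing the relevant $R\Hom$'s by Kempf's Theorem \ref{th:Kempf} for the dominant line bundles together with Demazure's singularity lemma for the bad pairs (the weights $\omega_1-\omega_3$ and $\omega_3-\omega_1$ are singular, hence acyclic in every characteristic), one extracts a four-term exact sequence
\begin{equation*}
0 \to \Psi_2^{\omega_1,\omega_3} \to A_2 \to A_1 \to \Ll_{\omega_1+\omega_3} \to 0,
\end{equation*}
whose terms $A_1, A_2$ are direct sums of copies of $\Oo$, $\Ll_{\omega_1}$, $\Ll_{\omega_3}$ tensored with explicit $\bf SL_4$-modules (playing the role analogous to the Koszul resolution (\ref{eq:Psi _2 omega _3 Koszul res}) for $\Psi_2^{\omega_3}$).

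Next I apply $F^*$. Since $X_4$ is smooth, the absolute Frobenius is flat by Kunz's theorem, so $F^*$ preserves exactness and yields a resolution of $F^*\Psi_2^{\omega_1,\omega_3}$ by direct sums of the dominant line bundles $\Oo$, $\Ll_{p\omega_1}$, $\Ll_{p\omega_3}$ and $\Ll_{p(\omega_1+\omega_3)}$. By Theorem \ref{th:Kempf} each term has cohomology concentrated in degree $0$, hence the hypercohomology spectral sequence $E_1^{p,q}=H^q(F^*A_{-p})\Rightarrow H^{p+q}(F^*\Psi_2^{\omega_1,\omega_3})$ collapses onto its bottom row. The vanishing $H^i(X_4,F^*\Psi_2^{\omega_1,\omega_3})=0$ for $i\neq 2$ then reduces to showing exactness of the induced complex of $\bf SL_4$-modules
\begin{equation*}
H^0(X_4, F^*A_2) \to H^0(X_4, F^*A_1) \to \nabla_{p(\omega_1+\omega_3)}
\end{equation*}
at its first two positions, with the third cohomology (the cokernel) feeding into $H^2(F^*\Psi_2^{\omega_1,\omega_3})$.

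The main obstacle is this last representation-theoretic step. Both differentials are $\bf SL_4$-equivariant, so exactness can be checked by comparing multiplicities in tensor products $\nabla_{\omega_i}\otimes \nabla_{p\omega_j}$. In characteristic zero these decompositions are standard Pieri, and the hypothesis $p>2$ should preserve the relevant multiplicities at our weights; a careful case-by-case use of Theorems \ref{th:AndthII} and \ref{th:Andcor} ought to push any residual weights into the singular or dominant region. Should the equivariant bookkeeping turn out to be delicate, a fallback is to exploit the projective bundle structure $\pi:X_4\to \Pp(\nabla_{\omega_3})$ from sequence (\ref{eq:defseqforE}) together with Theorem \ref{th:Orvlovth}, and reduce the vanishing to the projection formula combined with classical Bott vanishing for twists on $\Pp(\nabla_{\omega_3})$.
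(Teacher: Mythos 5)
Your reduction is sound as far as it goes: mutating $\Ll_{\omega_1+\omega_3}$ through $\langle \A_0,\A_1\rangle$ (the block orthogonality via the singular weights $\pm(\omega_1-\omega_3)$ is correct) does yield a four-term locally free resolution of $\Psi_2^{\omega_1,\omega_3}$ whose other terms become, after ${\sf F}^{\ast}$, direct sums of the Kempf-acyclic line bundles $\Oo$, $\Ll_{p\omega_1}$, $\Ll_{p\omega_3}$, $\Ll_{p(\omega_1+\omega_3)}$ tensored with Frobenius-twisted modules, and the hypercohomology spectral sequence then identifies ${\rm H}^0$ and ${\rm H}^1$ of ${\sf F}^{\ast}\Psi_2^{\omega_1,\omega_3}$ with the cohomology of the induced three-term complex of global sections. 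But at that point you have only rephrased the Claim, not proved it. The vanishing of ${\rm H}^0$ and, above all, of ${\rm H}^1$ \emph{is} the exactness of that complex of ${\bf SL}_4$-modules, and the tools you invoke cannot establish it: in characteristic $p$ the modules $\nabla_{\omega_3}^{[1]}\otimes\nabla_{p\omega_1}$, $\nabla_{\omega_1}^{[1]}\otimes\nabla_{p\omega_3}$ are far from semisimple, so ``comparing multiplicities'' (whose characters indeed agree with the characteristic-zero Pieri answer, by good-filtration arguments) says nothing about whether the kernel of the twisted multiplication map to $\nabla_{p(\omega_1+\omega_3)}$ is covered by the image of the preceding term; exactness is a statement about the specific equivariant maps, while Theorems \ref{th:AndthII} and \ref{th:Andcor} only compute cohomology of line bundles, not kernels or cokernels of such module maps. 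The fallback via the projective bundle $X_4\to\Pp(\nabla_{\omega_3})$ is likewise not a proof: $\Psi_2^{\omega_1,\omega_3}$ is not pulled back from $\Pp(\nabla_{\omega_3})$ and restricts nontrivially to the fibres, so the projection formula plus Bott vanishing on $\Pp^3$ does not directly apply.

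This missing step is exactly where the paper has to work. From the (right) resolution by mutation one only gets, for free, ${\rm H}^i(X_4,{\sf F}^{\ast}\Psi_2^{\omega_1,\omega_3})=0$ for $i\neq 1,2$; the remaining vanishing of ${\rm H}^1$ is obtained not from this resolution but from a \emph{left} resolution of $\Psi_2^{\omega_1,\omega_3}$, produced by the resolution of the diagonal attached to the right dual collection of Proposition \ref{prop:right_dual_dec_F-collection_n=4}. The terms of that resolution involve $\Ll_{-2(\omega_1+\omega_3)}$, $(\Psi_1^{\omega_k})^{\ast}\otimes\Ll_{-\omega_1-\omega_3}$ (suitably twisted) and ${\tilde\G}^{\ast}\otimes\Ll_{-2\omega_1-2\omega_3}$, whose Frobenius pull-backs are acyclic in low cohomological degrees (Claims \ref{cl:A_-3} and \ref{cl:A_-4}, and the vanishing ${\rm H}^{<3}(X_4,{\sf F}^{\ast}({\tilde\G}^{\ast}\otimes\Ll_{-2\omega_1-2\omega_3}))=0$ coming from (\ref{eq:right_dual_E(-omega_1)})), and it is this mechanism that kills ${\rm H}^1$. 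So to complete your argument you would either have to prove outright the exactness of your complex of Frobenius-twisted Cartan multiplication maps — a genuine representation-theoretic result, not a character count — or switch, as the paper does, to a resolution on the other side by bundles whose Frobenius pull-backs have cohomology only in high degrees.
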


\begin{proof}
Recall that $\Psi _2^{{\omega _1,\omega _3}}$ is defined to be the left mutation of $\Psi _1^{\omega _1+\omega _3}$ through the subcategory $\langle  \Psi _1^{\omega _1}, \Psi _1^{\omega _3}\rangle$. Thus, there is a resolution:

\vspace*{0.2cm}

\begin{equation}
0\rightarrow \Psi _2^{{\omega _1,\omega _3}}\rightarrow \Psi _1^{\omega _1}\otimes \nabla _{\omega _3}\oplus \Psi _1^{\omega _3}\otimes \nabla _{\omega _1}\rightarrow \nabla _{\omega _1+\omega _3}\otimes \Oo \rightarrow \Ll _{\omega _1+\omega _3}\rightarrow 0.
\end{equation}

\vspace*{0.2cm}

Clearly, ${\rm H}^i(X_4,{\sf F}^{\ast}\Psi _1^{\omega _k})=0$ for $i\neq 1$ and $k=1,3$ (see \cite[Proposition 6.1]{FrobflagvarI}). 
Thus, one obtains ${\rm H}^i(X_4,{\sf F}^{\ast}\Psi _2^{{\omega _1,\omega _3}})=0$ for $i\neq 1,2$.
\comment{
Further,
the map ${\sf F}^{\ast}(\nabla _{\omega _1+\omega _3})\rightarrow \nabla _{p(\omega _1+\omega _3})$ is injective \footnote{E.g., from the linkage principle for $p>3$; check/reference for $p\leq 3$.}.}

It follows from the subsequent Proposition \ref{prop:right_dual_dec_F-collection_n=4} and Claims \ref{cl:A_-3} and \ref{cl:A_-4} that ${\rm H}^1(X_4,{\sf F}^{\ast}\Psi _2^{{\omega _1,\omega _3}})=0$. 
Indeed, by Lemma \ref{lem:F-collection_adjoint_variety_SL_4/B} and Theorem \ref{th:Kuzbasechange} one obtains, upon identifying the right dual collection with (\ref{eq:right_dual_F-decomposition_on_X_4}), a resolution of the diagonal of $X_4$. With this resolution of the diagonal in hand, one obtains a left resolution of $\Psi _2^{{\omega _1,\omega _3}}$. Specifically, denote $p_1,p_2$ the two projections of $X_4\times X_4$ onto $X_4$, and tensor the resolution of the digonal along the right $\boxtimes$ factor with $\Ll _{-2(\omega _1+\omega _3)}$. Pushing forward the result onto $X_4$ along $p_1$, one obtains:

\vspace*{0.2cm}

\begin{eqnarray}
& 0\rightarrow \Ll _{-2(\omega _1+\omega _3)}\rightarrow \Ll _{-(\omega _1+\omega _3)}\otimes \Delta _{\omega _1+\omega _3}\rightarrow  \\
& (\Psi _1^{\omega _1})^{\ast}\otimes \Ll _{-\omega _1-\omega _3}\otimes \Delta _{\omega _3}\oplus (\Psi _1^{\omega _3})^{\ast}\otimes \Ll _{-\omega _1-2\omega _3}\otimes \Delta _{\omega _1}\rightarrow 
\nabla _{\omega _2}\otimes {\tilde \G}^{\ast}\otimes \Ll _{-2\omega _1-2\omega _3}\rightarrow \Psi _2^{\omega _1,\omega _3}\rightarrow 0 \nonumber 
\end{eqnarray}

\vspace*{0.2cm}

(see (\ref{eq:right_dual_E(-omega_1)}) for the definition of the bundle 
${\tilde \G}$). It also follows straightforwardly from (\ref{eq:right_dual_E(-omega_1)}) that ${\rm H}^i(X_4,{\sf F}^{\ast}({\tilde \G}^{\ast}\otimes \Ll _{-2\omega _1-2\omega _3}))=0$ for $i<3$. Combining altogether all the above facts, one concludes that
${\rm H}^1(X_4,{\sf F}^{\ast}\Psi _2^{{\omega _1,\omega _3}})=0$. 

\end{proof}

\begin{claim}\label{cl:A_-3} 
One has ${\rm H}^i(X_4,{\sf F}^{\ast}(\Ee \otimes \Ll _{-\omega _1}))=0$ for $i\neq 3$.
\end{claim}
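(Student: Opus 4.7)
The plan is to reduce the computation of $H^*(X_4, {\sf F}^*(\Ee \otimes \Ll_{-\omega_1}))$ to the cohomology of line bundles on the full flag variety ${\bf SL}_4/{\bf B}$, where the tools of Section \ref{sec:Cohomology_on_G/B} apply directly. First, using the projection $\pi: {\bf SL}_4/{\bf B} \to X_4$, whose fibres are isomorphic to $\Pp^1$, Kempf's vanishing (Theorem \ref{th:Kempf}) applied relatively gives $\pi_*\Oo_{{\bf SL}_4/{\bf B}} = \Oo_{X_4}$ with higher direct images vanishing. By the projection formula it therefore suffices to compute $H^i({\bf SL}_4/{\bf B}, \pi^*{\sf F}^*(\Ee \otimes \Ll_{-\omega_1}))$.

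Next, I would pull back the sequence (\ref{eq:seqforEonSL_4/B}) by ${\sf F}$ and twist by $\pi^*\Ll_{-\omega_1}$, obtaining on ${\bf SL}_4/{\bf B}$ the short exact sequence
\begin{equation*}
0 \to \Ll_{-p\omega_2} \to {\sf F}^*\pi^*(\Ee \otimes \Ll_{-\omega_1}) \to \Ll_{-p\omega_1 + p\omega_2 - p\omega_3} \to 0.
\end{equation*}
(An alternative reduction would be to apply ${\sf F}^*$ to (\ref{eq:defseqforE}) twisted by $\Ll_{-\omega_1}$, and then invoke (\ref{eq:Psi_omega_3}) together with \cite[Proposition 6.1]{FrobflagvarI} for the cohomology of ${\sf F}^*\Psi_1^{\omega_3}\otimes \Ll_{-p\omega_1}$; but the approach via $\pi$ is more direct.)

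The problem is thereby reduced to analyzing the cohomology of the two line bundles on ${\bf SL}_4/{\bf B}$ associated with the weights $\chi_1 = -p\omega_2$ and $\chi_2 = -p\omega_1 + p\omega_2 - p\omega_3$. For each I would iteratively apply Theorem \ref{th:Andcor} to reflect $\chi$ by a suitable sequence of simple reflections $s_{\alpha_i}$, tracking the cumulative cohomological shift. At each step the hypothesis $\langle \chi, \alpha_i^\vee\rangle \geq -p$ must be verified (this is precisely where the assumption $p>2$ enters, guaranteeing the estimates remain in the range of Theorem \ref{th:Andcor}), and the process terminates either when the current weight becomes singular along some simple root wall (forcing acyclicity via the extension to positive characteristic of Theorem \ref{th:Bott-Demazure_th}(a) mentioned at the start of Subsection 2.3) or when it lands in the dominant chamber, where Kempf's Theorem \ref{th:Kempf} pins cohomology to degree $0$ on the reflected weight and hence to the accumulated degree on $\chi$.

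Finally, the long exact sequence associated with the displayed SES assembles these two computations into the required conclusion. The main obstacle is the reflection analysis in the previous step: the exact sequence of reflections depends on the value of $p$, and one must verify that in every case the two line bundles have their cohomologies concentrated in degrees compatible with placing $H^*({\sf F}^*(\Ee\otimes \Ll_{-\omega_1}))$ entirely in degree $3$ (where $3$ is dictated by the block position of $\Ee\otimes \Ll_{-\omega_1}$ in $\C_{-3}$, in accordance with Theorem \ref{th:Frobdecomposclaim}). By the $\alpha_1 \leftrightarrow \alpha_3$ symmetry noted in the remark preceding the lemma, half of this work is redundant, which provides a useful consistency check.
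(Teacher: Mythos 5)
Your reduction to line bundles on ${\bf SL}_4/{\bf B}$ via the pullback of (\ref{eq:seqforEonSL_4/B}) twisted by $\Ll_{-\omega_1}$ is sound, and it reproduces one half of the paper's argument: the weights $-p\omega_2$ and $-p\omega_1+p\omega_2-p\omega_3$ have cohomology concentrated in degrees $4$ and $3$ respectively (for generic $p$). But the final step fails as stated. The long exact sequence of your displayed short exact sequence only confines ${\rm H}^*(X_4,{\sf F}^*(\Ee\otimes\Ll_{-\omega_1}))$ to degrees $3$ and $4$: one gets
\begin{equation*}
0\rightarrow {\rm H}^3({\sf F}^*(\Ee\otimes\Ll_{-\omega_1}))\rightarrow {\rm H}^3(\Ll_{-p\omega_1+p\omega_2-p\omega_3})\rightarrow {\rm H}^4(\Ll_{-p\omega_2})\rightarrow {\rm H}^4({\sf F}^*(\Ee\otimes\Ll_{-\omega_1}))\rightarrow 0,
\end{equation*}
and killing ${\rm H}^4$ requires the connecting map to be surjective, which is a statement about an actual morphism (or a dimension count you do not have), not something the position of the cohomology of the two line bundles can give you. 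The hypothesis $p>2$ and Theorem \ref{th:Andcor} do not help here.

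This is exactly why the paper does not rely on this sequence alone: the route you set aside as a mere ``alternative'' is in fact the indispensable second half. Twisting (\ref{eq:Psi_omega_3}) by $\Ll_{-\omega_1}$ and applying ${\sf F}^*$ shows that ${\sf F}^*(\Psi_1^{\omega_3}\otimes\Ll_{-\omega_1})$ has cohomology only in degree $3$ (since $\nabla_{\omega_3}\otimes\Ll_{-p\omega_1}$ lives in degree $3$ and $\Ll_{-p\omega_1+p\omega_3}$ in degree $2$); then (\ref{eq:defseqforE}) twisted by $\Ll_{-\omega_1}$, with sub $\Ll_{-2p\omega_1}$ also in degree $3$, confines ${\rm H}^*({\sf F}^*(\Ee\otimes\Ll_{-\omega_1}))$ to degrees $2,3$. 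Note that this chain by itself suffers from the symmetric defect (it cannot rule out degree $2$). The paper's proof works precisely by running both chains and intersecting the two ranges $\{3,4\}\cap\{2,3\}=\{3\}$. To repair your proposal, carry out the computation you labelled as the alternative as well, and combine the two vanishing ranges; neither one alone closes the argument.
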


\begin{proof}
Recall the short exact sequences:

\vspace*{0.2cm}

\begin{equation}
0\rightarrow \Ll _{-\omega _2}\rightarrow \pi ^{\ast}\Ee \otimes \Ll _{-\omega _1} \rightarrow \Ll _{-\omega _1+\omega _2-\omega _3}\rightarrow 0,
\end{equation}

\vspace*{0.2cm}

and,

\begin{equation}
0\rightarrow \Ll _{-2\omega _1}\rightarrow \Psi _1^{\omega _3}\otimes \Ll _{-\omega _1}\rightarrow \Ee \otimes \Ll _{-\omega _1}\rightarrow 0.
\end{equation}

\vspace*{0.2cm}

Finally, tensoring (\ref{eq:Psi_omega_3}) with $\Ll _{-\omega _1+\omega _3}$, obtain 

\vspace*{0.2cm}

\begin{equation}
0\rightarrow \Psi _1^{\omega _3}\otimes \Ll _{-\omega _1}\rightarrow \nabla _{\omega _3}\otimes \Ll _{-\omega _1}\rightarrow \Ll _{-\omega _1+\omega _3}\rightarrow 0.	
\end{equation}

\vspace*{0.2cm}

From the last sequence we conclude that ${\rm H}^i(X_4,{\sf F}^{\ast}(\Psi _1^{\omega _3}\otimes \Ll _{-\omega _1}))=0$ for $i\neq 3$. Indeed, ${\rm H}^i(X_4,\nabla _{\omega _3}\otimes \Ll _{-p\omega _1})=0$ for $i\neq 3$ (for $p=2,3$ it is acyclic). On the other hand, one has ${\rm H}^i(X_4,\Ll _{-p\omega _1+p\omega _3})=0$ for $i\neq 2$. Indeed, $s_{\alpha _2}\cdot s_{\alpha _1}\cdot (-p\omega _1+p\omega _3)=(p-3)\omega _2+2\omega _3$ (for $p=2$ it is acyclic). Therefore, from the middle sequence we obtain that ${\rm H}^i(X_4,{\sf F}^{\ast}(\Ee \otimes \Ll _{-\omega _1}))=0$ for $i\neq 2,3$.
On the other hand, from the first sequence we see that ${\rm H}^i(X_4,{\sf F}^{\ast}(\Ee \otimes \Ll _{-\omega _1}))=0$ for $i\neq 3,4$ (we use throughout the fact that computing the cohomology of a coherent sheaf on $X_4$ is equivalent to doing that on the full flag variety ${\bf SL}_4/{\bf B}$, the functor $\pi ^{\ast}: \Dd ^b(X_4)\rightarrow \Dd ^b({\bf SL}_4/{\bf B})$ being fully faithful). Indeed, ${\rm H}^i({\bf SL}_4/{\bf B},\Ll _{-p\omega _2})=0$ for $i\neq 4$ (for $p=2,3$ it is acyclic), while 
 ${\rm H}^i({\bf SL}_4/{\bf B},\Ll _{-p\omega _1+p\omega _2-\omega _3})=0$ for $i\neq 3$ (for $p=2$ it is trivial for $i\neq 2$, and isomorphic to ${\sf k}$ in this degree, and for $p=3$ it is acyclic): indeed, $s_{\alpha _2}\cdot s_{\alpha _3}\cdot s_{\alpha _1}\cdot (-p\omega _1+p\omega _2-\omega _3)=\omega _1+(p-4)\omega _2+\omega _3$.
\end{proof}

\begin{claim}\label{cl:A_-4}
One has ${\rm H}^i(X_4,{\sf F}^{\ast}((\Psi _1^{\omega _1})^{\ast}\otimes \Ll _{-\omega _1-\omega _3}))=
{\rm H}^i(X_4,{\sf F}^{\ast}((\Psi _1^{\omega _3})^{\ast}\otimes \Ll _{-\omega _1-\omega _3}))=0$ for $i\neq 4$.
\end{claim}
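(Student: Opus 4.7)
The plan is to bound the cohomology ${\rm H}^i(X_4,{\sf F}^{\ast}((\Psi_1^{\omega_k})^{\ast} \otimes \Ll_{-\omega_1-\omega_3}))$ from below and from above by means of two different resolutions of the bundle in question, and then intersect the resulting vanishing ranges. Since the outer automorphism of ${\bf SL}_4$ swapping $\alpha_1 \leftrightarrow \alpha_3$ exchanges $\Psi_1^{\omega_1}$ and $\Psi_1^{\omega_3}$ while fixing $\Ll_{-\omega_1-\omega_3}$, it is enough to carry out the argument for $k=1$.

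For the vanishing in degrees $i \leq 3$, the idea is to dualize the defining sequence (\ref{eq:Psi_omega_1}), tensor with $\Ll_{-\omega_1-\omega_3}$, and apply ${\sf F}^{\ast}$ to obtain
\[
0 \to \Ll_{-2p\omega_1-p\omega_3} \to \nabla_{\omega_1}^{\ast} \otimes \Ll_{-p\omega_1-p\omega_3} \to {\sf F}^{\ast}((\Psi_1^{\omega_1})^{\ast} \otimes \Ll_{-\omega_1-\omega_3}) \to 0.
\]
Since $\omega_{X_4} = \Ll_{-3(\omega_1+\omega_3)}$, Serre duality identifies the cohomology of the two outer terms with that of $\Ll_{(2p-3)\omega_1+(p-3)\omega_3}$ and $\Ll_{(p-3)(\omega_1+\omega_3)}$, both of which are dominant for $p \geq 3$. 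Theorem~\ref{th:Kempf} then concentrates their cohomology in degree $5$, and the associated long exact sequence immediately yields ${\rm H}^i = 0$ for $i \leq 3$.

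For the vanishing in degrees $i \geq 5$, I would use the isomorphism $(\Psi_1^{\omega_1})^{\ast} \cong \Psi_2^{\omega_1}\otimes \Ll_{\omega_1}$, valid since $\Psi_1^{\omega_1}$ is of rank $3$ with $\det = \Ll_{-\omega_1}$, so that the bundle of interest identifies with $\Psi_2^{\omega_1}\otimes \Ll_{-\omega_3}$. Splicing the second exterior power sequence $0\to\Psi_2^{\omega_1}\to\Lambda^2\nabla_{\omega_1}\otimes\Oo\to\Psi_1^{\omega_1}\otimes\Ll_{\omega_1}\to 0$ with the defining sequence (\ref{eq:Psi_omega_1}) twisted by $\Ll_{\omega_1}$ yields the four--term Koszul--type resolution
\[
0 \to \Psi_2^{\omega_1} \to \Lambda^2\nabla_{\omega_1} \otimes \Oo \to \nabla_{\omega_1} \otimes \Ll_{\omega_1} \to \Ll_{2\omega_1} \to 0.
\]
After tensoring with $\Ll_{-\omega_3}$ and applying ${\sf F}^{\ast}$, the three non-$\Psi_2$ terms involve the line bundles $\Ll_{-p\omega_3}$, $\Ll_{p\omega_1-p\omega_3}$, $\Ll_{2p\omega_1-p\omega_3}$. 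A Bott--type computation via Theorems~\ref{th:AndthII} and~\ref{th:Andcor} yields ${\rm H}^i(\Ll_{-p\omega_3}) = 0$ for $i \geq 4$ (and acyclicity when $p=3$), together with ${\rm H}^i(\Ll_{p\omega_1-p\omega_3}) = {\rm H}^i(\Ll_{2p\omega_1-p\omega_3}) = 0$ for $i \geq 3$. The hypercohomology spectral sequence of the resulting four--term complex then forces ${\rm H}^i = 0$ for $i \geq 5$ by a bidegree count.

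Combining the two vanishing ranges gives cohomology concentrated in degree $4$, as required; the argument for $(\Psi_1^{\omega_3})^{\ast}\otimes\Ll_{-\omega_1-\omega_3}$ is obtained by applying the outer automorphism. The main obstacle I anticipate is carrying out the Bott-type computations for the three line bundles in the Koszul resolution uniformly for all $p \geq 3$, especially at the boundary values $p = 3, 4$ where intermediate weights after dot-action by Weyl reflections become singular and must be treated case-by-case using Theorem~\ref{th:AndthII} instead of a uniform classical Bott--Demazure argument.
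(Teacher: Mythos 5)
Your proposal is correct and takes essentially the same route as the paper: the paper likewise combines the dualized defining sequence (\ref{eq:Psi_omega_1}) twisted by $\Ll _{-\omega _1-\omega _3}$ (giving concentration of ${\sf F}^{\ast}$--cohomology in degrees $4,5$) with the Koszul-type resolution of $\Psi _2^{\omega _1}=(\Psi _1^{\omega _1})^{\ast}\otimes \Ll _{-\omega _1}$ twisted by $\Ll _{-\omega _3}$, whose three trivial-plus-line-bundle terms $\Ll _{-p\omega _3}$, $\Ll _{p\omega _1-p\omega _3}$, $\Ll _{2p\omega _1-p\omega _3}$ are shown by the same dot-action computations to kill ${\rm H}^5$. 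The only differences are presentational (Serre duality plus Theorem \ref{th:Kempf} instead of the paper's direct assertion, and vanishing ranges instead of exact concentration degrees), and your stated ranges are consistent with the paper's sharper ones, including the acyclicity at $p=3$.
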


\begin{proof}
It is sufficient to prove the statement for the first group. On the one hand, one has a short exact sequence (note that $(\Psi _1^{\omega _1})^{\ast}\otimes \Ll _{-\omega _1-\omega _3} = \T _{\Pp (\nabla _{\omega _1})}\otimes \Ll _{-2\omega _1-\omega _3}$): 

\vspace*{0.2cm}

\begin{equation}
0\rightarrow  \Ll _{-2\omega _1-\omega _3}\rightarrow \nabla _{\omega _1}\otimes \Ll _{-\omega _1-\omega _3}\rightarrow 
(\Psi _1^{\omega _1})^{\ast}\otimes \Ll _{-\omega _1-\omega _3}\rightarrow 0,
\end{equation}

\vspace*{0.2cm}

from which we conclude that ${\rm H}^i(X_4,{\sf F}^{\ast}((\Psi _1^{\omega _1})^{\ast}\otimes \Ll _{-\omega _1-\omega _3}))=0$ for $i\neq 4,5$. On the other hand, we have an isomorphism $(\Psi _1^{\omega _1})^{\ast}\otimes \Ll _{-\omega _1}=\Psi _2^{\omega _1}$ and the resolution

\vspace*{0.2cm}

\begin{equation}\label{eq:Psi _2 omega _1 Koszul res}
0\rightarrow \Psi _2^{\omega _1}\rightarrow \nabla _{\omega _2}\otimes \Oo \rightarrow \nabla _{\omega _1}\otimes \Ll _{\omega _1}\rightarrow \Ll _{2\omega _1}\rightarrow 0.
\end{equation}

\vspace*{0.2cm}

Tensoring it with $\Ll _{-\omega _3}$, one obtains:

\vspace*{0.2cm}

\begin{equation}
0\rightarrow (\Psi _1^{\omega _1})^{\ast}\otimes \Ll _{-\omega _1-\omega _3}\rightarrow \nabla _{\omega _2}\otimes \Ll _{-\omega _3} \rightarrow \nabla _{\omega _3}\otimes \Ll _{\omega _1-\omega _3}\rightarrow \Ll _{2\omega _1-\omega _3}\rightarrow 0.
\end{equation}

\vspace*{0.2cm}

We see that ${\rm H}^i(X_4, \nabla _{\omega _2}\otimes \Ll _{-p\omega _3})=0$ for $i\neq 3$ (for $p=2,3$ it is acyclic), and  ${\rm H}^i(X_4,\nabla _{\omega _3}\otimes \Ll _{p\omega _1-p\omega _3})=0$ for $i\neq 2$.
Finally, ${\rm H}^i(X_4,\Ll _{2p\omega _1-p\omega _3})=0$ for $i\neq 2$. Indeed, 
$s_{\alpha _2}\cdot s_{\alpha _3}\cdot (2p\omega _1-p\omega _3)=(p+2)\omega _1+(p-3)\omega _2$. Therefore, 
${\rm H}^5(X_4,{\sf F}^{\ast}((\Psi _1^{\omega _1})^{\ast}\otimes \Ll _{-\omega _1-\omega _3})=0$, and the statement follows.

\end{proof}

\begin{proposition}\label{prop:right_dual_dec_F-collection_n=4}

The right dual decomposition $\tilde C$ to (\ref{eq:F-decomposition_on_X_4}) consists of the following subcategories:

\begin{figure}[H]
\begin{equation}\label{eq:right_dual_F-decomposition_on_X_4}
\xymatrix{
&{\tilde \C}_0 & {\tilde \C}_1 & {\tilde \C}_2 & {\tilde \C}_3 & {\tilde \C}_4 & {\tilde \C}_5& \\
& || & || & || & || & || & || & \\
&*++<5pt>[F]\txt{$\Oo _{X_4}$}
&*++<8pt>[F]\txt{$\Ll _{\omega _1} $ \\ \\  $\Ll _{\omega _3}$}
&*++<6pt>[F]\txt{$\Ll _{2\omega _1}$  \\ \\  $\Ll _{\omega _1+\omega _3}$ \\ \\ $\Ll _{2\omega _3}$}
&*++<8pt>[F]\txt{$\Psi _2^{\omega _1}\otimes \Ll _{\omega _1+2\omega _3}$ \\ \\   ${\tilde \G}$ \\ \\ $\Psi _2^{\omega _3}\otimes \Ll _{2\omega _1+\omega _3}$}
 &*++<8pt>[F]\txt{$\Ll _{\omega _1+2\omega _3}$ \\ \\ $\Ll _{2\omega _1+\omega _3}$}
 &*++<5pt>[F]\txt{$\Ll _{2(\omega _1+\omega _3)}$}
 }
 \end{equation}
 \end{figure}

\vspace{0.2cm}

where ${\tilde \G}$ is a vector bundle of rank 4 fitting into a unique non--split short exact sequence:

\vspace{0.2cm}

\begin{equation}\label{eq:right_dual_E(-omega_1)}
0\rightarrow \Ll _{2\omega _3}\rightarrow {\tilde \G}\rightarrow \Psi _1^{\omega _3}\otimes \Ll _{2\omega _1+\omega _3}\rightarrow 0.
\end{equation}

\vspace{0.2cm}

\end{proposition}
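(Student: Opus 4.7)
The plan is to use the characterization of the right dual collection via the pairing (Proposition \ref{prop:dual_exc_coll_characterization}): propose the collection (\ref{eq:right_dual_F-decomposition_on_X_4}) and verify the required $\Hom$ orthogonality with respect to (\ref{eq:F-decomposition_on_X_4}) using the explicit resolutions of the bundles $\Psi_i^{\omega_k}$, $\Ee$, and $\tilde \G$ together with the Bott calculus supplied by Theorems \ref{th:Kempf} and \ref{th:AndthII}.

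The two extreme blocks are immediate. One has $\tilde \C_0 = \C_0 = \langle \Oo_{X_4}\rangle$ since the rightmost block of the original collection is left unmutated, while Lemma \ref{lem:mutations_canonical_class} yields $\tilde \C_5 = \C_{-5}\otimes \omega_{X_4}^{-1} = \langle \Ll_{-\omega_1-\omega_3}\otimes \Ll_{3(\omega_1+\omega_3)}\rangle = \langle \Ll_{2(\omega_1+\omega_3)}\rangle$, matching the figure. The intermediate blocks $\tilde \C_1$, $\tilde \C_2$ and $\tilde \C_4$ consist of line bundles or twisted copies of $\Psi_i^{\omega_k}$, and verifying the orthogonality property is a direct Bott calculation: for each proposed object $G\in \tilde\C_k$ and each $E\in \C_{-j}$ with $j\ne 5-k$, I would write down a line-bundle resolution of $G$ (respectively, of $E$) using the defining sequences (\ref{eq:Psi_omega_1}), (\ref{eq:Psi_omega_3}), the Koszul-type resolutions (\ref{eq:Psi _2 omega _1 Koszul res}), (\ref{eq:Psi _2 omega _3 Koszul res}), their duals, and the defining resolution of $\Psi_2^{\omega_1,\omega_3}$, and then read off the required vanishings from Theorems \ref{th:Kempf}, \ref{th:AndthII}. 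The matching indices $j = 5-k$ produce a single copy of ${\sf k}$ concentrated in degree $0$. The automorphism $\alpha_1\leftrightarrow \alpha_3$ of $X_4$ cuts the number of independent verifications roughly in half.

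The main obstacle is the middle block $\tilde \C_3$, and in particular the rank-$4$ bundle $\tilde \G$, which is not a straightforward twist of a previously known sheaf. My strategy for $\tilde \G$ has three parts. First, I would compute $\Ext^1_{X_4}(\Psi_1^{\omega_3}\otimes \Ll_{2\omega_1+\omega_3}, \Ll_{2\omega_3})$ using the twist of (\ref{eq:Psi_omega_3}) by $\Ll_{-2\omega_1+\omega_3}$ and Bott calculus; the resulting group will turn out to be one-dimensional, so there is a unique non-split extension (\ref{eq:right_dual_E(-omega_1)}), which defines $\tilde \G$ as a rank-$4$ vector bundle. Second, I would splice (\ref{eq:right_dual_E(-omega_1)}) with the twist of (\ref{eq:defseqforE}) by $\Ll_{-\omega_1}$ to produce a line-bundle filtration of $\pi^{\ast}\tilde \G$ on ${\bf SL}_4/{\bf B}$, thereby reducing every $\Hom$ involving $\tilde \G$ to a finite collection of cohomologies of line bundles. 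Third, I would verify $\Hom^{\bullet}_{\D}(\tilde \G, F) = 0$ for every $F$ in a block $\C_{-j}$ of (\ref{eq:F-decomposition_on_X_4}) with $j \ne 2$, and that the pairing with $\Ee\otimes \Ll_{-\omega_1}$ is a single copy of ${\sf k}$ in degree $0$; by Proposition \ref{prop:dual_exc_coll_characterization} this identifies $\tilde \G$ with the right-dual object of $\Ee\otimes \Ll_{-\omega_1}$. The remaining two entries $\Psi_2^{\omega_3}\otimes \Ll_{2\omega_1+\omega_3}$ and $\Psi_2^{\omega_1}\otimes \Ll_{\omega_1+2\omega_3}$ of $\tilde \C_3$ are handled analogously by combining the Koszul resolutions of $\Psi_2^{\omega_k}$ with the defining sequences of $\Psi_1^{\omega_k}$, and by the $\alpha_1\leftrightarrow\alpha_3$ symmetry only one of them requires detailed verification.

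Finally, mutual orthogonality of the three objects within each block of $\tilde\C$ is automatic because the right dual operation preserves block structure for block collections, as a consequence of the definitions of Section \ref{subsec:blockcol} together with the orthogonality already established inside each block of $\C$ in Lemma \ref{lem:F-collection_adjoint_variety_SL_4/B}. Assembling all the steps above completes the identification of the right dual collection with (\ref{eq:right_dual_F-decomposition_on_X_4}).
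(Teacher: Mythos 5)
Your overall strategy --- guess the collection and certify it via the pairing criterion of Proposition \ref{prop:dual_exc_coll_characterization} --- is legitimate, and indeed the paper uses exactly this criterion as a final consistency check. But the paper's actual proof is constructive: it computes ${\bf R}_{\langle \A_0,\A_1,\A_2\rangle}(\Ee\otimes\Ll_{-\omega_1})$ by converting it, via Lemma \ref{lem:mutations_canonical_class}, into the left mutation through $\langle\tilde\A_{-1},\tilde\A_0\rangle$ twisted by $\omega_{X_4}^{-1}$, and then performs the two left mutations explicitly (computing $\Hom^\bullet(\Ll_{-\omega_1-\omega_3},\Ee\otimes\Ll_{-\omega_1})=\nabla_{\omega_2}$, etc.) through two diagram chases. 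This difference is not cosmetic here, because your plan breaks at the step where you define ${\tilde\G}$. You predict that $\Ext^1_{X_4}(\Psi_1^{\omega_3}\otimes\Ll_{2\omega_1+\omega_3},\Ll_{2\omega_3})$ is one--dimensional; it is in fact zero: this group is $\mathrm{H}^1(X_4,(\Psi_1^{\omega_3})^{\vee}\otimes\Ll_{-2\omega_1+\omega_3})$, and twisting the dual of (\ref{eq:Psi_omega_3}) by $\Ll_{-2\omega_1+\omega_3}$ sandwiches this bundle between $\Ll_{-2\omega_1}$ and $\nabla_{\omega_3}^{\vee}\otimes\Ll_{-2\omega_1+\omega_3}$, both of which are acyclic in every characteristic (they restrict to $\Oo(-2)$ on the fibres of the projection $X_4\rightarrow\Pp(\nabla_{\omega_3})$). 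So there is no non--split extension of the shape printed in (\ref{eq:right_dual_E(-omega_1)}); the paper's own mutation computation produces ${\tilde\G}$ as the unique non--split extension of $\Psi_1^{\omega_1}\otimes\Ll_{2\omega_1+\omega_3}$ by $\Ll_{2\omega_3}$ (equivalently, by the $\alpha_1\leftrightarrow\alpha_3$ symmetry, of $\Psi_1^{\omega_3}\otimes\Ll_{\omega_1+2\omega_3}$ by $\Ll_{2\omega_1}$), the relevant $\Ext^1$ being $\mathrm{H}^2(X_4,\Ll_{-3\omega_1+\omega_3})={\sf k}$. The printed sequence (\ref{eq:right_dual_E(-omega_1)}) thus appears to contain a typo, and a verification--only argument that trusts it cannot succeed; more generally, your approach has no independent mechanism for producing the correct ${\tilde\G}$, which is precisely what the mutation computation supplies.

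Two smaller points. First, your claim that the matched pairs give ${\sf k}$ concentrated in degree $0$ is not correct for the unshifted sheaves listed in (\ref{eq:right_dual_F-decomposition_on_X_4}): e.g.\ $\Hom^\bullet(\Ll_{2(\omega_1+\omega_3)},\Ll_{-\omega_1-\omega_3})=\mathrm{H}^\bullet(X_4,\omega_{X_4})={\sf k}[-5]$, and the paper finds $\Hom^\bullet({\tilde\G},\Ee\otimes\Ll_{-\omega_1})={\sf k}[-3]$. To invoke Proposition \ref{prop:dual_exc_coll_characterization} you must insert the shifts $[-i]$ of Theorem \ref{th:Frobdecomposclaim}(2) (the figure lists the duals only up to shift); the correct statement is one--dimensionality in the single degree $5-k$. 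Second, the intra--block orthogonality you call ``automatic'' is fine, but it is anyway subsumed by the pairing verification once the shifts are handled correctly. With the corrected extension defining ${\tilde\G}$ and the shifts tracked, your verification route would work, but as written it both relies on a false $\Ext^1$ computation and misstates the degree in which the dual pairing lives.
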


\begin{proof}
The right dual objects to all the generators of $\A$ but $\Ee \otimes \Ll _{-\omega _1}$ are calculated rather straightforwardly.
To compute the right dual to $\Ee \otimes \Ll _{-\omega _1}$, 
it is sufficient to compute its left mutation through the subcategory generated by $\langle {\tilde \A}_{-1},{\tilde \A}_0\rangle$ and then use Lemma \ref{lem:mutations_canonical_class}.
 One computes $\Hom ^{\bullet}(\Ll _{-\omega _1-\omega _3},\Ee \otimes \Ll _{-\omega _1})=\nabla _{\omega _2}$. One then  obtains the commutative diagram depicted below:

\vspace{0.2cm}

\begin{figure}[H]
$$
\xymatrix{
&  & 0 \ar@{->}[d] & 0 \ar@{->}[d] & 0 \ar@{->}[d]&\\
& 0\ar@{->}[r]\ar@{->}[d] &\Psi _2^{\omega _1}\otimes \Ll _{-\omega _1-\omega _3} \ar@{->}[r]\ar@{->}[d] & \Ff \ar@{->}[r]\ar@{->}[d] & \Ll _{-2\omega _3}\ar@{->}[r]\ar@{->}[d] & 0\\
0\ar@{->}[r] & 0 \ar@{->}[r]\ar@{->}[d] & \nabla _{\omega _2}\otimes \Ll _{-\omega _1-\omega _3}  \ar@{->}[r]^{\simeq}\ar@{->}[d] & \nabla _{\omega _2}\otimes \Ll _{-\omega _1-\omega _3} \ar@{->}[d]\ar@{->}[r] & 0\ar@{->}[d] \\
0\ar@{->}[r] & \Ll _{-2\omega _3}\ar@{->}[r]\ar@{->}[d] & \Psi _1^{\omega _1}\otimes \Ll _{-\omega _3}\ar@{->}[r]\ar@{->}[d] & \Ee \otimes \Ll _{-\omega _1}\ar@{->}[r]\ar@{->}[d]& 0\\
& 0 & 0 & 0 & & \\
}
$$
 \end{figure}
  
\vspace{0.2cm}

Thus, ${\bf L}_{{\tilde \A}_0}(\Ee \otimes \Ll _{-\omega _1})=\Ff [1]$, where $\Ff$ is the extension from the top row of the above diagram that corresponds to a unique non--split extension $\Ext ^1(\Ll _{-2\omega _3},\Psi _2^{\omega _1}\otimes \Ll _{-\omega _1-\omega _3})={\sf k}$. Further, as $\Ll _{-2\omega _1-\omega _3}$ and $\Ll _{-\omega _1-2\omega _3}$ are mutually orthogonal, the left mutation of $\Ff$ through $\langle \Ll _{-2\omega _1-\omega _3},\Ll _{-\omega _1-2\omega _3}\rangle$ is found from the triangle 

\vspace{0.2cm}

\begin{equation}
\dots \rightarrow \nabla _{\omega _1}\otimes (\Ll _{-2\omega _1-\omega _3}\oplus \Ll _{-\omega _1-2\omega _3})\rightarrow \Ff \rightarrow \G [1]\rightarrow \dots 
\end{equation}

\vspace{0.2cm}

as one calculates  $\Hom ^{\bullet}(\Ll _{-2\omega _1-\omega _3},\Ff )= \Hom ^{\bullet}(\Ll _{-\omega _1-2\omega _3},\Ff )=\nabla _{\omega _1}$ from the same top row of the above diagram. Consider the diagram

\vspace{0.2cm}

\begin{figure}[H]
$$
\xymatrix{
& 0\ar@{->}[d] & 0 \ar@{->}[d] &  0 \ar@{->}[d] & &\\
0\ar@{->}[r] & \Ll _{-3\omega _1-\omega _3}\ar@{->}[r]\ar@{->}[d] &\G \ar@{->}[r]\ar@{->}[d] & \Psi _1^{\omega _1}\otimes \Ll _{-\omega _1-2\omega _3} \ar@{->}[r]\ar@{->}[d] & 0\\
0\ar@{->}[r] & \nabla _{\omega _1}\otimes \Ll _{-2\omega _1-\omega _3}\ar@{->}[r]\ar@{->}[d] &  \nabla _{\omega _1}\otimes (\Ll _{-2\omega _1-\omega _3}\oplus \Ll _{-\omega _1-2\omega _3}) \ar@{->}[r]\ar@{->}[d] & \nabla _{\omega _1}\otimes \Ll _{-\omega _1-2\omega _3}\ar@{->}[d]\ar@{->}[r] & 0 \\
0\ar@{->}[r] & \Psi _2^{\omega _1}\otimes \Ll _{-\omega _1-\omega _3}\ar@{->}[r]\ar@{->}[d] & \Ff \ar@{->}[r]\ar@{->}[d] &\Ll _{-2\omega _3}\ar@{->}[r]\ar@{->}[d] & 0\\
& 0 & 0 & 0 & & \\
}
$$
 \end{figure}
 
 \vspace{0.2cm}
  
The bottom row of the above diagram is the defining short exact sequence for $\Ff$, the middle one being the split extension. One thus finds the left mutation ${\bf L}_{{\tilde \A}_{-1}}(\Ff)$ to be isomorphic to $\G [1]$, where $\G$ is defined by the top row and is obtained as a unique non--split extension corresponding to $\Ext ^1(\Psi _1^{\omega _1}\otimes \Ll _{-\omega _1-2\omega _3}, \Ll _{-3\omega _1-\omega _3})={\sf k}$. Finally, the right dual to $\Ee \otimes \Ll _{-\omega _1}$ is seen to be isomorphic to

\vspace{0.2cm}

\begin{equation}
{\bf R}_{\langle \A _{0}\A _{1},\A _2\rangle}(\Ee \otimes \Ll _{-\omega _1})=
{\bf L}_{\langle {\tilde \A }_{-1},{\tilde \A}_{0}\rangle}(\Ee \otimes \Ll _{-\omega _1})\otimes \Ll _{3(\omega _1+\omega _3)} = {\tilde \G}[2-5] ={\tilde \G}[-3] .
\end{equation}

\vspace{0.2cm}

For consistency of the above calculations, one computes $\Hom ^{\bullet}({\tilde \G}, \Ee \otimes \Ll _{-\omega _1})={\sf k}[-3]$, while 
$\Hom ^{\bullet}({\tilde \G} , -)=0$ for all other exceptional generators of the decomposition $\C$, thus verifying by Proposition \ref{prop:dual_exc_coll_characterization} that the object ${\tilde \G}[-3]$ is the right dual to $\Ee \otimes \Ll _{-\omega _1}$.

\end{proof}

One thus obtains:

\begin{theorem}\label{th:F-theorem_X_4}
The bundle ${\sf F}_{\ast}\Oo _{X_4}$ decomposes into the direct sum of vector bundles with indecomposable summands being isomorphic to:

\vspace*{0.2cm}

\begin{eqnarray}\label{eq:Frobrightdualcoll_X_4}
&  \Oo _{X_4}, \quad \Ll _{-\omega _1}, \quad \Ll _{-\omega _3},  \quad   \Ll _{-2\omega _1}, \quad \Ll _{-\omega _1-\omega _3}, \quad\Ll _{-2\omega _3},\\
& (\Psi _2^{\omega _1})^{\ast}\otimes \Ll _{-\omega _1-2\omega _3}, \quad {\tilde \G}^{\ast}, \quad(\Psi _2^{\omega _3})^{\ast}\otimes \Ll _{-2\omega _1-\omega _3},
\quad \Ll _{-\omega _1-2\omega _3}, \quad \Ll _{-2\omega _1-\omega _3}, \quad \Ll _{-2(\omega _1+\omega _3)} \nonumber
\end{eqnarray}

\vspace*{0.2cm}

The multiplicity spaces at each indecomposable summand are isomorphic, respectively, to  the cohomology of the Frobenius pull--back ${\sf F}^{\ast}$ applied to the right dual of the given summand that is found from (\ref{eq:F-decomposition_on_X_4}).
\end{theorem}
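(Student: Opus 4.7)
The plan is to treat Theorem \ref{th:F-theorem_X_4} as an essentially immediate application of Theorem \ref{th:Frobdecomposclaim}(3), since all the structural data have been assembled in Lemma \ref{lem:F-collection_adjoint_variety_SL_4/B} and Proposition \ref{prop:right_dual_dec_F-collection_n=4}. First I would observe that the $6$-block exceptional collection $\C = \langle \C _{-5},\dots ,\C _0\rangle$ from (\ref{eq:F-decomposition_on_X_4}) satisfies all the hypotheses of Theorem \ref{th:Frobdecomposclaim}: fullness and semiorthogonality are proved in Lemma \ref{lem:F-collection_adjoint_variety_SL_4/B}, while the cohomology vanishing ${\rm H}^j(X_4, {\sf F}^{\ast}\Ee) = 0$ for $j \neq i$, $\Ee \in \C _{-i}$, is supplied by Claims \ref{cl:A_-3}--\ref{cl:A_-4} for the non-line-bundle generators together with routine Bott-type calculations for the remaining line bundles. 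Since $\dim X_4 = 5 = d-1$ with $d=6$, Theorem \ref{th:Frobdecomposclaim} applies verbatim.

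Next I would invoke Proposition \ref{prop:right_dual_dec_F-collection_n=4}, which identifies the right dual collection $\langle \tilde \C _0, \dots, \tilde \C _5\rangle$ as in (\ref{eq:right_dual_F-decomposition_on_X_4}). Part (3) of Theorem \ref{th:Frobdecomposclaim} then yields the decomposition
\begin{equation*}
{\sf F}_{\ast}\Oo _{X_4} \;=\; \bigoplus_{i=0}^{5}\; \bigoplus_{\substack{\Ee \in \C _{-i}\\ \G \in \tilde \C _{i}}} {\rm H}^i(X_4, {\sf F}^{\ast}\Ee) \otimes \G^{\vee},
\end{equation*}
where inside the inner sum $\G$ is the right dual partner of $\Ee$ in the sense of Proposition \ref{prop:dual_exc_coll_characterization}.

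To obtain the explicit list (\ref{eq:Frobrightdualcoll_X_4}), it then only remains to dualize the objects appearing in the blocks $\tilde \C _0,\dots,\tilde \C _5$. For line bundles this is immediate, $\Ll _\lambda^{\vee} = \Ll _{-\lambda}$, and for the twisted bundles $\Psi _2^{\omega _k}\otimes \Ll _{\mu}$ occurring in $\tilde \C _3$ the duals are $(\Psi _2^{\omega _k})^{\ast}\otimes \Ll _{-\mu}$. The sole object meriting individual attention is the rank-$4$ extension $\tilde \G$ from (\ref{eq:right_dual_E(-omega_1)}), whose dual $\tilde \G^{\ast}$ occupies the slot paired with $\Ee \otimes \Ll _{-\omega _1} \in \C _{-3}$. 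The assertion on multiplicity spaces is then a direct transcription of the displayed formula.

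The genuine content of the argument has therefore already been absorbed by the preparatory results; the main obstacle was not this final assembly, but rather the earlier verifications --- in particular, the Bott-type cohomology checks in small characteristic $p = 3$ appearing in Claims \ref{cl:A_-3}--\ref{cl:A_-4}, and the iterated-mutation identification of the non-trivial right dual object $\tilde \G$ carried out in Proposition \ref{prop:right_dual_dec_F-collection_n=4}.
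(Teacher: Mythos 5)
Your proposal is correct and follows essentially the same route as the paper: the theorem is obtained by feeding the collection (\ref{eq:F-decomposition_on_X_4}) (validated in Lemma \ref{lem:F-collection_adjoint_variety_SL_4/B} together with the cohomology claims) into Theorem \ref{th:Frobdecomposclaim}(3), identifying the right dual blocks via Proposition \ref{prop:right_dual_dec_F-collection_n=4}, and dualizing term by term to get the list (\ref{eq:Frobrightdualcoll_X_4}) with the stated multiplicity spaces. The paper likewise treats this final step as immediate assembly, with the real work residing in the preparatory lemmas you cite.
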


\subsection{Type ${\bf A}_4$}

\vspace{0.5cm}

Following the previous section, consider

$$
\xymatrix{
{\tilde \A}_{-2} & {\tilde \A}_{-1} & {\tilde \A}_0& \tilde \A & \A _0 & \A _1& \A _2&  \A _3 \\
||  & ||  & ||  & || & || & || & || & || & || & || &\\
*+[F]\txt{$\Ll _{-3\omega _1-\omega _4}$ \\ \\ $\Ll _{-2\omega _1-2\omega _4}$  \\ \\   $\Ll _{-\omega _1-3\omega _4}$} 
&*+[F]\txt{$\Ll _{-2\omega _1-\omega _4}$ \\ \\ $\Ll _{-\omega _1-2\omega _4}$}
&*+[F]\txt{$\Ll _{-\omega _1-\omega _4}$}
&*+[F]\txt{$\Ll _{-\omega _1}$  \\ \\ $\Ee \otimes \Ll _{-\omega _1}$ \\ \\ $\Lambda ^2\Ee \otimes \Ll _{-\omega _1}$ \\ \\ $\Ll _{-\omega _4} $}
&*+[F]\txt{$\Oo _{X_4}$}
&*+[F]\txt{$\Ll _{\omega _1}$ \\ \\  $\Ll _{\omega _4}$}
&*+[F]\txt{$\Ll _{2\omega _1}$  \\ \\ $\Ll _{\omega _1+\omega _4}$ \\ \\  $\Ll _{2\omega _4}$}
&*+[F]\txt{$\Ll _{3\omega _1}$  \\ \\ $\Ll _{2\omega _1+\omega _4}$ \\ \\ $\Ll _{\omega _1+2\omega _4}$ \\ \\ $\Ll _{3\omega _4}$}
} \nonumber
$$

\vspace{0.5cm}

Likewise, we consequently mutate the blocks $\A _i$ for $i=1,2,3$ to the left through the subcategory generated by $\langle \A _0,\dots, \A _{i-1}\rangle$, while mutating the blocks ${\tilde \A}_{-i}$ for $i=1,2$ to the right through 
the subcategory generated by $\langle {\tilde \A}_{-i+1},\dots, {\tilde \A} _0\rangle$ obtaining the mutated block structure:

\vspace{0.5cm}

\begin{equation}\label{eq:F-decomposition_on_X_5}
\begin{tabular}{c}
\xymatrix{
\C _{-7} & \C _{-6} & \C _{-5}& \C _{-4} & \C _{-3}& \C _{-2} & \C _{-1} & \C _0 \\
||  & ||  & ||  & || & || & || & || & || &  \\
*+[F]\txt{$\Ll _{-\omega _1-\omega _4}$} 
&*+[F]\txt{$\Phi _1^{\omega _1}$ \\ \\ 
$\Phi _1^{\omega _4}$}
&*+[F]\txt{$\Phi _2^{\omega _1}$ \\ \\ $\Phi _2^{\omega _1,\omega _4}$ \\ \\ $\Phi _2^{\omega _4}$}
&*+[F]\txt{$\Ll _{-\omega _1}$  \\ \\ $\Ee \otimes \Ll _{-\omega _1}$ \\ \\ $\Lambda ^2\Ee \otimes \Ll _{-\omega _1}$ \\ \\ $\Ll _{-\omega _4}$}
&*+[F]\txt{$\Psi _3^{\omega _1}$ \\ \\ $\Psi _3^{2\omega _1,\omega _4}$ \\ \\$\Psi _3^{\omega _1,2\omega _4}$ \\ \\ $\Psi _3^{\omega _3}$}
&*+[F]\txt{$\Psi _2^{\omega _1}$ \\ \\ $\Psi _2^{\omega _1,\omega _4}$\\ \\ $\Psi _2^{\omega _4}$}
&*+[F]\txt{$\Psi _1^{\omega _1}$ \\ \\ $\Psi _1^{\omega _4}$}
&*+[F]\txt{$\Oo _{X_4}$}
}
\end{tabular}
\end{equation}

\vspace{0.5cm}

where $\Psi _{k}^{i\omega _1,(k-i)\omega _4}$ 
for $k=1,2,3$ and $0\leq i\leq k$ 
denotes the left mutation of $\Ll _{i\omega _1+(k-i)\omega _4}$ through $\langle \A _0,\dots, \A _{k-1}\rangle$, while $\Phi _k^{i\omega _1,(k-i)\omega _4}$ for $k=1,2$ and $0\leq i\leq k$ (shortened to $\Phi _{k}^{\omega _1}$ or to $\Phi _{k}^{\omega _4}$ for $i=0$ or $i=k$)\footnote{Note that there are isomorphisms $\Psi _{k}^{k\omega _1,0}=\Psi _{k}^{\omega _1}$ and $
\Psi _{k}^{0,k\omega _4}=\Psi _{k}^{\omega _4}$ in the notation of Definition \ref{def:Psi_bundles} which justifies the notaion in (\ref{eq:F-decomposition_on_X_5}).} denotes, correspondingly, the right mutation of $\Ll _{(-k+i-1)\omega _1-(i+1)\omega _4}$ through $\langle {\tilde \A}_{-k+1},\dots, {\tilde \A}_0\rangle$
(in fact, there are isomorphisms $\Phi _k^{i\omega _1,(k-i)\omega _4}=(\Psi _i^{i\omega _1,(k-i)\omega _4})^{\ast}\otimes \Ll _{-\omega _1-\omega _4}$ for $k=1,4$ and $0\leq i\leq k$).

\vspace{0.2cm}

Similarly, one has the following 

\begin{lemma}\label{lem:F-collection_adjoint_variety_SL_5/B}
Let $p>2$. Then the collection of subcategories $\C = \langle \C _{-7},\C _{-6}, \dots ,\C _0 \rangle$ as above is a semiorthogonal decomposition of $\Dd ^b(X_5)$ satisfying the conditions of Theorem \ref{th:Frobdecomposclaim}. 
\end{lemma}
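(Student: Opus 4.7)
The plan is to follow exactly the three-step scheme employed for Lemma \ref{lem:F-collection_adjoint_variety_SL_4/B}: prove semiorthogonality, then fullness, then verify the cohomology vanishing of Frobenius pull-backs block by block. First, since $\C$ is produced from the simpler block collection $\A = \langle {\tilde \A}_{-2}, {\tilde \A}_{-1}, {\tilde \A}_0, \tilde \A, \A_0, \A_1, \A_2, \A_3 \rangle$ by a sequence of block mutations, its semiorthogonality is equivalent to that of $\A$. For the line-bundle entries the required vanishing of $\Hom^{\bullet}$ becomes a calculation of line-bundle cohomology on ${\bf SL}_5/{\bf B}$ (via the fully faithful pullback $\pi^\ast$), and the relevant weight differences lie within the regime covered by Theorems \ref{th:Kempf} and \ref{th:AndthII}. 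The bundles $\Ee \otimes \Ll_{-\omega_1}$ and $\Lambda^2 \Ee \otimes \Ll_{-\omega_1}$ in $\tilde \A$ admit filtrations analogous to (\ref{eq:seqforEonSL_4/B}) whose graded pieces are line bundles, so the required orthogonalities reduce to the same Kempf/Andersen toolkit.

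For fullness, one exploits that $X_5 \simeq \Pp(\Psi_1^{\omega_4})$ is a projective bundle of relative dimension $3$ over $\Pp(\nabla_{\omega_4})$, with the relative $\Oo(1)$ identified (up to sign) with $\Ll_{-\omega_1}$. By Orlov's Theorem \ref{th:Orvlovth}, it is enough to exhibit each of $\pi^\ast \Dd^b(\Pp(\nabla_{\omega_4})) \otimes \Ll_{-i\omega_1}$, $0\leq i\leq 3$, inside the triangulated subcategory generated by $\A$. This is built up inductively in $i$ using the Koszul-type resolutions of $\Psi_k^{\omega_4}$ (generalizing (\ref{eq:Psi _2 omega _3 Koszul res})) tensored with $\Ll_{-i\omega_1}$, the duality isomorphisms $\Psi_k^{\omega}\otimes \Ll_{-\omega}\simeq (\Psi_{r-k}^{\omega})^\ast$ which allow one to recognize the bundles $\Phi_k^{\bullet}$ appearing on the negative side of $\A$, and the pulled-back filtrations of $\Lambda^j \Ee \otimes \Ll_{-\omega_1}$ for $j=1,2$. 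The final step --- ruling out a nontrivial right-orthogonal object --- is handled by the Serre-duality argument of the $X_4$ proof, using $\omega_{X_5} = \Ll_{-4(\omega_1+\omega_4)}$.

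The main obstacle is the third step: verifying ${\rm H}^j(X_5, {\sf F}^\ast \Ee) = 0$ for $j \neq i$ for every exceptional generator $\Ee \in \C_{-i}$. For pure line bundles and for $\Psi_1^{\omega_1}, \Psi_1^{\omega_4}$ this is line-bundle cohomology plus \cite[Proposition 6.1]{FrobflagvarI}. The bundles $\Ee\otimes \Ll_{-\omega_1}$ and $\Lambda^2\Ee\otimes \Ll_{-\omega_1}$ are dealt with by the strategy of Claim \ref{cl:A_-3}: pull the defining filtrations back to ${\bf SL}_5/{\bf B}$, apply ${\sf F}^\ast$, and handle each line-bundle piece by Theorems \ref{th:Kempf}, \ref{th:AndthII} and \ref{th:Andcor}. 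The genuinely new cases are the mutated bundles $\Psi_k^{i\omega_1,(k-i)\omega_4}$ and $\Phi_k^{i\omega_1,(k-i)\omega_4}$ with $1\leq i\leq k-1$; for these I would imitate the argument for $\Psi_2^{\omega_1,\omega_3}$ in the $X_4$ proof. Namely, I write the tautological mutation resolution that defines such a bundle as a complex of direct sums of tensor products of $\Psi_1^{\omega_j}$'s and $\Ll_{\bullet}$'s, apply ${\sf F}^\ast$ and the spectral sequence for cohomology to get vanishing outside a two-step window, and then close the gap using the resolution of the diagonal supplied by Theorem \ref{th:Kuzbasechange} together with the right-dual collection (an analog of Proposition \ref{prop:right_dual_dec_F-collection_n=4}), which produces an \emph{a priori} resolution in the opposite direction. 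The anticipated difficulty is mainly combinatorial: the number of mutated objects roughly doubles compared with $X_4$, and several of the required line-bundle vanishings force one out of the range $\langle\chi,\alpha^\vee\rangle \geq -p$ of Theorem \ref{th:AndthII} and into the exceptional clause $\langle\chi,\alpha^\vee\rangle = -ap^n - 1$ of Theorem \ref{th:Andcor}. The hypothesis $p>2$ is precisely what is needed to keep every shift that appears within the usable regime.
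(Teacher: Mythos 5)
Your proposal takes essentially the same route as the paper: the paper's own proof simply declares the argument analogous to Lemma \ref{lem:F-collection_adjoint_variety_SL_4/B} and only adds the new verification that ${\rm H}^k(X_n,{\sf F}^{\ast}(\Lambda^i\Ee\otimes\Ll_{-\omega_1}))=0$ for $k\neq n-1$ and $i=1,n-3$, proved via the defining short exact sequences and the line-bundle filtration of $\pi^{\ast}\Ee\otimes\Ll_{-\omega_1}$ together with Theorems \ref{th:Kempf} and \ref{th:AndthII} --- exactly the strategy you describe for those bundles. Your more detailed plan (orthogonality reduced to the unmutated block collection, fullness via Theorem \ref{th:Orvlovth} and a Serre-duality argument with $\omega_{X_5}=\Ll_{-4(\omega_1+\omega_4)}$, and the diagonal-resolution bootstrap for the mutated $\Psi$ and $\Phi$ bundles) is precisely the $X_4$ template the paper invokes implicitly, so it is consistent with the paper's proof.
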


\begin{proof}
The proof is essentially analogous to that of Lemma \ref{lem:F-collection_adjoint_variety_SL_4/B}. The new feature is the appearance of the second exterior power of the bundle $\Ee$ whose Frobenius pull--back is supposed to have the single non--trivial cohomology in the prescribed degree (specifically, in degree $4$ in the considered case). More generally, one has:

\begin{claim}\label{cl:Lambda ^i(Ee)}
One has ${\rm H}^k(X_n,{\sf F}^{\ast}(\Lambda ^i\Ee\otimes \Ll _{-\omega _1}))=0$ for $k\neq n-1$ and $i=1,n-3$.
\end{claim}

\begin{proof}
Recall the defining short exact sequence for $\Ee\otimes \Ll _{-\omega _1}$:

\vspace{0.2cm}

\begin{equation}\label{eq:defseq_for_E(-omega_1)}
0\rightarrow \Ll _{-2\omega _1}\rightarrow \Psi _1^{\omega _{n-1}}\otimes \Ll _{-\omega _1}\rightarrow \Ee\otimes \Ll _{-\omega _1}\rightarrow 0.
\end{equation}

\vspace{0.2cm}

Considering the short exact sequence 

\vspace{0.2cm}

\begin{equation}
0\rightarrow \Psi _1^{\omega _n}\otimes \Ll _{-\omega _1}\rightarrow \nabla _{\omega _1}\otimes \Ll _{-\omega _1}\rightarrow \Ll _{-\omega _1+\omega _{n+1}}\rightarrow 0,
\end{equation}

\vspace{0.2cm}

one sees that ${\rm H}^k(X_n,{\sf F}^{\ast}(\Psi _1^{\omega _n}\otimes \Ll _{-\omega _1}))=0$ for $k\neq n-1$. Indeed, one has ${\rm H}^k(X_n,\Ll _{-p\omega _1})={\rm H}^k(\Pp (\nabla _{\omega _1}),\Ll _{-p\omega _1})=0$ for $k\neq n-1$, while 

\vspace{0.2cm}

\begin{equation}\label{eq:cohomology_-p(omega _1-omega _n-1)}
s_{\alpha _{n-2}}\cdot s_{\alpha _{n-3}}\cdot  \dots \cdot s_{\alpha _1}\cdot (-p\omega _1 + p\omega _{n-1}) = (p-n)\omega _{n-2}+(n-1)\omega _{n-1}.
\end{equation}

\vspace{0.2cm}

Hence, ${\rm H}^k(X_n,\Ll _{-p\omega _1+p\omega _{n-1}})=0$ for $k\neq n-2$, and the statement follows. From sequence (\ref{eq:defseq_for_E(-omega_1)}) one then obtains 
${\rm H}^k(X_n,{\sf F}^{\ast}(\Ee\otimes \Ll _{-\omega _1}))=0$ for $k\neq n-2,n-1$. On the other hand, the bundle $\pi ^{\ast}\Ee\otimes \Ll _{-\omega _1}$ is filtered with the set of graded factors being isomorphic to

\vspace{0.2cm}

\begin{equation}\label{eq:filtration_on_E(-omega _1)}
\Ll _{-\omega _2},\Ll _{-\omega _1+\omega _2-\omega _3},\dots ,\Ll _{-\omega _1+\omega _{n-3}-\omega _{n-2}} .
\end{equation}

\vspace{0.2cm}

Using Theorems \ref{th:AndthII} and \ref{th:Kempf}, one checks that ${\rm H}^{n-2}(X_n,\Ll _{p\lambda})=0$, where $\lambda$ is any weight from the set (\ref{eq:filtration_on_E(-omega _1)}), and thus Claim \ref{cl:Lambda ^i(Ee)} for $i=1$ follows.

The case $i=n-3$ is similar to the above. One has $\Lambda ^{n-3}\Ee \otimes \Ll _{-\omega _1}=\Ee ^{\ast}\otimes {\rm det}(\Ee)\otimes \Ll _{-\omega _1}=\Ee ^{\ast}\otimes \Ll _{-\omega _{n-1}}$, since ${\rm det}(\Ee)=\Ll _{\omega _1-\omega _{n-1}}$. Consider the short exact sequence 

\vspace{0.2cm}

\begin{equation}
0\rightarrow \Ee ^{\ast}\otimes \Ll _{-\omega _{n-1}}\rightarrow (\Psi _1^{\omega _{n-1}})^{\ast}\otimes \Ll _{-\omega _{n-1}}\rightarrow \Ll _{\omega _1-\omega _{n+1}}\rightarrow 0,
\end{equation}

\vspace{0.2cm}

One obtains ${\rm H}^k(X_n,{\sf F}^{\ast}((\Psi _1^{\omega _{n-1}})^{\ast}\otimes \Ll _{-\omega _{n-1}}))=0$ for $k\neq n-2,n-1$, while (cf. (\ref{eq:cohomology_-p(omega _1-omega _n-1)}))
${\rm H}^k(X_n,\Ll _{p(\omega _1-\omega _{n+1})})=0$ for $k\neq n-2$. Thus, 
${\rm H}^k(X_n,{\sf F}^{\ast}(\Ee ^{\ast}\otimes \Ll _{-\omega _{n-1}}))=0$ 
 for $k\neq n-2,n-1$. Arguing as in (\ref{eq:filtration_on_E(-omega _1)}), one ensures that 
 ${\rm H}^{n-2}(X_n,{\sf F}^{\ast}(\Ee ^{\ast}\otimes \Ll _{-\omega _{n-1}}))=0$, hence the statement.
  
\end{proof}

\end{proof}

\subsubsection{The right dual collection}
The terms of right dual collection to (\ref{eq:F-decomposition_on_X_5}) are calculated as in Proposition \ref{prop:right_dual_dec_F-collection_n=4}; most of these can be obtained immediately from the construction. As for the bundles $\Ee \otimes \Ll _{-\omega _1}$ and $\Lambda ^2(\Ee \otimes \Ll _{-\omega _1})$, repeating the calculation from Proposition \ref{prop:right_dual_dec_F-collection_n=4}, one obtains: 

\vspace{0.2cm}

\begin{equation}
{\bf L}_{\langle {\tilde \A}_{-1} ,{\tilde \A}_0 \rangle}(\Ee \otimes \Ll _{-\omega _1}) = \G ,
\end{equation}

\vspace{0.2cm}

where $\G$ is obtained as a unique non--trivial extension

\vspace{0.2cm}

\begin{equation}
0\rightarrow \Ll _{-3\omega _1-\omega _4}\rightarrow \G \rightarrow \Psi _1^{\omega _1}\otimes \Ll _{-\omega _1-2\omega _4} \rightarrow 0.
\end{equation}

\vspace{0.2cm}

This sequence immediately gives that the left mutation ${\bf L}_{\langle \Ll _{-3\omega _1-\omega _3}\rangle}\G$ is isomorphic to $\Psi _1^{\omega _1}\otimes \Ll _{-\omega _1-2\omega _4}$. Further, $\Hom ^{\bullet}(\Ll _{-2\omega _1-2\omega _4},\Psi _1^{\omega _1}\otimes \Ll _{-\omega _1-2\omega _4})=\nabla _{\omega _2}$, and one obtains 

\vspace{0.2cm}

\begin{equation}
0\rightarrow \Psi _2^{\omega _1}\otimes \Ll _{-2\omega _1-2\omega _4}\rightarrow \nabla _{\omega _2}\otimes \Ll _{-2\omega _1-2\omega _4}\rightarrow \Psi _1^{\omega _1}\otimes \Ll _{-\omega _1-2\omega _4}\rightarrow 0;
\end{equation}

\vspace{0.2cm}

that is, ${\bf L}_{\langle \Ll _{-2\omega _1-2\omega _4}\rangle}(\Psi _1^{\omega _1}\otimes \Ll _{-\omega _1-2\omega _4})=\Psi _2^{\omega _1}\otimes \Ll _{-2\omega _1-2\omega _4}[1]$. Finally, one calculates the group $\Hom ^{\bullet}(\Ll _{-\omega _1-3\omega _4},\Psi _2^{\omega _1}\otimes \Ll _{-2\omega _1-2\omega _4}[1])={\sf k}$, and thus the left mutation ${\bf L}_{\langle \Ll _{-\omega _1-3\omega _4}\rangle}(\Psi _2^{\omega _1}\otimes \Ll _{-2\omega _1-2\omega _4}[1])$ = ${\bf L}_{{\langle {\tilde \A}_{-2},\tilde \A}_{-1} ,{\tilde \A}_0 \rangle}(\Ee \otimes \Ll _{-\omega _1})
$ is given by a unique non--trivial extension

\vspace{0.2cm}

\begin{equation}
0\rightarrow \Psi _2^{\omega _1}\otimes \Ll _{-2\omega _1-2\omega _4}\rightarrow {\mathcal H} \rightarrow \Ll _{-\omega _1-3\omega _4}\rightarrow 0.
\end{equation}

\vspace{0.2cm}

Denote  ${\tilde \Hh} =\Hh \otimes \omega _{X_5}^{-1}=\Hh \otimes \Ll _{4(\omega _1+\omega _4)}$. Finally, by Lemma \ref{lem:mutations_canonical_class}, one has 

\vspace{0.2cm}

\begin{equation}
{\bf R}_{\langle \A _{0},\A _{1},\A _2, \A _3\rangle}(\Ee \otimes \Ll _{-\omega _1})={\bf L}_{\langle {\tilde \A}_{-2},{\tilde \A}_{-1},{\tilde \A}_{0}\rangle}(\Ee \otimes \Ll _{-\omega _1})\otimes \omega _{X_5}^{-1} = {\tilde \Hh}[3-7] = {\tilde \Hh}[-4] .
\end{equation}

\vspace{0.2cm}

Observe that $\Lambda ^2\Ee \otimes \Ll _{-\omega _1} = \Ee ^{\ast}\otimes \Ll _{-\omega _4}$, and that the bundles $\Ee ^{\ast}\otimes \Ll _{-\omega _4}$ and $\Ee ^{\ast}\otimes \Ll _{-\omega _4}$ are interchanged under the automorphism of the Dynkin diagram ${\bf A}_4$. Thus, the right dual bundle of $\Lambda ^2\Ee \otimes \Ll _{-\omega _1}$ is isomorphic to a unique non--trivial extension

\vspace{0.2cm}

\begin{equation}
0\rightarrow \Psi _2^{\omega _4}\otimes \Ll _{2\omega _1+2\omega _4}\rightarrow {\tilde \Kk} \rightarrow \Ll _{\omega _1+3\omega _4}\rightarrow 0.
\end{equation}

\vspace{0.2cm}

Similarly to Theorem \ref{th:F-theorem_X_4}, one obtains:

\begin{theorem}\label{th:F-theorem_X_5}
The bundle ${\sf F}_{\ast}\Oo _{X_5}$ decomposes into the direct sum of vector bundles with indecomposable summands being isomorphic to:

\vspace*{0.2cm}

\begin{eqnarray}\label{eq:Frobrightdualcoll_X_4}
&  \Oo _{X_5}, \quad \Ll _{-\omega _1}, \quad \Ll _{-\omega _4},  \quad \Ll _{-2\omega _1}, \quad \Ll _{-\omega _1-\omega _4}, \quad \Ll _{-2\omega _4},
\quad   \Ll _{-3\omega _1}, \quad \Ll _{-2\omega _1-\omega _4}, \quad 
\Ll _{-\omega _1-2\omega _4}, \quad \Ll _{-3\omega _4}, \nonumber \\
& (\Psi _3^{\omega _1})^{\ast}\otimes \Ll _{-\omega _1-3\omega _4}, \quad {\tilde \Hh}^{\ast}, \quad {\tilde \Kk}^{\ast}, \quad 
(\Psi _3^{\omega _1})^{\ast}\otimes \Ll _{-3\omega _1-\omega _4},\nonumber \\
& \Ll _{-\omega _1-3\omega _4}, \quad \Ll _{-2\omega _1-2\omega _4}, \quad
 \Ll _{-3\omega _1-\omega _4}, \quad \Ll _{-2\omega _1-3\omega _4}, 
\quad  \Ll _{-3\omega _1-2\omega _4}, \quad \Ll _{-3(\omega _1+\omega _4)} \nonumber
\end{eqnarray}

\vspace*{0.2cm}

The multiplicity spaces at each indecomposable summand are isomorphic, respectively, to  the cohomology of ${\sf F}^{\ast}$ of the corresponding terms of (\ref{eq:F-decomposition_on_X_5}).
\end{theorem}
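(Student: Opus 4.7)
The plan is to deduce Theorem \ref{th:F-theorem_X_5} as a direct application of Theorem \ref{th:Frobdecomposclaim} to the $8$-block exceptional decomposition (\ref{eq:F-decomposition_on_X_5}), once the latter is shown to satisfy the hypotheses of that theorem. This is exactly the strategy followed in the proof of Theorem \ref{th:F-theorem_X_4}, so the argument is structural rather than requiring any new conceptual input.

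First I would invoke Lemma \ref{lem:F-collection_adjoint_variety_SL_5/B}, which provides the semiorthogonal decomposition $\C = \langle \C_{-7},\dots,\C_0\rangle$ together with the vanishing ${\rm H}^j(X_5,{\sf F}^{\ast}\Ee) = 0$ for $j\neq -i$ whenever $\Ee \in \C_{-i}$. This is precisely what is needed to trigger Theorem \ref{th:Frobdecomposclaim}. The right dual collection to (\ref{eq:F-decomposition_on_X_5}) has been explicitly assembled in the preceding subsection: its generators are the various line bundles $\Ll_{-i\omega_1-j\omega_4}$ with $0\le i,j\le 3$, the shifted duals $(\Psi_3^{\omega_1})^{\ast}\otimes \Ll_{-\omega_1-3\omega_4}$ and $(\Psi_3^{\omega_4})^{\ast}\otimes \Ll_{-3\omega_1-\omega_4}$, and the two nontrivial extensions ${\tilde \Hh}$ and ${\tilde \Kk}$ obtained as right duals of $\Ee\otimes \Ll_{-\omega_1}$ and $\Lambda^2\Ee\otimes \Ll_{-\omega_1}$ respectively. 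Part (4) of Theorem \ref{th:Frobdecomposclaim} guarantees that these right dual objects are, up to shift, honest vector bundles.

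Next, I would read off the indecomposable summands of ${\sf F}_{\ast}\Oo_{X_5}$ from the formula (\ref{eq:Decomposition_of_F_*Oo_X}): they are the duals $\G^{\vee}$ of the right dual generators $\G$, which produces exactly the list displayed in the theorem. The multiplicity spaces ${\rm H}^i(X_5,{\sf F}^{\ast}\Ee)$ for $\Ee \in \C_{-i}$ then give the stated identification via part (5) of Theorem \ref{th:Frobdecomposclaim}; the invariance under the Dynkin diagram automorphism swapping $\omega_1\leftrightarrow \omega_4$ serves as a consistency check (pairing $\Ll_{-i\omega_1-j\omega_4}$ with $\Ll_{-j\omega_1-i\omega_4}$ and ${\tilde \Hh}^{\ast}$ with ${\tilde \Kk}^{\ast}$).

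The main obstacle, which is the real content of Lemma \ref{lem:F-collection_adjoint_variety_SL_5/B}, is verifying the Frobenius pull-back vanishing conditions for the nontrivial exceptional objects, especially the second exterior power $\Lambda^2\Ee\otimes \Ll_{-\omega_1}$ (handled by Claim \ref{cl:Lambda ^i(Ee)}), together with the mutated bundles $\Psi_k^{i\omega_1,(k-i)\omega_4}$ and $\Phi_k^{i\omega_1,(k-i)\omega_4}$. These are treated analogously to Claims \ref{cl:A_-3} and \ref{cl:A_-4} in the $X_4$ case: one uses the defining short exact sequences (obtained from the Koszul-type resolutions (\ref{eq:Psi _2 omega _1 Koszul res}) and (\ref{eq:Psi _2 omega _3 Koszul res}), the sequences (\ref{eq:Psi_omega_1})--(\ref{eq:defseqforE}), and their $X_5$-analogues built from $\Ee$ and its exterior powers) combined with Theorems \ref{th:Kempf}, \ref{th:AndthII} and \ref{th:Andcor} to reduce each cohomology computation to standard manipulations with dot-shifted weights on ${\bf SL}_5/{\bf B}$. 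Once these vanishings are in place, the theorem follows at once from Theorem \ref{th:Frobdecomposclaim}.
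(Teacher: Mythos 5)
Your proposal is correct and follows essentially the same route as the paper: Theorem \ref{th:F-theorem_X_5} is deduced, exactly as Theorem \ref{th:F-theorem_X_4} was, by feeding the decomposition (\ref{eq:F-decomposition_on_X_5}) — via Lemma \ref{lem:F-collection_adjoint_variety_SL_5/B}, Claim \ref{cl:Lambda ^i(Ee)}, and the computation of the right dual collection (in particular of ${\tilde \Hh}$ and ${\tilde \Kk}$) — into Theorem \ref{th:Frobdecomposclaim}. The only slip is notational: the right dual generators are the positively twisted objects (e.g. $\Psi _3^{\omega _1}\otimes \Ll _{\omega _1+3\omega _4}$, ${\tilde \Hh}$, ${\tilde \Kk}$, and the positive line bundles), whose duals $\G ^{\vee}$ then give the listed summands, which is in fact how you use them.
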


\vspace*{0.5cm}

\section{The general case}\label{sec:adjoint_var}

\vspace{0.5cm}

Given the results of Section \ref{sec:adjoint_var_rank3}, one naturally arrives at the following conjecture:

\begin{conjecture}\label{conj:Frobenius_adjoint_type_A}
Let $n\in \mathbb N$. Consider the following collection of subcategories ${\tilde \A},\B, \A$ of $\Dd ^b(X_n)$, where 

\vspace{0.2cm}

\begin{equation}
\A = \langle \A _{k}\rangle, \quad 0 < k\leq n-1 ,
\end{equation}

\vspace{0.2cm}

where $\A _{0}=\langle \Oo _{X_n}\rangle$, and $\A _{k}$ for $k<0$ is defined inductively as the left mutation of the subcategory generated by $\Ll _{i\omega _1+(k-i)\omega _{n-1}}$ for $0\leq i\leq k$ through the subcategory generated by $\A _{l}$ for $0\leq l <k$. The subcategory $\B$ is generated by an exceptional collection 

\vspace{0.2cm}

\begin{equation}\label{eq:block_B}
\B = \langle \Lambda ^i\Ee\otimes \Ll _{-\omega _1}\rangle, \quad 0\leq i\leq n-2.
\end{equation}

\vspace{0.2cm}

Finally, the subcategory ${\tilde \A}$ is defined to be 
\vspace{0.2cm}

\begin{equation}
{\tilde \A} = \langle {\tilde \A} _k\rangle, \quad -n +2 < k\leq 0,
\end{equation}

\vspace{0.2cm}

where ${\tilde \A} _0=\langle \Ll _{-\omega _1-\omega _n}\rangle$, and ${\tilde \A} _k$
for $k<0$ is defined inductively as the right mutation of the subcategory generated by $\Ll _{(-k+i)\omega _1-i\omega _{n-1}}\otimes \Ll _{-\omega _1-\omega _n}$ for $0\leq i\leq -k$ through the subcategory generated by ${\tilde \A}_l$ for $k< l\leq 0$.

Then the collection of subcategories $\langle {\tilde \A},\B, \A\rangle$ is a semiorthogonal decomposition of $\Dd ^b(X_n)$ that satisfies the conditions of Theorem \ref{th:Frobdecomposclaim}. The set of indecomposable summands of ${\sf F}_{\ast}\Oo _{X_n}$ consists of the terms of the right dual decomposition to 
$\langle {\tilde \A},\B, \A\rangle$.

\end{conjecture}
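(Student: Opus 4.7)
The plan is to verify the three hypotheses of Theorem \ref{th:Frobdecomposclaim} for the collection $\langle {\tilde \A},\B, \A\rangle$: semiorthogonality and fullness in $\Dd ^b(X_n)$, the $d$--block structure in the sense of Definition \ref{def:d-block_collection}, and the single--degree cohomology vanishing ${\rm H}^j(X_n,{\sf F}^{\ast}\Ee')=0$ for $j\neq k$ whenever $\Ee'\in \C _{-k}$. Once these are in hand, Theorem \ref{th:Frobdecomposclaim} immediately yields the decomposition of ${\sf F}_{\ast}\Oo _{X_n}$: the indecomposable summands are the duals (and shifts) of the terms of the right dual collection, with multiplicity spaces given by the Frobenius cohomology of the corresponding terms of $\langle {\tilde \A},\B, \A\rangle$.

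I would first establish semiorthogonality. By Lemma \ref{lem:mutations_of_completely_orthogonal_subcategories} and the behavior of block mutations reviewed in Section \ref{sec:SOD_mutations}, semiorthogonality of $\langle {\tilde \A},\B, \A\rangle$ is equivalent to that of the unmutated block collection whose generators are the line bundles $\Ll _{i\omega _1+(k-i)\omega _{n-1}}$ together with the bundles $\Lambda ^i\Ee \otimes \Ll _{-\omega _1}$. All required $\Hom ^{\bullet}$--vanishings reduce to line bundle cohomology with small weight pairings, controlled by Kempf's vanishing (Theorem \ref{th:Kempf}) together with the bottom--alcove relations of Theorems \ref{th:AndthII} and \ref{th:Andcor}; the orthogonalities involving $\Lambda ^i\Ee \otimes \Ll _{-\omega _1}$ are reduced further, via the pull--back filtration of $\pi ^{\ast}(\Lambda ^i\Ee)$ on ${\bf SL}_n/{\bf B}$ generalizing (\ref{eq:seqforEonSL_4/B}), to the same kind of line bundle computations. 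For fullness I would exploit the projective bundle structure $X_n\simeq \Pp (\Psi _1^{\omega _{n-1}})\to \Pp (\nabla _{\omega _{n-1}})$ and apply Theorem \ref{th:Orvlovth}, reducing to showing that $\Dd ^b(\Pp (\nabla _{\omega _{n-1}}))\otimes \Ll _{-j\omega _1}$ lies in the generated subcategory for $0\leq j\leq n-2$. This I would prove by induction on $j$, using the Koszul resolutions of $\Psi _k^{\omega _1}$ (as in (\ref{eq:Psi _2 omega _1 Koszul res})) and $\Psi _k^{\omega _{n-1}}$ (as in (\ref{eq:Psi _2 omega _3 Koszul res})), together with the sequences (\ref{eq:defseqforE})--(\ref{eq:2nd presentation for E}) and their exterior powers, to propagate line bundles from one twist to the next; the argument closes by observing that any nontrivial object in the right orthogonal must be a complex of copies of $\Ll _{-(n-1)(\omega _1+\omega _{n-1})}$, which is ruled out by computing $\Hom ^{\bullet}(\Psi _1^{\omega _1+\omega _{n-1}},-)$ using $\omega _{X_n}=\Ll _{-(n-1)(\omega _1+\omega _{n-1})}$, exactly as in the final paragraph of the proof of Lemma \ref{lem:F-collection_adjoint_variety_SL_4/B}.

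The main obstacle is the Frobenius cohomology vanishing for the middle blocks, and most delicately for the bundles $\Lambda ^i\Ee \otimes \Ll _{-\omega _1}$ with $1<i<n-3$ in $\C _{-(n-1)}$, since Claim \ref{cl:Lambda ^i(Ee)} only treats the extremal values $i=1$ and $i=n-3$. My strategy would be to combine the Koszul complex for $\Lambda ^i\Ee$ arising from (\ref{eq:defseqforE}) with the pull--back filtration of $\pi ^{\ast}(\Lambda ^i\Ee \otimes \Ll _{-\omega _1})$ on ${\bf SL}_n/{\bf B}$, whose graded pieces are explicit line bundles of the form $\Ll _{-\omega _1+\omega _s-\omega _{s+1}}$, and verify via Theorems \ref{th:Kempf}--\ref{th:Andcor} that each Frobenius pull--back contributes cohomology only in degree $n-1$; for the intermediate blocks $\C _{-k}$ with $1<k<n-1$, the same approach applies to the mutated objects $\Psi _k^{i\omega _1,(k-i)\omega _{n-1}}$ and $\Phi _k^{i\omega _1,(k-i)\omega _{n-1}}$ via their defining Koszul--type resolutions. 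The inductive bookkeeping of dot--action shifts is the most delicate point and may force a lower bound on $p$ analogous to $p>2$ in the $n=4,5$ cases.

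Finally, the right dual collection would be computed block by block as in Proposition \ref{prop:right_dual_dec_F-collection_n=4}: the line bundle terms in $\A$ dualize to their Serre--twists through Lemma \ref{lem:mutations_canonical_class}, the $\Psi$--bundles dualize to their duals tensored with appropriate line bundles, and the bundles $\Lambda ^i\Ee \otimes \Ll _{-\omega _1}$ dualize to iterated non--split extensions generalizing ${\tilde \G}$, ${\tilde \Hh}$, ${\tilde \Kk}$ of Sections \ref{sec:adjoint_var_rank3}. The uniqueness of these extensions would be secured by one--dimensional $\Ext ^1$ computations exactly as in the rank $4,5$ cases, and consistency would be verified against the biorthogonality criterion of Proposition \ref{prop:dual_exc_coll_characterization}, completing the identification of the summands of ${\sf F}_{\ast}\Oo _{X_n}$.
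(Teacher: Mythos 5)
You should note first that the statement you are proving is stated in the paper as a \emph{conjecture}: the paper offers no proof of it, and explicitly says that, given the rank $4$ and $5$ computations, proving it ``essentially reduces to'' the vanishing ${\rm H}^k(X_n,{\sf F}^{\ast}(\Lambda ^i\Ee\otimes \Ll _{-\omega _1}))=0$ for $k\neq n-1$ and all $0\leq i\leq n-2$, which is established only for the extremal cases ($i=0,n-2$ are line bundles, and $i=1,n-3$ are Claim \ref{cl:Lambda ^i(Ee)}). Your proposal correctly identifies this as the main obstacle, but it does not overcome it: the sentence asserting that one can ``verify via Theorems \ref{th:Kempf}--\ref{th:Andcor} that each Frobenius pull--back contributes cohomology only in degree $n-1$'' is precisely the open problem restated, not an argument. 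Moreover, the filtration you invoke is misdescribed: the graded pieces of $\pi ^{\ast}(\Lambda ^i\Ee\otimes \Ll _{-\omega _1})$ on ${\bf SL}_n/{\bf B}$ are line bundles attached to sums of $i$ distinct weights $\omega _{j-1}-\omega _j$ (cf.\ (\ref{eq:seqforEonSL_4/B})), and only for $i=1$ do they take the form listed in (\ref{eq:filtration_on_E(-omega _1)}); for $1<i<n-3$ the relevant weights $p\lambda$ are not all confined to a single cohomological degree by Kempf/Andersen bookkeeping alone, so the filtration argument, as in the known cases, can at best confine the cohomology to a range of degrees rather than to the single degree $n-1$.

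There is a second, related gap. Even in the proven case $n=4$, the single--degree vanishing for the mutated bundle $\Psi _2^{\omega _1,\omega _3}$ was \emph{not} obtained from its Koszul--type resolution alone: that resolution only gives vanishing outside degrees $1$ and $2$, and the paper kills the remaining degree by a non--formal bootstrap through Proposition \ref{prop:right_dual_dec_F-collection_n=4} and the resolution of the diagonal furnished by Theorem \ref{th:Kuzbasechange} (see the proof inside Lemma \ref{lem:F-collection_adjoint_variety_SL_4/B}), i.e.\ by using the right dual collection whose existence is being established at the same time. Your plan treats the intermediate blocks $\Psi _k^{i\omega _1,(k-i)\omega _{n-1}}$ and $\Phi _k^{i\omega _1,(k-i)\omega _{n-1}}$ by ``their defining Koszul--type resolutions'' only, so it neither reproduces this extra step nor explains how the circularity (using the dual decomposition to prove the vanishing that Theorem \ref{th:Frobdecomposclaim} needs in order to produce that decomposition) is to be organized for general $n$. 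The remaining parts of your outline --- semiorthogonality, fullness via Theorem \ref{th:Orvlovth} and induction on the twist, and the computation of the right dual terms as iterated non--split extensions generalizing ${\tilde \G},{\tilde \Hh},{\tilde \Kk}$ --- do follow the paper's method for $n=4,5$ and are plausible, but as it stands the proposal is a programme matching the paper's own expectations rather than a proof; the decisive cohomological input for $1<i<n-3$ and for the intermediate mutated blocks is missing, which is exactly why the statement remains a conjecture.
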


Given the above considerations, proving the conjecture essentially reduces to the following statement: ${\rm H}^k(X_n,{\sf F}^{\ast}(\Lambda ^i\Ee\otimes \Ll _{-\omega _1}))=0$ for $k\neq n-1$ and $0\leq i\leq n-2$ (that is, the cohomology of Frobenius pull--backs of the bundles from the block $\B$ in (\ref{eq:block_B}) can be non--trivial only in the prescribed degree).

\vspace{0.5cm}

\end{document}